\documentclass[12pt,a4paper]{amsart}
\linespread{1.09}

\usepackage{amscd}
\usepackage{amsmath}
\usepackage{latexsym}
\usepackage{amsfonts}
\usepackage{amssymb}
\usepackage{amsthm}
\usepackage{graphicx}
\usepackage{verbatim}
\usepackage{mathrsfs}
\usepackage{enumerate}
\usepackage{color}

\setlength{\oddsidemargin}{0cm}
\setlength{\evensidemargin}{0cm}
\setlength{\topmargin}{0cm}
\setlength{\textheight}{23cm}
\setlength{\textwidth}{16cm}

\newtheorem{definition}{Definition}[section]
\newtheorem{theorem}{Theorem}[section]
\newtheorem{lemma}[theorem]{Lemma}

\newtheorem{proposition}[theorem]{Proposition}

\theoremstyle{remark}
\newtheorem{remark}[theorem]{Remark}

\newcommand{\RR}{\mathbb{R}}

\allowdisplaybreaks
\begin{document}
\title[well-posedness of the  Oldroyd-B model without damping mechanism ]{Global well-posedness in the  critical Besov spaces for the
incompressible Oldroyd-B model without damping mechanism }

\author[]{Qionglei Chen}
\address[]{Institute of Applied Physics and Computational Mathematics, Beijing 100088, China}
\email{chen\_qionglei@iapcm.ac.cn}

\author[]{Xiaonan Hao}
\address[]{The Graduate School of China Academy of Engineering Physics, Beijing 100088, China}
\email{xn\_hao@163.com}

\date{\today}
\subjclass[2000]{35Q30, 76D05; 35B40.} \keywords{incompressible
Oldroyd-B model, critical spaces, global solution.}

\begin{abstract}

We prove the global well-posedness in the critical Besov spaces for
the incompressible Oldroyd-B model without damping mechanism on the
stress tensor in $\mathbb{R}^d$ for the small initial data. Our
proof is based on the observation that the  behaviors of Green's
matrix to the system of
$\big(u,(-\Delta)^{-\frac12}\mathbb{P}\nabla\cdot\tau\big)$  as well
as the effects of $\tau$ change from the low frequencies to the high
frequencies and the construction of the appropriate energies in
different frequencies.
\end{abstract}

\maketitle
\section{Introduction}\label{INTR}
\setcounter{section}{1}\setcounter{equation}{0}

We are concerned with the  incompressible Oldroyd-B model  of the non-Newtonian fluid
in $\mathbb{R}^+\times\mathbb{R}^d$
\begin{equation}\label{OBA}
\begin{cases}
u_t+u\cdot\nabla u-\nu\Delta u+\nabla p=\mu_1\nabla\cdot\tau,\\
\tau_t+u\cdot\nabla\tau+a\tau+Q(\tau,\nabla u)=\mu_2D(u),  \\
\nabla\cdot u=0,\\
u(0,x)=u_0(x),\ \ \tau(0,x)=\tau_0(x).
\end{cases}
\end{equation}
Here $u(t,x)$ stands for the velocity fluid and $\tau(t,x)$ is the
non-Newtonian part of stress tensor ($\tau$ is a $d\times d$
symmetric matrix and $[\nabla\cdot\tau]^i=\sum_j\partial_j\tau^{i,j}$). The pressure $p$ is a scalar and coefficients
$\nu,a,\mu_1,\mu_2$ are assumed to be non-negative constants. The
bilinear term $Q$ has the following form
$$Q(\tau,\nabla u)=\tau W(u)-W(u)\tau-b\big(D(u)\tau+\tau D(u)\big),$$
where $b\in[-1,1]$, and $D(u)=\frac{1}{2}\big(\nabla u+(\nabla
u)^\top\big)$, $W(u)=\frac{1}{2}\big(\nabla u-(\nabla u)^\top\big)$ are the
deformation tensor and the vorticity tensor, respectively. If $a=0$,
we call the system \eqref{OBA} the Oldroyd-B model without damping
mechanism which we investigate in this paper. The Oldroyd-B model is a typical prototypical model for
viscoelastic fluids, which describes the motion of some viscoelastic
flows. For more detailed physical background and derivations about this
model, one refers to \cite{R.B.Bird,J.-Y.Chemin,J.G. Oldroyd}.

The well-posedness of the system \eqref{OBA} had been studied
extensively. In the the case of $a>0$, Guillop\'{e} and Saut
\cite{C.G1} proved that the strong solutions are local well-posed in the
Sobolev space $H^s$. They \cite{C.G2} also showed that these
solutions are global if the coupling parameter and the initial data
are small enough. Their results were extended to the $L^p$
framework by Fernandez-Cara, Guill\'{e}n and Ortega
\cite{E.Fernandez}. Chemin and Masmoudi \cite{J.-Y.Chemin} initiated
the study of the global existence and uniqueness in the critical
Besov spaces, and their results were improved later by Zi, Fang and Zhang
\cite{R.Z.Zi} to the case of the non-small coupling parameter. For
more  results on the well-posedness and the blow-up criterion, one
refers to \cite{J.-Y.Chemin,Miao,R.Z.Zi,D.Y.Fang,Z.Lei} and
references therein.

Now let us say a few words about the so-called critical spaces, for
the incompressible Navier-Stokes equations
\begin{equation*}(\textrm{INS})\qquad
\begin{cases}
u_t+u\cdot\nabla u-\nu\Delta u+\nabla p=0,\\
\nabla\cdot u=0,\\
u(0,x)=u_0(x),
\end{cases}
\end{equation*}
if $(u(t, x), p(t, x))$ is a solution of (INS), then for $\lambda>0$
\begin{equation}\label{scaling of INS}\big(u_\lambda(t, x),p_\lambda(t, x)\big)\triangleq\big(\lambda u(\lambda^2t,
\lambda x), \lambda^2 p(\lambda^2t,
\lambda x)\big),
\end{equation}
is also a solution of (INS). Moreover, the functional space $X$ is
called critical to the system (INS) if the corresponding norm is
invariant under the scaling \eqref{scaling of INS}. It is Obvious
that $\dot{H}^{\frac d2-1}$ is a critical space. Fujita and Kato
\cite{Fujita-Kato} proved the wellposedness of (INS) in
$\dot{H}^{\frac d2-1}$, see also \cite{Can1,Can2,Can3,Can4} and
references therein for the other critical spaces. Although the
system \eqref{OBA} does not have any scaling invariance, one may
find that if the coupling term $\nabla\cdot\tau$ as well as the
damping term $\tau$ is neglected, and $(u,\tau)$ is a solution of
\eqref{OBA}, then for $\lambda>0$
\begin{equation}\label{scaling}
\big(u_\lambda(t,x),\tau_\lambda(t,x),p_\lambda(t,x)\big)\triangleq\big(\lambda
u(\lambda^2 t,\lambda x),\tau(\lambda^2 t,\lambda x),
\lambda^2p(\lambda^2 t,\lambda x)\big)
\end{equation}
is also a solution of \eqref{OBA}. This leads us to define the
following critical spaces of the system \eqref{OBA} as in
\cite{J.-Y.Chemin,R.Z.Zi},
\begin{definition}\label{criticalspace}
A functional space is called critical to \eqref{OBA} if the associated norm is invariant
under the transformation $(u,\tau)\rightarrow(u_\lambda, \tau_\lambda)$
(up to a constant independent of $\lambda$).
\end{definition}
\noindent It is obvious that $\dot{H}^{\frac d2}\times\dot{H}^{\frac
d2-1}$ and $\dot{B}^{\frac dp}_{p,q}\times\dot{B}^{\frac
dp-1}_{p,q}$ are critical spaces. And the reason why one has to
consider the well-posedness in such spaces has been fully explained
in \cite{J.-Y.Chemin}.

As for the researches of other special cases of the system \eqref{OBA}, we sketch some results here. If
$b=0$, the existence of the global weak solution had been proved by
Lions and Masmoudi \cite{P.-L.Lions}. If $\mu=0$ and the
equation of $\tau$ contains viscous term $-\Delta \tau$, the global
existence of small smooth solutions had been proved by Elgindi,
Rousset and Liu \cite{T.M.F,T.M.J}, and the similar result with
general data had also been showed for more special models in
\cite{T.M.F}. When $a=0$, Zhu Yi \cite{zhuyi} constructed
a global smooth solution in $3D$ very recently. More precisely,
\begin{theorem}\cite{zhuyi}\label{zhuyi}
Let $\nu,\mu_1,\mu_2>0$ and $a=0$. Suppose that $\nabla\cdot u=0,\
(\tau_0)^{i,j}=(\tau_0)^{j,i}$ and initial data $\Lambda^{-1}u_0,\
\Lambda^{-1}\tau_0\in H^3(\mathbb{R}^3)$. Then there exists a small
constant $\epsilon$ such that system \eqref{OBA} admits a unique
global classical solution provided that
$$\|\Lambda^{-1}u_0\|_{H^3}+\|\Lambda^{-1}\tau_0\|_{H^3}\leq\epsilon,$$
where $\Lambda^{-1}=(-\Delta)^{\frac{1}{2}}.$
\end{theorem}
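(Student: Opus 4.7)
The plan is to work at the level of the normalized variables $U:=\Lambda^{-1}u$ and $T:=\Lambda^{-1}\tau$, so that the hypothesis places $(U_0,T_0)$ in $H^3(\mathbb{R}^3)$. Since $\Lambda^{-1}$ commutes with $\nabla\cdot$, $D$ and the Leray projector $\mathbb{P}$, applying it to \eqref{OBA} yields the equivalent system
\begin{equation*}
U_t-\nu\Delta U+\Lambda^{-1}\nabla p=\mu_1\nabla\cdot T+N_1,\qquad T_t=\mu_2 D(U)+N_2,\qquad \nabla\cdot U=0,
\end{equation*}
with nonlinear sources $N_1=-\Lambda^{-1}(u\cdot\nabla u)$ and $N_2=-\Lambda^{-1}(u\cdot\nabla\tau+Q(\tau,\nabla u))$. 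This is a symmetric hyperbolic--parabolic coupling of Matsumura--Nishida type: $U$ is dissipated by $\nu\Delta$, while $T$ has no damping and inherits decay only through its interaction with $U$. A Fourier inspection of the linear part confirms the obstruction, since the characteristic polynomial $\lambda^2+\nu|\xi|^2\lambda+\tfrac{\mu_1\mu_2}{2}|\xi|^2$ gives damped oscillations with rate $\sim|\xi|^2$ at low frequencies but only an $O(1)$ rate for the $T$-mode at high frequencies.

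First I would run the standard $H^3$ energy estimate on $(U,T)$. Pairing $\partial^\alpha U$ and $\partial^\alpha T$ against the two equations for $|\alpha|\le 3$, using the symmetry of $T$ (so $\nabla U:T=D(U):T$) and $\nabla\cdot U=0$, the coupling terms cancel and produce
\begin{equation*}
\tfrac12\tfrac{d}{dt}\bigl(\mu_2\|U\|_{H^3}^2+2\mu_1\|T\|_{H^3}^2\bigr)+\nu\mu_2\|\nabla U\|_{H^3}^2\le\mathcal{N}(t).
\end{equation*}
To recover dissipation for $T$, I would introduce the interaction functional $\mathcal{I}(t):=-\sum_{|\alpha|\le 2}\int_{\mathbb{R}^3}\partial^\alpha U\cdot\partial^\alpha\mathbb{P}\nabla\cdot T\,dx$, which is controlled by the energy. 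Differentiating $\mathcal{I}$ in time, substituting both equations and using $\mathbb{P}\nabla\cdot D(U)=\tfrac12\Delta U$ (valid since $U$ is divergence-free), one obtains after Young's inequality
\begin{equation*}
\tfrac{d\mathcal{I}}{dt}+\tfrac{\mu_1}{2}\|\mathbb{P}\nabla\cdot T\|_{H^2}^2\le C\|\nabla U\|_{H^3}^2+\widetilde{\mathcal{N}}(t).
\end{equation*}
For a sufficiently small coefficient $\eta>0$, the Lyapunov functional $\mathcal{E}:=\tfrac12(\mu_2\|U\|_{H^3}^2+2\mu_1\|T\|_{H^3}^2)+\eta\mathcal{I}$ is equivalent to the full $H^3$ energy and satisfies $\tfrac{d\mathcal{E}}{dt}+c\mathcal{D}\le\mathcal{N}+\eta\widetilde{\mathcal{N}}$ with $\mathcal{D}:=\|\nabla U\|_{H^3}^2+\|\mathbb{P}\nabla\cdot T\|_{H^2}^2$.

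The nonlinear contributions are multilinear. The term $\Lambda^{-1}(u\cdot\nabla u)$ is handled by Moser/commutator estimates in $H^3$, while the genuinely nonlocal pieces $\Lambda^{-1}(u\cdot\nabla\tau)$ and $\Lambda^{-1}Q(\tau,\nabla u)$ are controlled by writing $u=\Lambda U$, $\tau=\Lambda T$ and exploiting the boundedness of Riesz-type multipliers on Sobolev spaces to redistribute derivatives; the upshot is that each term on the right of the energy inequality carries at least one factor drawn from $\mathcal{D}^{1/2}$. Under the smallness hypothesis $\|(U_0,T_0)\|_{H^3}\le\epsilon$, a standard bootstrap then yields $\mathcal{E}(t)+\int_0^t\mathcal{D}(s)\,ds\le C\epsilon^2$ for all time, which gives global existence and uniqueness. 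The main obstacle I anticipate is precisely that $\mathcal{D}$ controls only $\mathbb{P}\nabla\cdot T$, not the full $H^3$ norm of $T$, so the curl-free and very-high-frequency components of $T$ have no intrinsic decay and must be tracked carefully through the $T$-equation. Reconciling this degeneracy with the nonlinearity $Q(\tau,\nabla u)$, which couples the ``bad'' part of $\tau$ back to $\nabla U$ in a loss-of-derivative fashion, is where the $\Lambda^{-1}$ renormalization of \emph{both} unknowns is essential: it rebalances the coupling across frequencies so that the cross functional $\mathcal{I}$ generates exactly the dissipation needed to close the estimate at the natural scaling.
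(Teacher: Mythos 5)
This statement is Theorem 1.1, which the paper does not prove but quotes from Zhu's work \cite{zhuyi}; the paper only records that Zhu's proof rests on the damped-wave structure of $(u,\mathbb{P}\nabla\cdot\tau)$ together with \emph{time-weighted} energies. Your skeleton captures the first half of that mechanism correctly: the cancellation of the coupling terms in the basic energy, and the cross functional $\mathcal{I}$ that converts the interaction $\mu_1\|\mathbb{P}\nabla\cdot T\|^2$ into dissipation (this is exactly the identity the present paper exploits in \eqref{tul} and in the spectrum of the Green's matrix \eqref{G}). The sign and Young-inequality bookkeeping for $\mathcal{I}$ check out.

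The gap is in the claim that ``each term on the right of the energy inequality carries at least one factor drawn from $\mathcal{D}^{1/2}$'' and that ``a standard bootstrap then yields'' a global bound. One dissipative factor is not enough. Your dissipation $\mathcal{D}$ controls $\nabla U$ in $L^2_tH^3$ and $\mathbb{P}\nabla\cdot T$ in $L^2_tH^2$, but nothing about $\|T\|_{H^3}$ itself --- as you note, the non-divergence part of $T$ is undamped. Consequently the terms $(u\cdot\nabla\tau,\tau)_{H^2}$ (after the transport cancellation, via commutators) and $(Q(\tau,\nabla u),\tau)_{H^2}$ are bounded only by $\|\nabla u\|_{L^\infty\cap H^2}\,\|\tau\|_{H^2}^2\sim \mathcal{D}^{1/2}\,\mathcal{E}$: two energy factors and a single dissipative one. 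The resulting differential inequality $\tfrac{d}{dt}\mathcal{E}+c\,\mathcal{D}\le C\,\mathcal{D}^{1/2}\mathcal{E}$ gives, by Gr\"onwall, only $\mathcal{E}(t)\le\mathcal{E}(0)\exp\bigl(C\sqrt{t}\,(\int_0^t\mathcal{D})^{1/2}\bigr)$, which is not uniform in $t$, so smallness cannot be propagated and the continuation argument fails. To close one needs $\|\nabla u\|_{L^\infty}\in L^1_t$, which $L^2_t$ parabolic smoothing alone does not provide. This is precisely where Zhu's time-weighted energies enter: the $\dot H^{-1}$-type information encoded in $\Lambda^{-1}u_0,\Lambda^{-1}\tau_0\in H^3$ (which your argument carries along but never actually uses) is what yields algebraic decay of $\|\nabla u\|$ and hence its time-integrability. (The present paper circumvents the same difficulty differently, by making $\int_0^t\|u\|_{\dot B^{d/2+1}_{2,1}}\,\mathrm{d}t'$ itself part of the energy.) As written, your proposal identifies the obstacle in its last sentences but supplies no mechanism to overcome it, so the argument does not close.
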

It is noted that the required regularity of Theorem 1.1 is far from
the regularity prescribed by the scaling \eqref{scaling}, which
inspires us to consider the well-posedness of \eqref{OBA} ($a=0$) in critical Besov
spaces just like Chemin and Masmoudi had done in \cite{J.-Y.Chemin,R.Z.Zi}
for $a>0$.

Now we state the main result.
\begin{theorem}\label{Maintheorem}
Let $\nu,\mu_1,\mu_2>0$ and  $a=0$. There exists a small constant
$\varepsilon$ such that if
$\tau_0\in\dot{B}^{\frac{d}{2}-1}_{2,1}\cap \dot{B}^{\frac{d}{2}}_{2,1}$,
$u_0\in\dot{B}^{\frac{d}{2}-1}_{2,1}$ with
$$\|u_0\|_{\dot{B}^{\frac{d}{2}-1}_{2,1}}+
\|\tau_0\|_{\dot{B}^{\frac{d}{2}-1}_{2,1}\cap\dot{B}^{\frac{d}{2}}_{2,1}}\leq\varepsilon,$$
then the system \eqref{OBA} has a unique global solution
$(u,\tau)$ such that \begin{eqnarray*}&&u\in
C(\RR^+;\dot{B}^{\frac{d}{2}-1}_{2,1})\cap
L^1(\RR^+;\dot{B}^{\frac{d}{2}+1}_{2,1});\\
&&\tau\in C(\RR^+;\dot{B}^{\frac{d}{2}-1}_{2,1}\cap\dot{B}^{\frac{d}{2}}_{2,1}),\ \ \
\mathbb{P}\nabla\cdot\tau\in
L^1(\RR^+;\dot{B}^{\frac{d}{2}-1}_{2,1}+ \dot{B}^{\frac{d}{2}}_{2,1}). \end{eqnarray*} Here
$\mathbb{P}$ is the projection operator, and $\dot{B}^{s}_{2,1}$ is Besov space. One refers to Section \ref{Besovspace} for its definition.
\end{theorem}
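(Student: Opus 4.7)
The plan is to exploit the observation highlighted in the abstract. Setting $v := (-\Delta)^{-1/2}\mathbb{P}\nabla\cdot\tau$ and projecting the $u$-equation by $\mathbb{P}$, the linear part of \eqref{OBA} becomes the coupled pair
$$\partial_t u - \nu\Delta u = \mu_1 \Lambda v, \qquad \partial_t v = -\tfrac{\mu_2}{2}\Lambda u,$$
with $\Lambda := (-\Delta)^{1/2}$, which diagonalizes in Fourier via a $2\times 2$ matrix of trace $-\nu|\xi|^2$ and determinant $\mu_1\mu_2|\xi|^2/2$. Its eigenvalues $\lambda_\pm(\xi) = \tfrac{1}{2}(-\nu|\xi|^2 \pm \sqrt{\nu^2|\xi|^4 - 2\mu_1\mu_2|\xi|^2})$ form a complex conjugate pair with real part $-\nu|\xi|^2/2$ when $|\xi| < |\xi|_c := \sqrt{2\mu_1\mu_2}/\nu$, and become real, behaving asymptotically like $-\nu|\xi|^2$ and $-\mu_1\mu_2/(2\nu)$, when $|\xi| > |\xi|_c$. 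In the low-frequency regime both $u$ and $v$ therefore inherit heat-type decay (with gain of two spatial derivatives in $L^1_t$), whereas at high frequencies $u$ still decays parabolically but $v$, and hence $\mathbb{P}\nabla\cdot\tau$, only decays exponentially at a \emph{fixed} rate with no regularization in space. This is exactly what forces the two-regime function-space splitting in the conclusion and the extra assumption $\tau_0 \in \dot{B}^{\frac{d}{2}}_{2,1}$ on the high-frequency part of the initial stress.

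Guided by this, I would fix a Littlewood--Paley threshold $2^{j_0}\sim |\xi|_c$ and define the total energy
\begin{align*}
E(T) :={} & \|u\|_{\widetilde{L}^\infty_T(\dot{B}^{\frac{d}{2}-1}_{2,1})} + \|u\|_{L^1_T(\dot{B}^{\frac{d}{2}+1}_{2,1})} \\
& + \|\tau^\ell\|_{\widetilde{L}^\infty_T(\dot{B}^{\frac{d}{2}-1}_{2,1})} + \|\tau^\ell\|_{L^1_T(\dot{B}^{\frac{d}{2}+1}_{2,1})} \\
& + \|\tau^h\|_{\widetilde{L}^\infty_T(\dot{B}^{\frac{d}{2}}_{2,1})} + \|\tau^h\|_{L^1_T(\dot{B}^{\frac{d}{2}}_{2,1})},
\end{align*}
where $\tau^\ell$ and $\tau^h$ are the low- and high-frequency parts relative to $j_0$. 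Dyadic bounds on the Green's matrix should yield the linear estimate $E(T) \leq C(\|u_0\|_{\dot{B}^{\frac{d}{2}-1}_{2,1}} + \|\tau_0\|_{\dot{B}^{\frac{d}{2}-1}_{2,1}\cap\dot{B}^{\frac{d}{2}}_{2,1}})$ plus forcing terms from the nonlinearities. The nonlinear quantities $u\cdot\nabla u$, $u\cdot\nabla\tau$ and $Q(\tau,\nabla u)$ would then be estimated by Bony paraproduct decomposition and the usual Besov product laws, splitting $\tau = \tau^\ell + \tau^h$ inside every product so that each piece is bounded in its natural target norm. Summing the contributions closes the a priori estimate as $E(T) \leq CE(0) + CE(T)^2$, and smallness of $E(0)$ yields global existence by a standard continuation argument; uniqueness is then obtained by a contraction argument at one derivative lower regularity.

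The main obstacle is the high-frequency estimate of $\tau$: since the linear semigroup provides only a frequency-independent exponential decay there, any genuine loss of derivative in the nonlinear terms would be fatal. The most delicate point is controlling $u\cdot\nabla\tau^h$ at the Lipschitz-critical index $\frac{d}{2}$, for which I would employ a commutator-type transport estimate in Besov spaces, absorbing the lost derivative into the time-integrability of $u$ in $\dot{B}^{\frac{d}{2}+1}_{2,1}$. A secondary subtlety is that, mirroring the two-regime linear picture, the coupling term $\mathbb{P}\nabla\cdot\tau$ driving $u$ must be estimated in $\dot{B}^{\frac{d}{2}-1}_{2,1}$ on low frequencies (where $\tau$ enjoys parabolic smoothing) but only in $\dot{B}^{\frac{d}{2}}_{2,1}$ on high frequencies, which is precisely why the statement places $\mathbb{P}\nabla\cdot\tau$ in the sum space $L^1(\dot{B}^{\frac{d}{2}-1}_{2,1} + \dot{B}^{\frac{d}{2}}_{2,1})$ rather than a single homogeneous Besov space.
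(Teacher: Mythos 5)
Your linear analysis of the Green's matrix for $(u,\Lambda^{-1}\mathbb{P}\nabla\cdot\tau)$ is exactly the paper's starting point, and the two-regime picture (parabolic smoothing at low frequencies, frequency-independent damping of $\Lambda^{-1}\mathbb{P}\nabla\cdot\tau$ at high frequencies) is correct. But your energy functional over-claims what this analysis delivers, and the over-claim is fatal. The damping/smoothing mechanism acts \emph{only} on the component $\mathbb{P}\nabla\cdot\tau$, because only that component couples back into the $u$-equation; the part of $\tau$ in the kernel of $\mathbb{P}\nabla\cdot$ satisfies (linearly) $\partial_t\tau=\mu_2 D(u)$ with no dissipative term whatsoever when $a=0$. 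Consequently the terms $\|\tau^\ell\|_{L^1_T(\dot{B}^{d/2+1}_{2,1})}$ and $\|\tau^h\|_{L^1_T(\dot{B}^{d/2}_{2,1})}$ in your $E(T)$ are \emph{not} controlled by the linear estimate: the paper explicitly notes that bounds of the form $\|\tau\|^l\le C\|\Lambda^{-1}\mathbb{P}\nabla\cdot\tau\|^l$ fail, since $\mathbb{P}\nabla\cdot$ is far from injective on symmetric tensors. The correct energies (the paper's $\mathcal{E}^l,\mathcal{E}^h$) keep $\tau$ itself only in $L^\infty_t$ (in $\dot B^{d/2-1}_{2,1}$ at low and $\dot B^{d/2}_{2,1}$ at high frequencies) and put only $\Lambda^{-1}\mathbb{P}\nabla\cdot\tau$ (resp.\ $\mathbb{P}\nabla\cdot\tau$) in $L^1_t$.

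This gap propagates into the nonlinear step. With your (unavailable) $L^1_t$ control of $\tau$ in high norms, the product estimates would indeed be routine; without it, the convection term in the equation for $\mathbb{P}\nabla\cdot\tau$ cannot be closed by paraproducts alone, because $\mathbb{P}\nabla\cdot(u\cdot\nabla\tau)$ involves the full tensor $\tau$ at one derivative above what the $L^\infty_t$ bounds give in $L^1_t$. The paper's way out is the commutator identity
\begin{equation*}
\mathbb{P}\nabla\cdot(u\cdot\nabla\tau)=\mathbb{P}(u\cdot\nabla\mathbb{P}\nabla\cdot\tau)
+\mathbb{P}(\nabla u\cdot\nabla\tau)-\mathbb{P}(\nabla u\cdot\nabla\Delta^{-1}\nabla\cdot\nabla\cdot\tau),
\end{equation*}
which replaces $u\cdot\nabla\tau$ by a transport of the damped quantity $\mathbb{P}\nabla\cdot\tau$ plus terms carrying an extra factor of $\nabla u$ (which is in $L^1_t(\dot B^{d/2}_{2,1})$ and pairs with $\tau\in L^\infty_t$), together with a further mixed energy combining $u$ and $\tau$ directly via the cancellation $(\dot\Delta_j\nabla\cdot\tau,\dot\Delta_j u)+(\dot\Delta_j D(u),\dot\Delta_j\tau)=0$ to recover the $L^\infty_t$ bounds on $\tau$ itself. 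You need both of these ingredients (or substitutes for them); your proposal as written would stall at the linear a priori estimate.
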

\begin{remark}
When $d=3$, noting that  $H^2\hookrightarrow\dot{B}^{\frac{1}{2}}_{2,1}$ (see Proposition \ref{Besovproperties}),
Theorem \ref{Maintheorem} allows us to involve  a class of functions of the initial data
that Theorem \ref{zhuyi} does not contain. For example,
$$\chi_p(D)|x|^{-\sigma},\ \ \ \ 1<\sigma\le{3}/{2},$$
where $\chi_p(D)f\triangleq\mathcal{F}^{-1}(\chi_{\mathcal{B}(0,p)}\hat{f})$ with the radial function $\chi_{\mathcal{B}(0,p)}\in\mathcal{S}(\mathbb{R}^3)$ supported in the ball $\mathcal{\mathcal{B}}=\{\xi\in\mathbb{R}^d,\ |\xi|\leq p\}$. It is not difficult to check that
$$\chi_p(D)|x|^{-\sigma}\in{\dot{B}^{\frac{1}{2}}_{2,1}(\mathbb{R}^3)};\ \ \ \chi_p(D)|x|^{-\sigma}\notin H^2(\mathbb{R}^3).$$
For the detailed proof, one please refers to
Proposition \ref{example}.
\end{remark}

In \cite{zhuyi}, the author observed that
$(u,\mathbb{P}\nabla\cdot\tau)$ satisfies some kind of the damped
wave equations and  has enough decay regardless of $a=0$, then he
proved Theorem 1.1 by constructing two special time-weighted
energies. However, the approach used in \cite{zhuyi} seems not to
work for the critical spaces. On the other hand, the method used in
\cite{J.-Y.Chemin} for the critical Besov spaces relies heavily on
the damping effect ($a>0$). This makes us to dig out more
informations about the system \eqref{OBA} when $a=0$. We find out
that one part of the stress tensor $\tau$  has damping effect while
the stress tensor itself does not have. More precisely, motivated by
the work of Danchin \cite{R.DANCHIN2000} on the compressible
Navier-Stokes equations, we study the following mixed linear system
\begin{equation}\label{OBLG}
\begin{cases}
u_t-\nu\Delta u-\mu_1\Lambda(\Lambda^{-1}\mathbb{P}\nabla\cdot\tau)=\mathbb{P}E,\ \ \\
(\Lambda^{-1}\mathbb{P}\nabla\cdot\tau)_t+\frac{\mu_2}{2}\Lambda u=\Lambda^{-1}\mathbb{P}\nabla\cdot F, \\
\end{cases}
\end{equation}
where $\mathbb{P}$ is the projection operator,
\begin{equation*}\Lambda=(-\Delta)^{\frac12} \quad\mbox{and}\quad \Lambda^{-1}=(-\Delta)^{-\frac12}.
\end{equation*}
Let $\mathcal{G}(x,t)$ be the Green matrix of system \eqref{OBLG}, we derive
\begin{equation}\label{G}
\hat{\mathcal{G}}(\xi,t)=\left(
\begin{matrix}
\frac{\lambda_+e^{\lambda_+t}-\lambda_-e^
{\lambda_-t}}{\lambda_+-\lambda_-}&
\mu_1|\xi|\frac{e^{\lambda_+t}-e^{\lambda_-t}}{\lambda_+-\lambda_-}\\
-\frac{\mu_2}{2}|\xi|\frac{e^{\lambda_+t}-e^
{\lambda_-t}}{\lambda_+-\lambda_-}&
\frac{\lambda_+e^{\lambda_-t}-\lambda_-e^{\lambda_+t}}{\lambda_+-\lambda_-}
\end{matrix}
\right),
\end{equation}
where
\begin{eqnarray*}&&\lambda_+=\frac{-\nu|\xi|^2+\sqrt{\nu^2|\xi|^4-2\mu_1\mu_2|\xi|^2}}{2},\\
&&\lambda_-=\frac{-\nu|\xi|^2-\sqrt{\nu^2|\xi|^4-2\mu_1\mu_2|\xi|^2}}{2}.\end{eqnarray*}
By analyzing the behaviors of  $\widehat{\mathcal{G}}(x,t)$ in
different frequencies, we discover that $u$ as well as the low
frequencies of $\Lambda^{-1}\mathbb{P}\nabla\cdot\tau$ has a
parabolic smoothing effect, and the high frequencies of
$\Lambda^{-1}\mathbb{P}\nabla\cdot\tau$ have a damping effect.
Specifically, we set the energy in the low frequencies
\begin{equation}\label{elr}
\begin{split}
\mathcal{E}^l_r(t)&\triangleq\sup_t\|u\|^l_{\dot{B}^{\frac{d}{2}-1}_{2,1}}\!\!+\!
\sup_t\|\Lambda^{-1}\mathbb{P}\nabla\!\cdot\!\tau\|^l_{\dot{B}^{\frac{d}{2}-1}_{2,1}}\!+\!
\int_0^t\!\!\|u\|^l_{\dot{B}^{\frac{d}{2}+1}_{2,1}}\!\mathrm{d}t'\!+\!\int^t_0\!\!\|\Lambda^{-1}\mathbb{P}\nabla\!\cdot\!\tau\|^l_{\dot{B}^{\frac{d}{2}+1}_{2,1}}
\!\mathrm{d}t',
\end{split}
\end{equation}and the energy in the high frequencies
\begin{equation}\label{ehr}
\begin{split}
\mathcal{E}^h_r(t)&\triangleq\sup_t\|u\|^h_{\dot{B}^{\frac{d}{2}-1}_{2,1}}+
\sup_t\|\mathbb{P}\nabla\cdot\tau\|^h_{\dot{B}^{\frac{d}{2}-1}_{2,1}}
+\int_0^t\|u\|^h_{\dot{B}^{\frac{d}{2}+1}_{2,1}}\mathrm{d}t'
+\int^t_0\|\mathbb{P}\nabla\cdot\tau\|^h_{\dot{B}^{\frac{d}{2}-1}_{2,1}}\mathrm{d}t',
\end{split}
\end{equation}
where  $\|\ .\ \|^l$ and $\|\ .\ \|^h$ denotes the low and high part
of corresponding norm (see Subsection 2.3). Unfortunately, there
seems no damping effect on another part of the tensor $\tau$, and
the following estimates do not hold
$$\|\tau\|^l_{\dot{B}^{\frac{d}{2}-1}_{2,1}}\le C\|\Lambda^{-1}\mathbb{P}\nabla\!\cdot\!\tau\|^l_{\dot{B}^{\frac{d}{2}-1}_{2,1}}
,\ \ \|\tau\|^l_{\dot{B}^{\frac{d}{2}-1}_{2,1}}\le
C\|\mathbb{P}\nabla\cdot\tau\|^h_{\dot{B}^{\frac{d}{2}-1}_{2,1}}.$$
 Thus it is difficult to deal with some parts of the
nonlinear terms \big(e.g. $Q(\tau,\nabla u)$\big) by the original
energies $\mathcal{E}^l_r(t), \mathcal{E}^h_r(t)$. Naturally, we
want to get more estimates containing term $\tau$ instead of
$\mathbb{P}\nabla\cdot\tau$, and this motivates us to construct the
energies $\mathcal{E}^l(t), \mathcal{E}^h(t)$ as the following,
\begin{equation}\label{el}
\mathcal{E}^l(t)\triangleq\sup_t\|u\|^l_{\dot{B}^{\frac{d}{2}-1}_{2,1}}+
\sup_t\|\tau\|^l_{\dot{B}^{\frac{d}{2}-1}_{2,1}}+\int_0^t\|u\|^l_{\dot{B}^{\frac{d}{2}+1}_{2,1}
}\mathrm{d}t'+\int^t_0\|\Lambda^{-1}\mathbb{P}\nabla\cdot
\tau\|^l_{\dot{B}^{\frac{d}{2}+1}_{2,1}}\mathrm{d}t',
\end{equation}and
\begin{equation}\label{eh}
\mathcal{E}^h(t)\triangleq\sup_t\|u\|^h_{\dot{B}^{\frac{d}{2}-1}_{2,1}}+
\sup_t\|\tau\|^h_{\dot{B}^{\frac{d}{2}}_{2,1}}
+\int_0^t\|u\|^h_{\dot{B}^{\frac{d}{2}+1}_{2,1}}\mathrm{d}t'
+\int^t_0\|\mathbb{P}\nabla\cdot
\tau\|^h_{\dot{B}^{\frac{d}{2}-1}_{2,1}}\mathrm{d}t'.
\end{equation}
Then  the above-mentioned energies  supply $L^\infty_T$ estimate of $\|\tau\|_{\dot{B}^{\frac{d}{2}-1}_{2,1}}$\big($\|\tau\|_{\dot{B}^{\frac{d}{2}}_{2,1}}$\big) in the low (high) frequencies.
For more details, please see Section \ref{PRIORI}. Let us emphasize that if we handle the nonlinear terms of $\tau$ involved, we shall appeal to the way used in \cite{zhuyi}, i.e.,
$$\mathbb{P}\nabla\cdot\tau=\mathbb{P}(u\cdot\nabla\mathbb{P}\nabla\cdot\tau)+\text{some terms containing}\  \nabla u.$$

\vspace{.2cm}

\noindent\textbf{Notations.}
Throughout this paper, $C$ stands for the constant and changes from line to line.
We use $\widehat{u}$ and $\mathcal{F}(u)$ to denote the Fourier transform of $u$.


\section{Littlewood-Paley theory and Besov spaces}\label{Besovspace}
\setcounter{section}{2}\setcounter{equation}{0}

\subsection{Littlewood-Paley decomposition}
Now we introduce the Littlewood-Paley decomposition, which relies on
the dyadic partition of unity, and we can refer to \cite{H.BAHOURI}
for more details. Let us choose two radial functions $\varphi,
\chi\in\mathcal{S}(\mathbb{R}^d)$ supported in
$\mathcal{C}=\{\xi\in\mathbb{R}^d,\
\frac{3}{4}\leq|\xi|\leq\frac{8}{3}\}$ and
$\mathcal{B}=\{\xi\in\mathbb{R}^d,\ |\xi|\leq\frac{4}{3}\}$
respectively such that
$$\sum_{j\in\mathbb{Z}}\varphi(2^{-j}\xi)=1\ \ \text{if}\ \ \xi\neq0.$$
Denote $h(x)=\mathcal{F}^{-1}\big(\varphi(\xi)\big)$, we define the
dyadic blocks as follows
\begin{eqnarray*}
&&\dot{\Delta}_j u=\varphi(2^{-j}D)u=2^{jd}\int_{\mathbb{R}^d}h(2^jy)u(x-y)\mathrm{d}y,\\
&&\dot{S}_ju=\chi(2^{-j}D)u.
\end{eqnarray*}
\begin{definition}
We denote by $\mathcal{S}_h'$ the space of temperate distributions $u$ such that
$$\lim_{j\rightarrow-\infty}\dot{S}_j u=0 \ \ \text{in}\ \ \ \mathcal{S}'.$$
\end{definition}
\begin{remark}\label{Sh}
If a temperate distribution $u$ is such that its Fourier transform $\mathcal{F}u$ is locally integrable near $0$, then $u$ belongs to $\mathcal{S}_h'$.
\end{remark}
Then the homogeneous Littlewood-paley decomposition is defined as
\begin{equation}\label{decomposition}
u=\sum_{j\in\mathbb{Z}}\dot{\Delta}_ju,\qquad \mbox{for}\quad u\in\mathcal{S}'_h.
\end{equation}
 With our choice of $\varphi$ and $\chi$, it is easy to verify that
\begin{equation}\label{cancellations}
\dot{\Delta}_j\dot{\Delta}_k u=0\ \ \text{if}\ \ |j-k|\geq2,\ \  \ \text{and} \ \ \ \  \dot{\Delta}_j(\dot{S}_{k-1}u\dot{\Delta}_ku)=0\ \ \text{if}\ \ |j-k|\geq5.
\end{equation}
Next, let us introduce a useful lemma which will be repeatedly used throughout this paper.
\begin{lemma}\label{benstein}
\cite{H.BAHOURI} Let $1\leq p\leq q\leq+\infty$. Then for any
$\gamma\in(\mathbb{N}\cup{\{0\}})^d$, there exists a constant $C$
independent of $f,j$ such that
\begin{equation*}\begin{aligned}
&\mathrm{supp}\hat{f}\subseteq\{|\xi|\leq A_02^j\}\Rightarrow
\|\partial^\gamma f\|_{L^q}\leq C2^{j|\gamma|+jd(\frac{1}{p}-\frac{1}{q})}\|f\|_{L^p},\\
&\mathrm{supp}\hat{f}\subseteq\{A_1 2^j\le|\xi|\leq A_22^j\}\Rightarrow\|f\|_{L^p}\leq C2^{-j|\gamma|}\sup_{|\beta|=|\gamma|}\|\partial^\beta f\|_{L^p}.
\end{aligned}\end{equation*}
\end{lemma}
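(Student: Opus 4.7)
The plan is to prove both inequalities by representing $f$ (or $\partial^\gamma f$) as a convolution against a rescaled Schwartz kernel and applying Young's convolution inequality, exploiting that the Fourier support of $f$ is compactly localized at dyadic scale $2^j$.

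For the first inequality, I would pick a radial cutoff $\psi\in\mathcal{S}(\mathbb{R}^d)$ supported in $\{|\xi|\le 2A_0\}$ and identically $1$ on $\{|\xi|\le A_0\}$, and set $\hat K_j(\xi):=(i\xi)^\gamma\psi(2^{-j}\xi)$. Since $\psi(2^{-j}\cdot)\equiv 1$ on the Fourier support of $f$, we have $\partial^\gamma f=K_j*f$. A rescaling $\eta=2^{-j}\xi$ shows $K_j(x)=2^{j|\gamma|+jd}h(2^jx)$ with $h=\mathcal{F}^{-1}((i\xi)^\gamma\psi)\in\mathcal{S}(\mathbb{R}^d)$, so $\|K_j\|_{L^r}\le C\cdot 2^{j|\gamma|+jd(1-1/r)}$. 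Young's inequality with $1+1/q=1/r+1/p$ then yields the stated bound.

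For the second (annular) inequality, the cleanest approach is to prove the base case $|\gamma|=1$ directly and then iterate. Choose a smooth $\tilde\psi$ supported in $\{A_1/2\le|\xi|\le 2A_2\}$ and equal to $1$ on $\{A_1\le|\xi|\le A_2\}$. Using $|\xi|^2=\sum_k\xi_k^2$, I would write on the Fourier support of $f$
\begin{equation*}
\hat f(\xi)=\sum_{k=1}^d\frac{-i\xi_k\,\tilde\psi(2^{-j}\xi)}{|\xi|^2}\cdot i\xi_k\hat f(\xi)=\sum_{k=1}^d m_k(\xi)\,\widehat{\partial_kf}(\xi).
\end{equation*}
The rescaling $\eta=2^{-j}\xi$ gives $m_k(\xi)=2^{-j}\tilde m_k(2^{-j}\xi)$ with $\tilde m_k$ smooth, compactly supported and away from the origin; hence $k^j_k:=\mathcal{F}^{-1}m_k$ lies in $L^1$ with $\|k^j_k\|_{L^1}\le C\cdot 2^{-j}$, and $f=\sum_k k^j_k*\partial_kf$. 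Young's inequality closes the $|\gamma|=1$ case. For general $\gamma$ with $|\gamma|=N$, I observe that the annular Fourier support condition is preserved by taking any $\partial^\beta$; an $N$-fold iteration of the base case then produces the factor $2^{-jN}$ and the supremum over $|\beta|=N$.

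The main obstacle is a subtle mismatch that appears in the annular estimate: a naive attempt to invert the differential operator of order $N$ in one stroke by dividing by the symbol $|\xi|^{2N}$ (which is the only smooth nonvanishing radial polynomial choice) forces one to reintroduce $|\xi|^{2N}\hat f$ and hence produces derivatives of order $2N$ rather than $N$. The inductive reduction to the $N=1$ case, equivalently the trick of splitting $\xi^{2\beta}=\xi^\beta\cdot\xi^\beta$ and absorbing exactly one factor into the multiplier while leaving the other as a derivative, is the key technical point; once it is in place the remaining scaling and Young estimates are routine.
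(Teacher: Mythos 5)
Your proof is correct and is precisely the standard argument for Bernstein's lemma found in the cited reference [BCD]: the paper itself gives no proof, quoting the result from \cite{H.BAHOURI}, and your convolution-plus-Young argument for the low-frequency case and your annular-multiplier inversion with induction on $|\gamma|$ for the second case are exactly how it is proved there. Nothing further is needed.
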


\subsection{Homogeneous Besov space}
\begin{definition}\label{Besovdefinition}
Let $u\in\mathcal{S}'(\mathbb{R}^d)$, $s\in\mathbb{R}$, and $1\leq p,r\leq\infty$, we set
$$\|u\|_{\dot{B}^s_{p,r}}\triangleq \|\{ 2^{js}\|\dot{\Delta}_j u\|_{L^p}\}_j\|_{l^r}.$$
We then define the space $\dot{B}^s_{p,r}\triangleq\{u\in\mathcal{S}'_h,\, \|u\|_{\dot{B}^s_{p,r}}<\infty\}.$
\end{definition}
\begin{remark}\label{wellposedness}
The definition of the space $\dot{B}^s_{p,r}$ does not depend on the choice of the couple ($\varphi,\chi$) defining the Littlewood-Paley decomposition.
\end{remark}

Let us now state some classical properties of the homogeneous Besov spaces.
\begin{proposition}\label{Besovproperties}
For all $s,s_1,s_2\in\mathbb{R},1\leq p,p_1,p_2,r,r_1,r_2\leq+\infty$, we have the following properties:  \\
(i) If $p_1\leq p_2$, $r_1\leq r_2$, then $\dot{B}^s_{p_1,r_1}\hookrightarrow \dot{B}^{s-\frac{d}{p_1}+\frac{d}{p_2}}_{p_2,r_2}$.\\
(ii) If $s_1\neq s_2$ and $\theta\in(0,1)$, then
$$\|u\|_{\dot{B}_{p,r}^{\theta s_1+(1-\theta)s_2}}\leq\|u\|^\theta_{\dot{B}^{s_1}_{p,r}}\|u\|^{1-\theta}_
{\dot{B}^{s_2}_{p,r}}.$$
(iii) $\dot{H}^s\thickapprox\dot{B}^s_{2,2}$ and
$$\frac{1}{C^{|s|+1}}\|u\|_{\dot{B}^s_{2,2}}\leq\|u\|_{\dot{H}^s}\leq C^{|s|+1}\|u\|_{\dot{B}^s_{2,2}}.$$
(iv) If $s>0$, then $\dot{B}^s_{2,1}\cap L^\infty$ (especially $\dot{B}^{\frac{d}{2}}_{2,1}$) is an algebra.
\end{proposition}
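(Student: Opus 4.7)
The proposition packages four standard properties of homogeneous Besov spaces, and my plan is to verify each one by a short Littlewood--Paley argument based on the decomposition \eqref{decomposition}, the almost orthogonality \eqref{cancellations}, and the Bernstein inequalities of Lemma \ref{benstein}. The only item that really uses a structural hypothesis ($s>0$) is part (iv), which relies on Bony's paraproduct decomposition; the first three are direct bookkeeping with the dyadic frequency localization.

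For (i), since $\mathrm{supp}\,\widehat{\dot\Delta_j u}\subset\{|\xi|\sim 2^j\}$, the first Bernstein inequality of Lemma \ref{benstein} gives $\|\dot\Delta_j u\|_{L^{p_2}}\le C\,2^{jd(1/p_1-1/p_2)}\|\dot\Delta_j u\|_{L^{p_1}}$; multiplying by $2^{j(s-d/p_1+d/p_2)}$, taking the $\ell^{r_2}$ norm in $j$, and then using the elementary embedding $\ell^{r_1}\hookrightarrow\ell^{r_2}$ for $r_1\le r_2$, yields the embedding. For (ii), setting $s=\theta s_1+(1-\theta)s_2$ and writing
$$2^{js}\|\dot\Delta_j u\|_{L^p}=\bigl(2^{js_1}\|\dot\Delta_j u\|_{L^p}\bigr)^{\theta}\bigl(2^{js_2}\|\dot\Delta_j u\|_{L^p}\bigr)^{1-\theta},$$
the claim follows from Hölder's inequality in the counting measure on $\mathbb Z$ with the conjugate exponents $1/\theta$ and $1/(1-\theta)$. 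For (iii), Plancherel rewrites $\|u\|_{\dot H^s}^2=\int|\xi|^{2s}|\hat u(\xi)|^2\,d\xi$; the dyadic partition of unity, together with $\tfrac34\cdot 2^j\le|\xi|\le\tfrac83\cdot 2^j$ on $\mathrm{supp}\,\varphi(2^{-j}\cdot)$ and the almost orthogonality \eqref{cancellations}, produces $\|u\|_{\dot H^s}^2\sim\sum_j 2^{2js}\|\dot\Delta_j u\|_{L^2}^2$, and tracking the numerical ratio $8/3$ through the $|s|$ comparisons supplies the explicit constant $C^{|s|+1}$.

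For (iv), I would deploy Bony's decomposition
$$fg=T_fg+T_gf+R(f,g),\quad T_fg=\sum_j\dot S_{j-1}f\,\dot\Delta_j g,\quad R(f,g)=\sum_{|j-k|\le 1}\dot\Delta_j f\,\dot\Delta_k g.$$
For $T_fg$, \eqref{cancellations} confines the frequency $k$ contribution to $|j-k|\le 4$, and $\|\dot S_{j-1}f\|_{L^\infty}\le C\|f\|_{L^\infty}$ gives $\|T_fg\|_{\dot B^s_{2,1}}\le C\|f\|_{L^\infty}\|g\|_{\dot B^s_{2,1}}$ for any $s\in\mathbb R$; the symmetric bound for $T_gf$ is analogous. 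For the remainder, $\dot\Delta_k R(f,g)$ only receives contributions from $j\ge k-N$; applying the first Bernstein inequality at frequency $2^k$ to convert the $L^1$-type product bound into an $L^2$ bound leaves a sum essentially of the form $\sum_{j\ge k-N} 2^{(k-j)s}\,\|f\|_{L^\infty}\cdot 2^{js}\|\dot\Delta_j g\|_{L^2}$, whose summability in $k$ (after another $\ell^1$ sum) requires $s>0$. Combining the three pieces gives $\|fg\|_{\dot B^s_{2,1}}\le C(\|f\|_{L^\infty}\|g\|_{\dot B^s_{2,1}}+\|g\|_{L^\infty}\|f\|_{\dot B^s_{2,1}})$, which is the algebra property on $\dot B^s_{2,1}\cap L^\infty$; the special case $s=d/2$ needs no intersection with $L^\infty$, because part (i) already gives $\dot B^{d/2}_{2,1}\hookrightarrow\dot B^0_{\infty,1}\hookrightarrow L^\infty$.

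The main technical obstacle across the whole proposition is the remainder $R(f,g)$ in (iv): one has to invoke Bernstein in the correct direction to trade a factor $2^{jd/2}$ against an $L^2$ norm, and then the geometric series $\sum 2^{-s(j-k)}$ converges precisely because $s>0$. Parts (i)--(iii) are essentially clean frequency bookkeeping, so the Bony argument for the algebra property is the only place where care is really needed.
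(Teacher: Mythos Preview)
The paper does not actually prove this proposition: immediately after stating Propositions~\ref{Besovproperties} and~\ref{BONY} it simply writes ``We can refer to \cite{H.BAHOURI} for the proof of these propositions.'' Your sketch is correct and is precisely the standard argument one finds in that reference---Bernstein plus the $\ell^{r_1}\hookrightarrow\ell^{r_2}$ embedding for (i), H\"older in the counting measure for (ii), Plancherel together with the dyadic partition for (iii), and Bony's decomposition for (iv). In fact the product estimate you derive in (iv) is exactly the first statement of Proposition~\ref{BONY}, so you have effectively supplied proofs of both results at once.

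One minor remark on your handling of the remainder in (iv): you do not actually need Bernstein or any ``$L^1$-type'' intermediate step there. Since $\dot\Delta_k R(f,g)$ only sees pairs with $j\ge k-N$, the direct H\"older bound $\|\dot\Delta_j f\,\widetilde{\dot\Delta}_j g\|_{L^2}\le\|f\|_{L^\infty}\|\widetilde{\dot\Delta}_j g\|_{L^2}$ already yields
\[
2^{ks}\|\dot\Delta_k R(f,g)\|_{L^2}\lesssim\|f\|_{L^\infty}\sum_{j\ge k-N}2^{(k-j)s}\,2^{js}\|\widetilde{\dot\Delta}_j g\|_{L^2},
\]
and Young's convolution inequality in $j$ finishes (using $s>0$). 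Your final expression is the same, so this is only a cosmetic simplification, not a gap.
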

\begin{proposition}\label{BONY}
Let $s>0, u\in L^\infty\cap\dot{B}^s_{2,1}$ and $v\in L^\infty\cap\dot{B}^s_{2,1}$. Then $uv\in L^\infty\cap\dot{B}^s_{2,1}$ and
$$\|uv\|_{\dot{B}^s_{2,1}}\lesssim \|u\|_{L^\infty}\|v\|_{\dot{B}^s_{2,1}}+\|v\|_{L^\infty}\|u\|_{\dot{B}^s_{2,1}}.$$
Let $s_1,s_2\leq\frac{d}{2}$ such that $s_1+s_2>0$, $u\in\dot{B}^{s_1}_{2,1}$ and $v\in\dot{B}^{s_2}_{2,1}$. Then $uv\in\dot{B}^{s_1+s_2-\frac{d}{2}}_{2,1}$ and
$$\|uv\|_{\dot{B}^{s_1+s_2-\frac{d}{2}}_{2,1}}\lesssim\|u\|_{\dot{B}^{s_1}
_{2,1}}\|v\|_{\dot{B}^{s_2}_{2,1}}.$$
Let $u\in\dot{B}^0_{d,\infty}$
and $v\in\dot{B}^1_{d,1}$. Then $uv \in\dot{B}^0_{d,\infty}$
 $$\|uv\|_{\dot{B}^0_{d,\infty}}\lesssim\|u\|_{\dot{B}^0_{d,\infty}}
\|v\|_{\dot{B}^1_{d,1}}.$$
\end{proposition}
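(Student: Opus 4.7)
The plan is to prove all three estimates by Bony's paraproduct decomposition
\[
uv = T_u v + T_v u + R(u,v),\qquad T_u v := \sum_{j}\dot{S}_{j-1}u\,\dot{\Delta}_j v,\qquad R(u,v):=\sum_{|j-k|\le 1}\dot{\Delta}_j u\,\dot{\Delta}_k v.
\]
The cancellations in \eqref{cancellations} imply that $\dot{\Delta}_j T_u v$ and $\dot{\Delta}_j T_v u$ are Fourier-supported in an annulus $\{|\xi|\sim 2^j\}$, so the target dyadic norm can be read off directly; the remainder $\dot{\Delta}_j R(u,v)$ is supported in a ball of radius $\lesssim 2^j$, so Bernstein's inequality (Lemma \ref{benstein}) must be invoked to recover derivatives. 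The hypotheses on the regularity indices are precisely what makes the resulting geometric tails convergent.

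For assertion (i), I would bound the paraproducts by Hölder with one factor in $L^\infty$,
\[
\|\dot{\Delta}_j T_u v\|_{L^2} \lesssim \sum_{|k-j|\le 4}\|\dot{S}_{k-1} u\|_{L^\infty}\|\dot{\Delta}_k v\|_{L^2}\lesssim \|u\|_{L^\infty}\sum_{|k-j|\le 4}\|\dot{\Delta}_k v\|_{L^2},
\]
then multiply by $2^{js}$ and sum in $\ell^1(j)$; the symmetric estimate for $T_v u$ is identical. For the remainder the ball support forces a Bernstein loss,
\[
\|\dot{\Delta}_j R(u,v)\|_{L^2}\lesssim 2^{jd/2}\!\!\sum_{k\ge j-3}\!\!\|\dot{\Delta}_k u\|_{L^2}\|\dot{\Delta}_k v\|_{L^\infty},
\]
and the tail $\sum_{k\ge j}2^{(j-k)s}$ converges precisely because $s>0$; this convergence requirement is the main obstacle in assertions (i) and (ii).

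Assertion (ii) runs through the same scheme with the Bernstein-based upgrade
\[
\|\dot{S}_{k-1}u\|_{L^\infty}\lesssim \sum_{m\le k-2}2^{md/2}\|\dot{\Delta}_m u\|_{L^2}\lesssim 2^{k(d/2-s_1)}\|u\|_{\dot{B}^{s_1}_{2,1}},
\]
valid under $s_1\le d/2$ (the endpoint being absorbed by the $\ell^1$-summability built into the norm). Inserting this into the paraproduct estimate produces the factor $2^{j(s_1+s_2-d/2)}$ matching the target Besov norm, with the symmetric bound on $T_v u$ obtained by interchanging $u$ and $v$. The remainder is again handled via Bernstein, and the hypothesis $s_1+s_2>0$ is exactly what is needed for the geometric tail in $R(u,v)$ to converge.

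Assertion (iii) follows the analogous scheme in $L^d$. Using the embedding $\dot{B}^1_{d,1}\hookrightarrow L^\infty$ (from Proposition \ref{Besovproperties}(i)) together with $\|\dot{S}_{k-1}v\|_{L^\infty}\lesssim \|v\|_{\dot{B}^1_{d,1}}$, one readily controls $T_v u$ and $R(u,v)$ in $L^d$ against $\|u\|_{\dot{B}^0_{d,\infty}}$. The delicate point, and the main obstacle of the whole proposition, is $T_u v$: because $u\in \dot{B}^0_{d,\infty}$ is only measured by an $\ell^\infty$ norm, $\|\dot{S}_{k-1}u\|_{L^\infty}$ has no uniform bound. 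I would resolve this by pairing $\|\dot{S}_{k-1}u\|_{L^d}$ against $\|\dot{\Delta}_k v\|_{L^\infty}\lesssim 2^k\|\dot{\Delta}_k v\|_{L^d}$ via Hölder, then exploiting that only finitely many $k\sim j$ contribute to $\dot{\Delta}_j T_u v$ so that the summability of $v$ in $\dot{B}^1_{d,1}$ absorbs the loss arising from the $\ell^\infty$-index on the $u$-side, preserving a uniform-in-$j$ bound in $L^d$.
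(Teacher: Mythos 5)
The paper itself offers no proof of this proposition (it simply defers to the reference of Bahouri--Chemin--Danchin), so the only question is whether your Bony-decomposition argument is sound; the route you choose is the canonical one. Assertions (i) and (ii) are essentially correct. Note only that in your displayed remainder bound for (i) the prefactor $2^{jd/2}$ should not appear together with the $L^2\times L^\infty$ H\"older pairing: either estimate $\|\dot{\Delta}_j R(u,v)\|_{L^2}\lesssim\sum_{k\ge j-3}\|\dot{\Delta}_k u\|_{L^2}\|\dot{\widetilde{\Delta}}_k v\|_{L^\infty}$ directly, or apply Bernstein from $L^1$ with two $L^2$ factors; as written, the extra $2^{jd/2}$ would destroy the convergence of the tail you then invoke. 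Your subsequent sentence shows you intend the former, so this is a slip rather than a gap.

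The genuine gap is in assertion (iii), at the very term $T_u v$ you single out as delicate. Your proposed fix --- H\"older as $\|\dot{S}_{k-1}u\|_{L^d}\,\|\dot{\Delta}_k v\|_{L^\infty}$ --- fails at the first factor: $\dot{S}_{k-1}u=\sum_{m\le k-2}\dot{\Delta}_m u$ is an infinite sum of blocks, each of $L^d$-size at most $\|u\|_{\dot{B}^0_{d,\infty}}$ but with no decay in $m$, and $\dot{B}^0_{d,\infty}$ does not embed into $L^d$; hence $\|\dot{S}_{k-1}u\|_{L^d}$ admits no bound in terms of $\|u\|_{\dot{B}^0_{d,\infty}}$. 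The Bernstein loss must instead be taken on the $u$-side, where the low-frequency sum becomes geometric:
\[
\|\dot{S}_{k-1}u\|_{L^\infty}\le\sum_{m\le k-2}\|\dot{\Delta}_m u\|_{L^\infty}\lesssim\sum_{m\le k-2}2^{m}\|\dot{\Delta}_m u\|_{L^d}\lesssim 2^{k}\|u\|_{\dot{B}^0_{d,\infty}},
\]
and consequently
\[
\|\dot{\Delta}_j T_u v\|_{L^d}\lesssim\sum_{|k-j|\le4}\|\dot{S}_{k-1}u\|_{L^\infty}\|\dot{\Delta}_k v\|_{L^d}\lesssim\|u\|_{\dot{B}^0_{d,\infty}}\sum_{k}2^{k}\|\dot{\Delta}_k v\|_{L^d}=\|u\|_{\dot{B}^0_{d,\infty}}\|v\|_{\dot{B}^1_{d,1}}
\]
uniformly in $j$. (Equivalently, $\dot{B}^0_{d,\infty}\hookrightarrow\dot{B}^{-1}_{\infty,\infty}$ and one invokes the standard continuity $T:\dot{B}^{-1}_{\infty,\infty}\times\dot{B}^{1}_{d,1}\to\dot{B}^{0}_{d,1}\hookrightarrow\dot{B}^0_{d,\infty}$.) With this replacement, and with your correct treatment of $T_v u$ and $R(u,v)$ via $\dot{B}^1_{d,1}\hookrightarrow L^\infty$ and the $\ell^1$-summability of $2^k\|\dot{\Delta}_k v\|_{L^d}$, assertion (iii) goes through.
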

We can refer to \cite{H.BAHOURI} for the proof of these
propositions.

\begin{proposition}\label{example}
Let $\sigma\in(1,\frac{3}{2}]$, then we  get $$\chi_p(D)|x|^{-\sigma}\in{\dot{B}^{\frac{1}{2}}_{2,1}(\mathbb{R}^3)};\ \ \chi_p(D)|x|^{-\sigma}\notin H^2(\mathbb{R}^3).$$
Here $\chi_p(D)f=\mathcal{F}^{-1}(\chi_{B(0,p)}\hat{f})$.
\end{proposition}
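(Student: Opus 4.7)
The approach I would take is to use the explicit Fourier transform
\[
\mathcal{F}(|x|^{-\sigma})(\xi)=c_\sigma|\xi|^{\sigma-3}, \qquad \sigma\in(0,3),
\]
valid on $\mathbb{R}^3$, so that
\[
\widehat{\chi_p(D)|x|^{-\sigma}}(\xi) = c_\sigma\, \chi_{\mathcal{B}(0,p)}(\xi)\,|\xi|^{\sigma-3}
\]
is a radial symbol compactly supported in $\mathcal{B}(0,p)$. Since $|\xi|^{\sigma-3}$ is locally integrable near the origin whenever $\sigma>0$ (as $\int_0^1 r^{\sigma-1}\,\intd r<\infty$), Remark \ref{Sh} places $\chi_p(D)|x|^{-\sigma}$ in $\mathcal{S}_h'$, so its homogeneous Besov norm is well-defined. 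The whole argument then reduces to a dyadic Plancherel computation on this explicit symbol.

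For the membership in $\dot{B}^{1/2}_{2,1}(\mathbb{R}^3)$, I would split the sum $\sum_j 2^{j/2}\|\dot\Delta_j\chi_p(D)|x|^{-\sigma}\|_{L^2}$ at a threshold $J_0=J_0(p)$. For $j>J_0$ the Fourier supports $\{|\xi|\sim 2^j\}$ and $\mathcal{B}(0,p)$ are disjoint, so the dyadic block vanishes. For $j\le J_0$ the annulus $\{|\xi|\sim 2^j\}$ lies deep inside the region where the smooth radial cutoff $\chi_{\mathcal{B}(0,p)}$ is bounded above and below by positive constants, and Plancherel gives
\[
\|\dot\Delta_j\chi_p(D)|x|^{-\sigma}\|_{L^2}^2\sim \int_{|\xi|\sim 2^j}|\xi|^{2(\sigma-3)}\,\intd\xi\sim 2^{(2\sigma-3)j}.
\]
Hence $2^{j/2}\|\dot\Delta_j\chi_p(D)|x|^{-\sigma}\|_{L^2}\sim 2^{(\sigma-1)j}$, and summing over $j\le J_0$ yields a convergent geometric series precisely because $\sigma>1$.

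For the non-membership in $H^2(\mathbb{R}^3)$, I would apply Plancherel and restrict the integral to a ball $\mathcal{B}(0,\delta)$ on which $\chi_{\mathcal{B}(0,p)}$ is bounded below and $(1+|\xi|^2)^2\ge 1$, obtaining
\[
\|\chi_p(D)|x|^{-\sigma}\|_{H^2}^2\gtrsim \int_0^\delta r^{2(\sigma-3)}\cdot r^2\,\intd r = \int_0^\delta r^{2\sigma-4}\,\intd r,
\]
which diverges as soon as $2\sigma-4\le -1$, that is $\sigma\le 3/2$, and this covers the full range $\sigma\in(1,3/2]$.

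The only (mild) obstacle is handling the smooth cutoff $\chi_{\mathcal{B}(0,p)}$ rather than the sharp indicator: I need that it is uniformly bounded above on $\mathbb{R}^3$ and bounded below by a positive constant on some neighborhood of the origin. Both properties hold for any standard nonnegative radial bump with $\chi_{\mathcal{B}(0,p)}(0)>0$ (which is the only case of interest, since otherwise the statement is vacuous), and they contribute only harmless multiplicative constants. No delicate analytic tools are needed; the proof is essentially a direct computation on the explicit Fourier symbol.
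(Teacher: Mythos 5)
Your proposal is correct and follows essentially the same route as the paper: both use $\mathcal{F}(|x|^{-\sigma})=c_\sigma|\xi|^{\sigma-3}$, obtain $\|\dot\Delta_j\chi_p(D)|x|^{-\sigma}\|_{L^2}\lesssim 2^{j(\sigma-\frac32)}$ so that the $\dot B^{1/2}_{2,1}$ sum is geometric with ratio $2^{\sigma-1}$ over $j$ below a cutoff threshold, and disprove $H^2$ membership by showing the $L^2$ norm already diverges near $\xi=0$ for $\sigma\le 3/2$. The only difference is cosmetic — you compute the block norm by Plancherel on the Fourier side, while the paper uses the physical-space scaling identity $\dot\Delta_j|x|^{-\sigma}=2^{j\sigma}\widetilde h(2^jx)$ — and your explicit remark that $\chi_{\mathcal{B}(0,p)}$ must be bounded below near the origin for the divergence argument is a point the paper leaves implicit.
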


\begin{proof}

Noticing that
$$\mathcal{F}(|x|^{-\sigma})=C|\xi|^{\sigma-3}\in L^1_{loc}(\mathbb{R}^3)$$ and Remark \ref{Sh}, we
get
 $|x|^{-\sigma}\in \mathcal{S}'_h$.
By direct computations we have
\begin{equation*}
\begin{split}
\dot{\Delta}_j|x|^{-\sigma}&=2^{3j}\int_{\mathbb{R}^3}h\big(2^j(x-y)\big)
|y|^{-\sigma}\mathrm{d}y \\
&=2^{j\sigma}\int_{\mathbb{R}^3}h(2^jx-2^jy)
|2^jy|^{-\sigma}\mathrm{d}2^jy  \\
&=2^{j\sigma}\widetilde{h}(2^jx),
\end{split}
\end{equation*}
where $\widetilde{h}(x)=\int_{\mathbb{R}^3}h(x-y)
|y|^{-\sigma}\mathrm{d}y$.
Since $(\mathcal{F}h)(\xi)=\varphi(\xi)\in\mathcal{D}(\mathbb{R}^3/\{0\})$, we obtain $\mathcal{F}\widetilde{h}\in\mathcal{D}(\mathbb{R}^3)$. This implies $\|\widetilde{h}\|_{L^2}<+\infty$.
From above analysis, we easily get
$$\|\dot{\Delta}_j|x|^{-\sigma}\|_{L^2}=2^{j(\sigma-\frac{3}{2})}\|
\widetilde{h}\|_{L^2}.$$ Combining with the definition of the Besov space and $\mathrm{supp}\chi_p\subset B(0,p)$, we derive that
\begin{equation*}
\begin{split}
\|\chi_p(D)|x|^{-\sigma}\|_{\dot{B}^{\frac{1}{2}}_{2,1}}\lesssim\sum_{j\leq N_p} 2^{j(\sigma-1)}\|\widetilde{h}\|_{L^2}<\infty.
\end{split}
\end{equation*}
However, it is obvious that
\begin{equation*}
\|\chi_p(D)|x|^{-\sigma}\|_{L^2}=
\|\chi_p(\xi)|\xi|^{\sigma-3}\|_{L^2}=\infty.
\end{equation*}
\end{proof}

\subsection{Hybrid Besov spaces}\label{Hybrid}

Let us now introduce the hybrid Besov spaces we will work in this paper.
\begin{definition}\label{define of hybrid}
\cite{R.DANCHIN2001} Let $s,t\in\mathbb{R}$. We set
$$\|u\|_{\dot{B}^{s,t}}=\sum_{j\leq0}2^{js}\|\dot{\Delta}_ju\|_{L^2}
+\sum_{j>0}2^{jt}\|\dot{\Delta}_ju\|_{L^2}.$$
We then define the space $\dot{B}^{s,t}\triangleq\{u\in\mathcal{S}'_h,\, \|u\|_{\dot{B}^{s,t}}<\infty\}.$
\end{definition}
\noindent We will often use the definition:
$$\|u\|^l_{\dot{B}^s_{p,1}}\triangleq\sum_{j\leq0}2^{js}\|\dot{\Delta}_ju\|_p;\ \ \ \ \|u\|^h_{\dot{B}^s_{p,1}}\triangleq\sum_{j>0}2^{js}\|\dot{\Delta}_ju\|_p.$$
We also define the norm of the space $L^r_T(\dot{B}^{s,t})$
\begin{equation*}
\begin{split}
&\|u\|_{L^r_T(\dot{B}^{s,t})}=\Big(\int_0^T\|u\|_{\dot{B}^{s,t}}^r
\mathrm{d}t\Big)^{\frac{1}{r}},
\end{split}
\end{equation*}with the usual change if $r=\infty$.
Furthermore, the norm of the time-space Besov space
$\widetilde{L}^r_T(\dot{B}^{s,t})$ is defined by
\begin{equation*}
\begin{split}
&\|u\|_{\widetilde{L}^r_T(\dot{B}^{s,t})}=\sum_{j\leq 0}2^{js}\|\dot{\Delta}_ju\|_{L^r_TL^2}+\sum_{j> 0}2^{js}\|\dot{\Delta}_ju\|_{L^r_TL^2}.
\end{split}
\end{equation*}
\begin{remark}\label{hua}
(i) \ By the Minkowski inequality, we easily find that $\widetilde{L}^1_T(\dot{B}^{s,t})=L^1_T(\dot{B}^{s,t})$ and $\widetilde{L}^r_T(\dot{B}^{s,t})\subseteq L^r_T(\dot{B}^{s,t})$ for $r>1$.\\
(ii)\  The properties of Proposition \ref{BONY} for product remain
true for the time-space Besov spaces.
\end{remark}
The following Proposition is a direct consequence of the definition of Besov space.
\begin{proposition}\cite{R.DANCHIN2001}
(i)\ We have $\dot{B}^{s,s}=\dot{B}^s_{2,1}$.\\
(ii) If $s\leq t$ then $\dot{B}^{s,t}=\dot{B}^s_{2,1}\cap\dot{B}^t_{2,1}$. Otherwise, $\dot{B}^{s,t}=\dot{B}^s_{2,1}+\dot{B}^t_{2,1}$.\\
(iii) If $s_1\leq s_2$ and $t_1\geq t_2$, then $\dot{B}^{s_1,t_1}\hookrightarrow\dot{B}^{s_2,t_2}$.
\end{proposition}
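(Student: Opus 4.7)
The plan is to dispatch (i), (ii), (iii) by direct comparison of the dyadic series entering the definitions of $\|\cdot\|_{\dot{B}^{s,t}}$ and $\|\cdot\|_{\dot{B}^s_{2,1}}$, exploiting the opposite monotonicity of $j\mapsto 2^{jr}$ on $\{j\le 0\}$ versus $\{j>0\}$.

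For (i), when $s=t$ the two half-sums defining $\|u\|_{\dot{B}^{s,s}}$ simply reassemble into $\sum_{j\in\ZZ}2^{js}\|\dot{\Delta}_j u\|_{L^2}=\|u\|_{\dot{B}^s_{2,1}}$. There is nothing beyond unwinding Definition \ref{define of hybrid}.

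For (ii) with $s\le t$, the key observation is $2^{js}\le 2^{jt}$ for $j>0$ and $2^{jt}\le 2^{js}$ for $j\le 0$. Starting from $u\in\dot{B}^{s,t}$, I would bound the missing half-sum $\sum_{j>0}2^{js}\|\dot{\Delta}_ju\|_{L^2}\le\sum_{j>0}2^{jt}\|\dot{\Delta}_ju\|_{L^2}\le\|u\|_{\dot{B}^{s,t}}$, to obtain $\|u\|_{\dot{B}^s_{2,1}}\lesssim\|u\|_{\dot{B}^{s,t}}$, and symmetrically $\|u\|_{\dot{B}^t_{2,1}}\lesssim\|u\|_{\dot{B}^{s,t}}$; the reverse estimate $\|u\|_{\dot{B}^{s,t}}\le\|u\|_{\dot{B}^s_{2,1}}+\|u\|_{\dot{B}^t_{2,1}}$ is immediate. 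For the case $s>t$, given $u\in\dot{B}^{s,t}$ I would split $u=u_l+u_h$ with $u_l=\sum_{j\le 0}\dot{\Delta}_ju$ and $u_h=\sum_{j>0}\dot{\Delta}_ju$, the series converging in $\mathcal{S}'_h$ by Remark \ref{Sh} and the hypothesis $u\in\mathcal{S}'_h$; by the almost-orthogonality in \eqref{cancellations}, $\dot{\Delta}_j u_l$ vanishes for $j\ge 2$ and $\dot{\Delta}_j u_h$ vanishes for $j\le -1$, so $\|u_l\|_{\dot{B}^s_{2,1}}$ reduces to a finite sum controlled by $\|u\|_{\dot{B}^{s,t}}$, and likewise for $\|u_h\|_{\dot{B}^t_{2,1}}$. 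The reverse inclusion $\dot{B}^s_{2,1}+\dot{B}^t_{2,1}\hookrightarrow\dot{B}^{s,t}$ is obtained by taking any decomposition $u=v+w$ and invoking $2^{js}\le 2^{jt}$ for $j\le 0$ (since $s>t$) on the low-frequency tail of $w$, together with $2^{jt}\le 2^{js}$ for $j>0$ on the high-frequency tail of $v$, to control each contribution.

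For (iii), I would compare the two hybrid norms blockwise. For $j\le 0$, $s_1\le s_2$ gives $2^{js_2}\le 2^{js_1}$; for $j>0$, $t_2\le t_1$ gives $2^{jt_2}\le 2^{jt_1}$. Summing the two resulting term-by-term inequalities yields $\|u\|_{\dot{B}^{s_2,t_2}}\le\|u\|_{\dot{B}^{s_1,t_1}}$, i.e.\ the continuous embedding.

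The only mildly delicate point is the sum identification in (ii) when $s>t$: one must check that the low/high split $u=u_l+u_h$ is a genuine element of $\dot{B}^s_{2,1}+\dot{B}^t_{2,1}$, which amounts to handling the few boundary dyadic blocks near $j=0$ via \eqref{cancellations} and invoking the $\mathcal{S}'_h$ framework so that each partial sum converges as a tempered distribution. Everything else is a routine bookkeeping of the dyadic definition.
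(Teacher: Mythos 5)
Your proof is correct and is exactly the argument the paper has in mind: the paper offers no proof, declaring the proposition ``a direct consequence of the definition'' (citing Danchin), and your blockwise comparison of the weights $2^{js}$ versus $2^{jt}$ on $\{j\le 0\}$ and $\{j>0\}$, together with the low/high splitting $u=u_l+u_h$ for the sum space, is precisely that direct consequence spelled out. One cosmetic point: in the case $s>t$ the norm $\|u_l\|_{\dot{B}^s_{2,1}}$ is still an infinite sum over $j\le 1$ (it is each block $\dot{\Delta}_j u_l$ that involves only finitely many $\dot{\Delta}_k u$), but the resulting bound $\|u_l\|_{\dot{B}^s_{2,1}}\lesssim\sum_{j\le 0}2^{js}\|\dot{\Delta}_j u\|_{L^2}$ holds as you intend, so nothing is affected.
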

For the estimates of the product of two temperate distributions $u$
and $v$ in hybrid Besov spaces, we refer to the Appendix in section
\ref{Appendix}.
\subsection{Smoothing properties for the linear heat equation}
\begin{proposition}\label{smoothing}
\cite{H.BAHOURI} Let $s\in\mathbb{R}, (p,r)\in[1,+\infty]^2$,
$u_0\in\dot{B}^{s-1}_{p,r}$ and
$f\in\widetilde{L}^1_T(\dot{B}^{s-1}_{p,r})$. Let $u$ solve
\begin{equation}
\begin{cases}
\partial_t u -\mu\Delta u=f,\\
u(0,x)=u_0.
\end{cases}
\end{equation}
Then $u\in \widetilde{L}^{\infty}_T(\dot{B}^{s-1}_{p,r})\cap \widetilde{L}^1_T(\dot{B}^{s+1}_{p,r})$ and the following estimate holds:
$$\|u\|_{\widetilde{L}^\infty_T(\dot{B}^{s-1}_{p,r})}+
\mu\|u\|_{\widetilde{L}^1_T(\dot{B}^{s+1}_{p,r})}\leq\|u_0\|_{\dot{B}^{s-1}_{p,r}}
+C\|f\|_{\widetilde{L}^1_T(\dot{B}^{s-1}_{p,r})}.$$
If in addition $r<+\infty$, then $u\in C_b([0,T);\dot{B}^{s-1}_{p,r})$.
\end{proposition}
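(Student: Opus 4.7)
The plan is to reduce the estimate to a block-by-block analysis in frequency and then sum in $\ell^r$. First, apply $\dot{\Delta}_j$ to the heat equation. Since $\dot{\Delta}_j$ commutes with $\partial_t$ and with $\Delta$, the block $u_j \triangleq \dot{\Delta}_j u$ satisfies
\[
\partial_t u_j - \mu \Delta u_j = \dot{\Delta}_j f, \qquad u_j|_{t=0} = \dot{\Delta}_j u_0,
\]
and $u_j$ is spectrally supported in an annulus $\{A_1 2^j \leq |\xi| \leq A_2 2^j\}$. The key point is that, on such spectrally localized functions, the dissipation $-\mu\Delta$ acts, up to a universal constant, like the multiplier $c\mu 2^{2j}$. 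I would quantify this by the Bernstein-type positivity lemma
\[
\int_{\mathbb{R}^d} (-\Delta v)\, |v|^{p-2} v \, dx \;\geq\; c\, 2^{2j}\, \|v\|_{L^p}^p,
\]
valid for all $v$ spectrally supported in $\{A_1 2^j \leq |\xi|\leq A_2 2^j\}$ and all $p\in[1,\infty]$.

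Testing the localized equation against $|u_j|^{p-2} u_j$, integrating in $x$ and using H\"older's inequality on the right-hand side gives
\[
\frac{d}{dt}\|u_j\|_{L^p} + c\mu 2^{2j}\|u_j\|_{L^p} \;\leq\; \|\dot{\Delta}_j f\|_{L^p},
\]
so Duhamel/Gr\"onwall yields
\[
\|u_j(t)\|_{L^p} \;\leq\; e^{-c\mu 2^{2j} t}\|\dot{\Delta}_j u_0\|_{L^p} + \int_0^t e^{-c\mu 2^{2j}(t-s)} \|\dot{\Delta}_j f(s)\|_{L^p}\, ds.
\]
Taking $\sup_{t\in[0,T]}$ immediately gives the $L^\infty_T L^p$ bound $\|u_j\|_{L^\infty_T L^p} \leq \|\dot{\Delta}_j u_0\|_{L^p} + \|\dot{\Delta}_j f\|_{L^1_T L^p}$. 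For the parabolic gain, multiply by $\mu 2^{2j}$ and integrate in time; Young's convolution inequality in $t$, applied with the kernel $\mu 2^{2j} e^{-c\mu 2^{2j} t}$ of $L^1_t$-norm $c^{-1}$, produces $\mu 2^{2j}\|u_j\|_{L^1_T L^p} \leq c^{-1}(\|\dot{\Delta}_j u_0\|_{L^p} + \|\dot{\Delta}_j f\|_{L^1_T L^p})$.

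It remains to multiply both block inequalities by $2^{j(s-1)}$ and take the $\ell^r_j$-norm; by the definition of the Chemin--Lerner space $\widetilde{L}^q_T(\dot{B}^{\sigma}_{p,r})$, the right-hand sides become $\|u_0\|_{\dot{B}^{s-1}_{p,r}} + C\|f\|_{\widetilde{L}^1_T(\dot{B}^{s-1}_{p,r})}$, as claimed. The continuity statement when $r<\infty$ follows by a standard density argument: for smooth compactly supported data each $u_j$ lies in $C_b$, and the estimate just proved shows that the partial sums $\sum_{|j|\leq N}\dot{\Delta}_j u$ converge to $u$ uniformly in $t$ in $\dot{B}^{s-1}_{p,r}$, because the $\ell^r$-tail goes to zero when $r<\infty$.

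The main obstacle is the Bernstein positivity lemma for $p\neq 2$: for $p=2$ it is just Plancherel together with $|\xi|^2\gtrsim 2^{2j}$ on the support of $\widehat{u_j}$, but for general $p$ one needs a pointwise heat-kernel representation $e^{t\Delta}u_j = K_{j,t}*u_j$ with explicit estimates on $K_{j,t}$ in $L^1$, or equivalently a careful commutator/mollifier computation; this is the delicate input which I would quote from Bahouri--Chemin--Danchin rather than reprove.
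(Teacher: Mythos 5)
The paper does not prove this proposition; it is quoted verbatim from Bahouri--Chemin--Danchin, and your argument is precisely the standard proof given there: frequency localization, the smoothing effect of the heat flow on an annulus, Duhamel with the decaying exponential, Young's inequality in time, and then the $\ell^r_j$ summation that the Chemin--Lerner norms are designed for. Your one flagged caveat is the right one --- the positivity lemma $\int(-\Delta v)\,|v|^{p-2}v\,dx\gtrsim 2^{2j}\|v\|_{L^p}^p$ only makes literal sense for $p\in(1,\infty)$ (really $p\ge 2$ without extra care), and for the endpoints $p\in\{1,\infty\}$ one must instead use the kernel bound $\|e^{\mu t\Delta}\dot{\Delta}_j g\|_{L^p}\le Ce^{-c\mu 2^{2j}t}\|\dot{\Delta}_j g\|_{L^p}$, which you correctly defer to the cited reference; with that substitution the same Duhamel computation goes through for all $(p,r)\in[1,\infty]^2$, and the rest of your argument, including the continuity statement for $r<\infty$, is sound.
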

The following two Propositions are used for the proof of the
uniqueness.
\begin{proposition}\label{transport}
\cite{H.BAHOURI} Let $(p,r)\in [1,+\infty]^2$ and
$s\in\big(-\min(\frac{d}{p},\frac{d}{p'});1+\frac{d}{p}\big)$. Let
$u$ be a vector field such that $\nabla u$ belongs to
$L^1(0,T;\dot{B}^{\frac{d}{p}}_{p,r}\cap L^{\infty})$. Suppose that
$f_0\in\dot{B}^s_{p,r}, F\in L^1(0,T;\dot{B}^s_{p,r})$ and that
$f\in L^{\infty}(0,T;\dot{B}^s_{p,r})\cap C([0,T];\mathcal{S}')$
solves
\begin{equation*}
\begin{cases}
\partial_tf+u\cdot\nabla f=F,\\
f(0,x)=f_0.
\end{cases}
\end{equation*}
Let $V(t)\triangleq\int_0^t\|\nabla u\|_{\dot{B}^{\frac{d}{p}}_{p,r}\cap L^{\infty}}\mathrm{d}t'$. There exists a constant $C$ depending only on $s,p$ and $d$, and such that the following inequality holds true for $t\in[0,T]$
\begin{equation}
\|f\|_{\widetilde{L}_t^\infty(\dot{B}^s_{p,r})}\leq e^{CV(t)}\Big( \|f_0\|_{\dot{B}^s_{p,r}}+\int_0^t e^{-CV(t)}\|F(t')\|_{\dot{B}^s_{p,r}}\mathrm{d}t'\Big).
\end{equation}
If $r<+\infty$ then $f$ belongs to $C([0,T];\dot{B}^s_{p,r})$.
\end{proposition}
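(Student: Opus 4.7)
The plan is to prove Proposition \ref{transport} by the standard Littlewood--Paley/commutator method: localize the equation in frequency, obtain an $L^p$ energy inequality for each dyadic block that features a commutator error, estimate that commutator in Besov norms via Bony's decomposition, and close by Gr\"onwall's lemma.

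First I would apply $\dot\Delta_j$ to the equation and rewrite it as
\begin{equation*}
\partial_t \dot\Delta_j f + u\cdot\nabla\dot\Delta_j f \;=\; \dot\Delta_j F + R_j,\qquad R_j \triangleq [u\cdot\nabla,\,\dot\Delta_j]f.
\end{equation*}
Multiplying by $|\dot\Delta_j f|^{p-2}\dot\Delta_j f$ and integrating, the transport term yields a boundary contribution of the form $\tfrac{1}{p}\int(\mathrm{div}\,u)|\dot\Delta_j f|^p\,dx$, so that
\begin{equation*}
\frac{d}{dt}\|\dot\Delta_j f\|_{L^p} \;\leq\; \tfrac{1}{p}\|\mathrm{div}\,u\|_{L^\infty}\|\dot\Delta_j f\|_{L^p} + \|\dot\Delta_j F\|_{L^p} + \|R_j\|_{L^p}.
\end{equation*}
Integrating in time on $[0,t]$ gives a pointwise-in-$j$ control of $\|\dot\Delta_j f(t)\|_{L^p}$ by the initial datum, the source, and the accumulated commutator.

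The heart of the argument is the commutator estimate
\begin{equation*}
\bigl\|\,2^{js}\|R_j\|_{L^p}\,\bigr\|_{\ell^r_j} \;\lesssim\; \|\nabla u\|_{\dot B^{d/p}_{p,r}\cap L^\infty}\,\|f\|_{\dot B^s_{p,r}},
\end{equation*}
valid exactly in the range $s\in\bigl(-\min(d/p,d/p');\,1+d/p\bigr)$. To prove it I would decompose $u\cdot\nabla f$ via Bony's paraproduct as $T_u\nabla f + T_{\nabla f}u + R(u,\nabla f)$ and analyze $[\dot\Delta_j,\cdot]$ applied to each piece. For the paraproduct $T_u\nabla f$, a first-order Taylor expansion of the convolution kernel $h(2^j\cdot)$ yields a gain of $2^{-j}$ that compensates the $\nabla$ and produces $\|\nabla u\|_{L^\infty}\|\dot S_{j'-1}\nabla f\|_{L^p}$ type bounds, which sum correctly in $\ell^r$. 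The symmetric paraproduct $T_{\nabla f}u$ and the remainder $R(u,\nabla f)$ are handled directly (no commutator gain is needed) using $\nabla u\in\dot B^{d/p}_{p,r}$; the upper restriction $s<1+d/p$ comes from the $T_{\nabla f}u$ term and the lower restriction $s>-\min(d/p,d/p')$ from the remainder.

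Once the commutator estimate is in hand, I multiply the block inequality by $2^{js}$, take $\ell^r_j$ norm, and absorb $\|\mathrm{div}\,u\|_{L^\infty}\leq\|\nabla u\|_{L^\infty}$ into the $V(t)$ term. This yields
\begin{equation*}
\|f(t)\|_{\dot B^s_{p,r}} \;\leq\; \|f_0\|_{\dot B^s_{p,r}} + \int_0^t\!\|F(t')\|_{\dot B^s_{p,r}}\,dt' + C\!\int_0^t\!V'(t')\,\|f(t')\|_{\dot B^s_{p,r}}\,dt',
\end{equation*}
and Gr\"onwall's lemma produces the stated inequality with the factor $e^{CV(t)}$. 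Taking $\tilde L^\infty$ in time by keeping the $\ell^r$ norm outside the supremum gives the version in $\widetilde L^\infty_t(\dot B^s_{p,r})$, and the continuity in time when $r<\infty$ follows by a standard density argument approximating $(f_0,F)$ by smooth data for which $f$ is classical. The main obstacle is the commutator estimate above; once it is established with the sharp range on $s$, the rest of the proof is a routine energy/Gr\"onwall argument.
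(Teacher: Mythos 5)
Your outline is correct and is precisely the standard proof of this cited result (Theorem 3.14 and Lemma 2.100 in Bahouri--Chemin--Danchin): frequency localization, $L^p$ energy estimate with the $\frac1p\|\operatorname{div}u\|_{L^\infty}$ term, the commutator estimate via Bony decomposition with exactly the stated range of $s$, and Gr\"onwall. The paper itself offers no proof (it only cites \cite{H.BAHOURI}), so there is nothing to compare beyond noting that your argument matches the reference; the only technicality you gloss over is that for $p=\infty$ the energy identity must be replaced by a limiting or characteristics argument.
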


\begin{proposition}\label{log}
\cite{Chen,R.DANCHIN2005} Let $s\in\mathbb{R}$. Then for any $1\leq
p,r\leq+\infty$ and $0<\epsilon\leq1$, we have
$$\|f\|_{\widetilde{L}^r_T(\dot{B}^s_{p,1})}\leq C\frac{\|f\|_{\widetilde{L}^r_T(\dot{B}^s_{p,\infty})}}{\epsilon}
\log\Big( e+\frac{\|f\|_{\widetilde{L}^r_T(\dot{B}^{s-\epsilon}_{p,\infty})}+
\|f\|_{\widetilde{L}^r_T(\dot{B}^{s+\epsilon}_{p,\infty})}}{\|f\|_{\widetilde{L}^r_T
(\dot{B}^s_{p,\infty})}}  \Big).$$
\end{proposition}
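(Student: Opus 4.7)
The plan is to prove this logarithmic interpolation inequality by the classical dyadic decomposition trick, splitting the Littlewood--Paley sum defining the $\widetilde{L}^r_T(\dot{B}^s_{p,1})$ norm into a ``middle'' band and high/low-frequency tails governed by an integer parameter $N$, and then optimizing $N$.

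First I would fix an integer $N \ge 1$ to be chosen, and write
\[
\|f\|_{\widetilde{L}^r_T(\dot{B}^s_{p,1})}
= \sum_{|j| \le N} 2^{js}\|\dot{\Delta}_j f\|_{L^r_T L^p}
+ \sum_{j > N} 2^{js}\|\dot{\Delta}_j f\|_{L^r_T L^p}
+ \sum_{j < -N} 2^{js}\|\dot{\Delta}_j f\|_{L^r_T L^p}.
\]
Setting $A := \|f\|_{\widetilde{L}^r_T(\dot{B}^s_{p,\infty})}$, the middle sum is bounded directly by $(2N+1) A$. For the high-frequency tail I would factor $2^{js} = 2^{-j\epsilon} 2^{j(s+\epsilon)}$ and absorb the second factor into the $\dot{B}^{s+\epsilon}_{p,\infty}$ norm; the remaining geometric series yields
\[
\sum_{j > N} 2^{-j\epsilon} \le \frac{2^{-N\epsilon}}{1 - 2^{-\epsilon}} \le \frac{C \cdot 2^{-N\epsilon}}{\epsilon}
\]
for $\epsilon \in (0, 1]$. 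The low-frequency tail is handled symmetrically using $\dot{B}^{s-\epsilon}_{p,\infty}$, so collecting terms gives
\[
\|f\|_{\widetilde{L}^r_T(\dot{B}^s_{p,1})} \le (2N+1) A + \frac{C \cdot 2^{-N\epsilon}}{\epsilon} B,
\]
where $B := \|f\|_{\widetilde{L}^r_T(\dot{B}^{s-\epsilon}_{p,\infty})} + \|f\|_{\widetilde{L}^r_T(\dot{B}^{s+\epsilon}_{p,\infty})}$.

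To finish, I would balance the two terms by choosing $N := \lceil (\epsilon \log 2)^{-1} \log(e + B/A) \rceil$, which gives $2^{-N\epsilon} \le (e + B/A)^{-1}$ and $N \lesssim \epsilon^{-1} \log(e + B/A)$. The first term is then controlled by $(C/\epsilon)\, A \log(e + B/A)$; the second becomes $(C/\epsilon)\, B\,(e + B/A)^{-1} \le (C/\epsilon) A$, which is absorbed into the first since $\log(e + B/A) \ge 1$. This yields exactly the claimed bound.

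The only real subtlety lies in the bookkeeping: one must track where the two $1/\epsilon$ factors originate (one from the geometric-series constant $(1-2^{-\epsilon})^{-1}$, effectively cancelled by the choice of $N$, and one surviving in front of the $\log$), and verify that the tail estimates work directly on the $\widetilde{L}^r_T$-norm. The latter is immediate because $\widetilde{L}^r_T(\dot{B}^{s}_{p,q})$ is defined with the $\ell^q$ sum taken \emph{outside} the $L^r_T L^p$ integration, so the above manipulation happens block by block in the $L^r_T L^p$ norm and no Minkowski-type exchange is needed.
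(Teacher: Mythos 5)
Your argument is correct and is precisely the standard proof of this logarithmic interpolation inequality from the cited references \cite{Chen,R.DANCHIN2005}; the paper itself states the proposition without proof. The splitting into a middle band plus $\epsilon$-weighted geometric tails, the choice $N\sim\epsilon^{-1}\log(e+B/A)$, and the observation that the $\ell^1$ summation in $\widetilde{L}^r_T(\dot{B}^s_{p,1})$ sits outside the $L^r_TL^p$ norm (so no Minkowski exchange is needed) are exactly the intended ingredients; the only cosmetic addition would be to dispose of the degenerate case $\|f\|_{\widetilde{L}^r_T(\dot{B}^s_{p,\infty})}=0$, where $f$ vanishes and the inequality is trivial.
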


\section{Priori estimate}\label{PRIORI}

\setcounter{section}{3}\setcounter{equation}{0}

Recalling \eqref{el} and \eqref{eh}, we denote
\begin{equation}\label{energytotal}
\mathcal{E}(t)\triangleq\mathcal{E}^l(t)+\mathcal{E}^h(t).
\end{equation} We  have the following priori estimate.
\begin{proposition}\label{pioriestimate}
Let $d\geq2$ and $a=0$. Assume that the system \eqref{OBA} has a solution $(\tau,u)$ on $[0,T)$. Then, there exist two positive constants $C_1, C_2$ independent of $T$ such that
\begin{equation}\label{DD}
\mathcal{E}(t)\leq C_1\mathcal{E}_0+C_2\mathcal{E}^2(t),
\end{equation}
where
$\mathcal{E}_0=\|u_0\|_{\dot{B}^{\frac{d}{2}-1}_{2,1}}+\|\tau_0\|_
{\dot{B}^{\frac{d}{2}-1,\frac{d}{2}}}$.

\end{proposition}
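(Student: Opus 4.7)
The plan is to treat the system \eqref{OBA} as a nonlinear perturbation of the linearized system \eqref{OBLG}, apply the Green matrix representation \eqref{G} frequency-by-frequency (via $\dot\Delta_j$), and add an independent transport-type estimate for $\tau$ itself. The source terms in \eqref{OBLG} are $\mathbb{P}E=-\mathbb{P}(u\cdot\nabla u)$ and $\Lambda^{-1}\mathbb{P}\nabla\!\cdot\! F$ with $F=-u\cdot\nabla\tau-Q(\tau,\nabla u)$. The Green matrix behaves very differently on low and high frequencies, so I will split at $j=0$ and build the two pieces $\mathcal{E}^l(t)$ and $\mathcal{E}^h(t)$ separately, then sum.

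In the low-frequency regime $j\leq 0$ (equivalently $|\xi|\lesssim 1$), the discriminant $\nu^2|\xi|^4-2\mu_1\mu_2|\xi|^2$ is negative, so $\lambda_\pm$ are complex conjugates with real part $-\nu|\xi|^2/2$. Every entry of $\hat{\mathcal{G}}(\xi,t)$ therefore decays like $e^{-c|\xi|^2 t}$ with only bounded oscillatory prefactors, which yields full parabolic smoothing for both $u$ and $v:=\Lambda^{-1}\mathbb{P}\nabla\!\cdot\!\tau$. Applying this pointwise-in-Fourier estimate on each block $\dot\Delta_j$ and summing in $j\leq 0$, I obtain control of $\sup_t\|(u,v)\|^l_{\dot{B}^{d/2-1}_{2,1}}$ and of $\int_0^t\|(u,v)\|^l_{\dot{B}^{d/2+1}_{2,1}}\mathrm{d}t'$ by the initial data plus the $L^1_t\dot{B}^{d/2-1}_{2,1}$ norms of the low-frequency source terms. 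In the high-frequency regime $j>0$, an expansion of the square root shows $\lambda_+\to-\mu_1\mu_2/\nu$ (pure exponential damping, no derivative gain) and $\lambda_-\sim-\nu|\xi|^2$ (parabolic). Diagonalising $\hat{\mathcal{G}}$ mode by mode, $u$ retains full parabolic smoothing while $\mathbb{P}\nabla\!\cdot\!\tau=\Lambda v$ only enjoys the bounded damping rate; this is precisely why $\mathcal{E}^h(t)$ records $\mathbb{P}\nabla\!\cdot\!\tau$ in $L^1_t\dot{B}^{d/2-1}_{2,1}$ rather than $L^1_t\dot{B}^{d/2+1}_{2,1}$.

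The energies $\mathcal{E}^l$ and $\mathcal{E}^h$ also demand $L^\infty_t$ control on $\tau$ itself (not merely $\mathbb{P}\nabla\!\cdot\!\tau$), which the Green matrix does not supply since $\tau$ lacks a damping term. For this I use the transport structure $\tau_t+u\cdot\nabla\tau=\mu_2 D(u)-Q(\tau,\nabla u)$ and invoke Proposition \ref{transport} at regularity $\dot{B}^{d/2-1}_{2,1}$ on low frequencies and $\dot{B}^{d/2}_{2,1}$ on high frequencies. The exponential factor $e^{CV(t)}$ is controlled by $V(t)\lesssim\int_0^t\|\nabla u\|_{\dot{B}^{d/2}_{2,1}}\mathrm{d}t'\lesssim\mathcal{E}(t)$, which will be absorbed in the small-data bootstrap since we are proving a quadratic bound in $\mathcal{E}(t)$. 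The forcing $\mu_2 D(u)$ is bounded by $\|u\|^l_{L^1_t\dot{B}^{d/2}_{2,1}}\lesssim\|u\|^l_{L^1_t\dot{B}^{d/2+1}_{2,1}}\leq\mathcal{E}^l(t)$ on low frequencies, and by $\|u\|^h_{L^1_t\dot{B}^{d/2+1}_{2,1}}\leq\mathcal{E}^h(t)$ on high frequencies.

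The remaining task is to estimate each nonlinear source in the correct hybrid Besov norm by $\mathcal{E}^2(t)$, using Proposition \ref{BONY} together with the hybrid-space product rules from the Appendix. The terms $u\cdot\nabla u$ and $Q(\tau,\nabla u)$ are essentially bilinear products whose two factors each live in one of the pieces of $\mathcal{E}(t)$, so the product rule closes them directly. The genuinely delicate term is $u\cdot\nabla\tau$ when it appears in the $v$-equation, because at low frequencies we only control $\tau$ in $\dot{B}^{d/2-1}_{2,1}$ (no smoothing in time) and this is too weak to dominate the loss of a derivative; following the hint from \cite{zhuyi}, I rewrite
\[
\Lambda^{-1}\mathbb{P}\nabla\!\cdot\!(u\cdot\nabla\tau)=\Lambda^{-1}\mathbb{P}\bigl(u\cdot\nabla\mathbb{P}\nabla\!\cdot\!\tau\bigr)+\Lambda^{-1}\mathbb{P}(\text{commutator terms in }\nabla u),
\]
so that the derivative on $\tau$ is transferred onto the projection $\mathbb{P}\nabla\!\cdot\!\tau$, which is controlled by $\mathcal{E}(t)$ through the Green-matrix estimates. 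This rewriting, combined with the parabolic smoothing on $u$, is what allows the estimate to close. The principal obstacle of the proof is exactly this: the tensor $\tau$ has no damping on its own, and a naive estimate of $u\cdot\nabla\tau$ against $\|\tau\|^l_{\dot{B}^{d/2-1}_{2,1}}$ would cost a derivative that nothing can recover; reshuffling the nonlinearity onto $\mathbb{P}\nabla\!\cdot\!\tau$ and carefully matching regularities across the $j=0$ threshold (using the hybrid-space embedding $\dot{B}^{s,t}\hookrightarrow\dot{B}^{s',t'}$) is the technical heart of the argument. Once all nonlinear terms are bounded by $\mathcal{E}^2(t)$, summing the low- and high-frequency pieces gives $\mathcal{E}(t)\leq C_1\mathcal{E}_0+C_2\mathcal{E}^2(t)$.
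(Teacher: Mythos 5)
Your overall architecture matches the paper's: split at $j=0$, parabolic smoothing for $(u,\Lambda^{-1}\mathbb{P}\nabla\cdot\tau)$ in low frequencies, damping for $\mathbb{P}\nabla\cdot\tau$ in high frequencies, a supplementary estimate for $\tau$ itself, and the rewriting of $\mathbb{P}\nabla\cdot(u\cdot\nabla\tau)$ via Proposition \ref{Profile} to close the delicate convection term. Your use of the explicit Green matrix for the linear part is a legitimate alternative to what the paper actually does (the paper only uses $\hat{\mathcal{G}}$ as motivation; its proof runs through dyadic $L^2$ energy functionals with cross terms such as $\frac{\mu_2}{2}\|\dot{\Delta}_ju\|^2_{L^2}+\mu_1\|\dot{\Delta}_j\Lambda^{-1}\mathbb{P}\nabla\cdot\tau\|^2_{L^2}-2K_1(\dot{\Delta}_j\mathbb{P}\nabla\cdot\tau,\dot{\Delta}_ju)$), and either route yields the same linear decay.

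There is, however, a genuine gap in your treatment of $\sup_t\|\tau\|^l_{\dot{B}^{d/2-1}_{2,1}}$. Running Proposition \ref{transport} on the $\tau$-equation at low frequencies forces you to put the linear forcing $\mu_2 D(u)$ into $L^1_t(\dot{B}^{d/2-1}_{2,1})$, i.e.\ to control $\int_0^t\|u\|^l_{\dot{B}^{d/2}_{2,1}}\mathrm{d}t'$. Your claimed bound $\|u\|^l_{\dot{B}^{d/2}_{2,1}}\lesssim\|u\|^l_{\dot{B}^{d/2+1}_{2,1}}$ is false: for $j\leq0$ one has $2^{jd/2}\geq 2^{j(d/2+1)}$, so the inequality between the low-frequency norms goes the other way, and interpolation against $\sup_t\|u\|^l_{\dot{B}^{d/2-1}_{2,1}}$ only produces a factor $t^{1/2}$, which destroys uniformity in $T$. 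This quantity is simply not controlled by $\mathcal{E}(t)$. The paper avoids estimating the linear coupling altogether by exploiting the exact antisymmetry \eqref{cancellation2}, $(\dot{\Delta}_j\nabla\cdot\tau,\dot{\Delta}_ju)+(\dot{\Delta}_jD(u),\dot{\Delta}_j\tau)=0$: forming $\mu_2\times$($u$-energy)$+\mu_1\times$($\tau$-energy) makes the terms $\mu_1\mu_2(\dot{\Delta}_j\nabla\cdot\tau,\dot{\Delta}_ju)$ and $\mu_1\mu_2(\dot{\Delta}_jD(u),\dot{\Delta}_j\tau)$ cancel identically, leaving only nonlinear contributions. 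A secondary defect of the transport route is the factor $e^{CV(t)}$ with $V(t)\lesssim\mathcal{E}(t)$: this yields $\mathcal{E}(t)\lesssim e^{C\mathcal{E}(t)}(\mathcal{E}_0+\mathcal{E}^2(t))$, which is not the stated form \eqref{DD} with constants independent of the size of $\mathcal{E}(t)$; deferring its absorption to the bootstrap changes the statement being proved.
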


Before the proof of the priori estimate, let us introduce two important Lemmas.

First, we show the following rough energy estimate in frequency
spaces. One refers to \eqref{elr} and \eqref{ehr} for the
definitions of $\mathcal{E}^h_r(t)$ and $\mathcal{E}^l_r(t)$.

\begin{lemma}\label{originalLemma}
Let $d\geq2, a=0$ and $(u,\tau)$ be the solution of system \eqref{OBA} on $[0,T)$.
The following estimate holds
\begin{equation*}
\begin{split}
\mathcal{E}^l_r(t)+\mathcal{E}^h_r(t)\leq& C_1\mathcal{E}_0+ C_2\int_0^t \sum_{j\leq 0}2^{j(\frac{d}{2}-1)}\tilde{E}_j/(\|\dot{\Delta}_j u\|_{L^2}+\|\dot{\Delta}_j\Lambda^{-1}\mathbb{P}\nabla\cdot\tau\|_{L^2})\mathrm{d}t'\\
&+C_2\int_0^t\sum_{j>0} 2^{j(\frac{d}{2}-1)}\Big(\tilde{F}_j/(\|\dot{\Delta}_j u\|_{L^2}+\|\dot{\Delta}_j\mathbb{P}\nabla\cdot\tau\|_{L^
2})+\tilde{R}_j/\|\dot{\Delta}_ju\|_{L^2}\Big)\mathrm{d}t',
\end{split}
\end{equation*}
where $C_1, C_2$ independent of $T$.
\end{lemma}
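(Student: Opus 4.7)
The plan is to run a dyadic $L^2$ energy estimate on the linearized system (OBLG), using two different Lyapunov functionals that reflect the parabolic versus purely damped spectral behavior of $\widehat{\mathcal{G}}(\xi,t)$ on low and high frequencies respectively. The nonlinear sources are $\mathbb{P}E=-\mathbb{P}(u\cdot\nabla u)$ and $\Lambda^{-1}\mathbb{P}\nabla\cdot F=-\Lambda^{-1}\mathbb{P}\nabla\cdot(u\cdot\nabla\tau+Q(\tau,\nabla u))$, obtained by applying $\mathbb{P}$ and $\Lambda^{-1}\mathbb{P}\nabla\cdot$ to the first and second equations of (OBA).

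For $j\leq 0$, I would localize by $\dot{\Delta}_j$ and set $u_j=\dot{\Delta}_j u$, $v_j=\dot{\Delta}_j\Lambda^{-1}\mathbb{P}\nabla\cdot\tau$. Standard $L^2$ energy identities on the two equations, combined as $\tfrac{\mu_2}{4\mu_1}\|u_j\|_{L^2}^2+\tfrac12\|v_j\|_{L^2}^2$, cancel the cross coupling $\mu_1\langle\Lambda v_j,u_j\rangle-\tfrac{\mu_2}{2}\langle\Lambda u_j,v_j\rangle$ but only produce parabolic damping on $u_j$. To inject damping on $v_j$ I would, in the spirit of Danchin's work on the compressible Navier--Stokes system, subtract a small multiple $\delta$ of
\begin{align*}
\partial_t\langle u_j,\Lambda v_j\rangle=-\nu\langle\Lambda^2 u_j,\Lambda v_j\rangle+\mu_1\|\Lambda v_j\|_{L^2}^2-\tfrac{\mu_2}{2}\|\Lambda u_j\|_{L^2}^2+(\text{sources}),
\end{align*}
which supplies the missing $\mu_1\|\Lambda v_j\|_{L^2}^2$. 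Since $|\langle u_j,\Lambda v_j\rangle|\lesssim 2^j\|u_j\|_{L^2}\|v_j\|_{L^2}$, for $\delta$ small enough the resulting functional $\mathcal{L}_j^l$ is equivalent to $\|u_j\|_{L^2}^2+\|v_j\|_{L^2}^2$ uniformly in $j\leq 0$, and after absorption of the remainder $\delta\nu\langle\Lambda^2 u_j,\Lambda v_j\rangle$ one obtains $\tfrac{d}{dt}\mathcal{L}_j^l+c(\|\Lambda u_j\|_{L^2}^2+\|\Lambda v_j\|_{L^2}^2)\leq\tilde E_j$. Bernstein converts the damping into $c\,2^{2j}\mathcal{L}_j^l$; taking square roots yields $\tfrac{d}{dt}\sqrt{\mathcal{L}_j^l}+c\,2^{2j}\sqrt{\mathcal{L}_j^l}\lesssim\tilde E_j/(\|u_j\|_{L^2}+\|v_j\|_{L^2})$, and time integration, multiplication by $2^{j(d/2-1)}$, and summation over $j\leq 0$ reproduce the four low-frequency terms of $\mathcal{E}^l_r(t)$.

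For $j>0$ I would repeat the argument with cross functional $\langle u_j,\Lambda^{-1}v_j\rangle$ (now the natural choice since $|\langle u_j,\Lambda^{-1}v_j\rangle|\lesssim 2^{-j}\|u_j\|_{L^2}\|v_j\|_{L^2}$ stays small); this produces constant-rate damping on $\|v_j\|_{L^2}^2$ combined with parabolic damping $2^{2j}\|u_j\|_{L^2}^2$, matching the high-frequency asymptotics $\lambda_+\sim-\mu_1\mu_2/(2\nu)$, $\lambda_-\sim-\nu|\xi|^2$. Passing to $w_j=\Lambda v_j=\dot{\Delta}_j\mathbb{P}\nabla\cdot\tau$ via $\|w_j\|_{L^2}\sim 2^j\|v_j\|_{L^2}$ delivers the suprema of $\|u_j\|_{L^2}$, $\|w_j\|_{L^2}$ and the damped $\int_0^t\|w_j\|_{L^2}dt'$, hence the contributions controlled by $\tilde F_j/(\|u_j\|_{L^2}+\|w_j\|_{L^2})$. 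The remaining parabolic term $\int_0^t\|u\|^h_{\dot{B}^{d/2+1}_{2,1}}dt'$ is obtained by treating $\mu_1 w_j$ as a source in the $u$-equation and applying the heat smoothing of Proposition~\ref{smoothing}, which gives $\tfrac{d}{dt}\|u_j\|_{L^2}+c\,2^{2j}\|u_j\|_{L^2}\lesssim\mu_1\|w_j\|_{L^2}+\|\dot{\Delta}_j\mathbb{P}(u\cdot\nabla u)\|_{L^2}$; the $\mu_1\|w_j\|_{L^2}$ piece is absorbed via the already-established damping of $w_j$, and the residual $\tilde R_j=\|\dot{\Delta}_j\mathbb{P}(u\cdot\nabla u)\|_{L^2}$ accounts for the third summand on the right-hand side of the statement.

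The main obstacle is the precise calibration of $\delta$ in each regime so that $\mathcal{L}_j$ is strictly positive and the leftover anti-damping $\delta\tfrac{\mu_2}{2}\|\Lambda u_j\|_{L^2}^2$ (low frequencies), respectively $\delta\tfrac{\mu_2}{2}\|u_j\|_{L^2}^2$ (high frequencies), can be absorbed by the parabolic damping of $u_j$ while still leaving a strictly positive multiple of $\|\Lambda v_j\|_{L^2}^2$, respectively $\|v_j\|_{L^2}^2$. It is precisely this balance, different in the two regimes, that makes the low/high frequency split structurally unavoidable and explains why $\mathcal{E}^l_r$ naturally involves $\Lambda^{-1}\mathbb{P}\nabla\cdot\tau$ whereas $\mathcal{E}^h_r$ involves $\mathbb{P}\nabla\cdot\tau$.
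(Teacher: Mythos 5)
Your proposal is correct and follows essentially the same route as the paper: the low-frequency estimate via the corrected functional obtained by subtracting a small multiple of the cross term $\langle u_j,\Lambda v_j\rangle$ (the paper's $K_1$ playing the role of your $\delta$), the high-frequency estimate via an analogous cross correction yielding constant-rate damping on $\dot{\Delta}_j\mathbb{P}\nabla\cdot\tau$, and the separate parabolic estimate on $u_j$ with $\mu_1\|\dot{\Delta}_j\mathbb{P}\nabla\cdot\tau\|_{L^2}$ as a source absorbed by the already-established damping. The only cosmetic difference is the normalization of the high-frequency cross functional ($\langle u_j,\Lambda^{-1}v_j\rangle$ versus the paper's $\langle u_j,\Lambda v_j\rangle$ with fixed Young constants and the Bernstein downgrade $\|\Lambda u_j\|_{L^2}\gtrsim 2^{j_0}\|u_j\|_{L^2}$), which leads to the same conclusion.
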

Here we shall omit the definitions of $\tilde{E}_j, \tilde{F}_j, \tilde{R}_j$ for their specific forms do not affect the proof of Lemma \ref{originalLemma}. One can refer to the proof of Proposition \ref{pioriestimate} for their definitions.
\begin{proof}
Let us first fix a constant $j_0\in\mathbb{Z}$, which will be chosen in Step $2$. Throughout this part we shall suppose that $j\leq j_0$ as low frequencies and $j>j_0$ as high frequencies. Moreover, without loss of generality, we assume that
$$\|\dot{\Delta}_j u\|_{L^2}, \ \|\dot{\Delta}_j\Lambda^{-1}\mathbb{P}\nabla\cdot\tau\|_{L^2},\  \|\dot
{\Delta}_j\mathbb{P}\nabla\cdot\tau\|_{L^2}\neq 0. $$

\vskip 0.1cm
\textit{\bf{Step 1: Estimate of $\mathcal{E}^l_r(t)$.}}\label{step1}

\vskip 0.1cm

Set $$\widetilde{C}_0=2^{j_0}.$$  Thanks to Lemma \ref{benstein}, for any function $f$, there exist two constants $\widetilde{C}_1$, $\widetilde{C}_2$ such that
\begin{equation}\label{Bern}
\widetilde{C}_12^j\|\dot{\Delta}_jf\|_{L^p}\leq\|\Lambda \dot{\Delta}_jf\|_{L^p}\leq  \widetilde{C}_22^j\|\dot{\Delta}_jf\|_{L^p}.
\end{equation}
Applying the operator $\dot{\Delta}_j$ to the system \eqref{OBLG}, we get
\begin{equation}\label{equationut}
\begin{cases}
(\dot{\Delta}_ju)_t+\nu \Lambda^2\dot{\Delta}_ju-\mu_1\Lambda\dot{\Delta}_j\Lambda^{-1}\mathbb{P}\nabla\cdot\tau=
\dot{\Delta}_j\mathbb{P} E,\\
(\dot{\Delta}_j\Lambda^{-1}\mathbb{P}\nabla\cdot\tau)_t+\frac{\mu_2}{2}\Lambda\dot{\Delta}_j u=\dot{\Delta}_j\Lambda^{-1}\mathbb{P}\nabla\cdot F, \\
\end{cases}
\end{equation}
with
$$E=-u\cdot\nabla u ;\ \ \ \  F=-u\cdot\nabla\tau-Q(\tau,\nabla u).$$
For the sake of simplicity, we first suppose that $E, F=0$. Taking the $L^2$ scalar product of the first equation of \eqref{equationut} with $\dot{\Delta}_ju$, of the second with $\dot{\Delta}_j\Lambda^{-1}\mathbb{P}\nabla\cdot\tau$, we obtain the following two identities:
\begin{eqnarray}\label{ul}
&&\frac{1}{2}\frac{d}{dt}\|\dot{\Delta}_ju\|_{L^2}^2+\nu\|\Lambda \dot{\Delta}_j u\|^2_{L^2}-\mu_1(\Lambda\dot{\Delta}_j(\Lambda^{-1}\mathbb{P}\nabla\cdot\tau),
\dot{\Delta}_ju)=0,\\
\label{taul}
&&\frac{1}{2}\frac{d}{dt}\|\dot{\Delta}_j(\Lambda^{-1}\mathbb{P}\nabla\cdot\tau)\|^2_{L^2}
+\frac{\mu_2}{2}
(\Lambda\dot{\Delta}_ju,\dot{\Delta}_j\Lambda^{-1}\mathbb{P}\nabla\cdot\tau)=0.
\end{eqnarray}
To obtain an identity involving $(\dot{\Delta}_j\mathbb{P}\nabla\cdot\tau,\dot{\Delta}_j u)$, we take the scalar product of the first equation of \eqref{equationut} with $\dot{\Delta}_j\mathbb{P}\nabla\cdot\tau$, apply $\Lambda$ to the second equation and take the $L^2$ scalar product with $\dot{\Delta}_ju$, then sum up both equalities, which produces
\begin{equation}\label{tul}
\begin{split}
\frac{d}{dt}(\dot{\Delta}_j\mathbb{P}\nabla\cdot\tau,\dot{\Delta}_j u)+\nu(\dot{\Delta}_j\Lambda^2u,\dot{\Delta}_j\mathbb{P}\nabla\cdot\tau)-\mu_1
\|\dot{\Delta}_j\mathbb{P}\nabla\cdot\tau\|^2_{L^2}+\frac{\mu_2}{2}\|
\Lambda\dot{\Delta}_j u\|^2_{L^2}=0.
\end{split}
\end{equation}
Calculating $\frac{\mu_2}{2}\eqref{ul}+\mu_1\eqref{taul}-K_1\eqref{tul}$, we obtain
\begin{equation}\label{addl}
\begin{split}
&\frac{1}{2}\frac{d}{dt}\Big(\frac{\mu_2}{2}\|\dot{\Delta}_j u\|^2_{L^2}+\mu_1\|\dot{\Delta}_j\Lambda^{-1}\mathbb{P}\nabla\cdot\tau\|^2_{L^2}-2K_1(\dot{\Delta}_j\mathbb{P}\nabla\cdot\tau,\dot{\Delta}_j u)\Big) \ \ \ \ \ \\
&+\big(\frac{\mu_2\nu}{2}-\frac{\mu_2K_1}{2}\big)\|\Lambda\dot{\Delta}_j u\|^2_{L^2}+\mu_1K_1\|\dot{\Delta}_j\mathbb{P}\nabla\cdot\tau\|^2_{L^2}
-\nu K_1(\dot{\Delta}_j\Lambda^2u,\dot{\Delta}_j\mathbb{P}\nabla\cdot\tau)\\
&=0.
\end{split}
\end{equation}
Using H\"{o}lder's inequality, Young's inequality and \eqref{Bern}, we find that for all $\epsilon_0, \epsilon_1>0$,
\begin{eqnarray*}
&&2K_1|(\dot{\Delta}_j\mathbb{P}\nabla\cdot\tau,\dot{\Delta}_j u)|\leq \frac{1}{\epsilon_0}\|\dot{\Delta}_ju\|^2_{2}+\epsilon_0\widetilde{C}_{2}^2\widetilde{C}_0^2K_1^2
\|\dot{\Delta}_j\Lambda^{-1}\mathbb{P}\nabla\cdot\tau\|^2_{L^2},\\
&&\nu K_1|(\dot{\Delta}_j\Lambda^2u,\dot{\Delta}_j\mathbb{P}\nabla\cdot\tau)|\leq\frac{\nu \widetilde{C}_0^2\widetilde{C}_2^2K_1}{2\epsilon_1}\|\dot{\Delta}_j\Lambda u\|^2_{L^2}+\frac{\epsilon_1\nu K_1}{2}\|\dot{\Delta}_j\mathbb{P}\nabla\cdot\tau\|_{L^2}^2.
\end{eqnarray*}
Inserting the above two inequalities into \eqref{addl}  with $\epsilon_0=\frac{4}{\mu_2}, \epsilon_1=\frac{\mu_1}{\nu}$, and combining \eqref{Bern}, we derive
\begin{equation}\label{UTL}
\begin{split}
&\frac{1}{2}\frac{d}{dt}\bigg(\frac{\mu_2}{4}\|\dot{\Delta}_ju\|_{L^2}^2+
\Big(\mu_1-\frac{4\widetilde{C}
_0^2\widetilde{C}_{2}^2K_1^2}{\mu_2}\Big)
\|\dot{\Delta}_j\Lambda^{-1}\mathbb{P}\nabla\cdot\tau\|_{L^2}^2\bigg)\\
&+\big(\frac{\mu_2\nu}{2}-\frac{\mu_2K_1}{2}-\frac{\nu^2
\widetilde{C}_0^2\widetilde{C}_{2}^2K_1}{2\mu_1}\big)\widetilde{C}^2_{1}2^{2j}
\|\dot{\Delta}_ju\|_{L^2}^2+
\frac{\mu_1\widetilde{C}_{1}^2K_1}{2}2^{2j}\|\dot{\Delta}_j{\Lambda}^{-1}
\mathbb{P}\nabla\cdot\tau\|_{L^2}^2 \\
& \leq 0.
\end{split}
\end{equation}
Choosing a constant $K_1$ such that $0<K_1<\min\big\{\frac{\sqrt{\mu_1\mu_2}}{2\widetilde{C}_0
\widetilde{C}_2},\frac{\mu_1\mu_2\nu}{\mu_1\mu_2+\nu^2\widetilde{C}_0^2
\widetilde{C}_{2}^2}\big\}$,
we ensure that the coefficients of $\|\dot{\Delta}_j{\Lambda}^{-1}\mathbb{P}\nabla\cdot\tau\|_{L^2}^2$ and $\|\dot{\Delta}_ju\|_{L^2}^2$ in \eqref{UTL} are positive.
Thus in the general case ($E, F\neq0$) that containing the nonlinear terms,
we obtain
\begin{equation}\label{aa}
\begin{split}
&\frac{1}{2}\frac{d}{dt}\Big(\|\dot{\Delta}_ju\|_{L^2}^2+
\|\dot{\Delta}_j\Lambda^{-1}\mathbb{P}\nabla\cdot\tau\|_{L^2}^2\Big)
+2^{2j}\Big(\|\dot{\Delta}_ju\|_{L^2}^2+
\|\dot{\Delta}_j{\Lambda}^{-1}\mathbb{P}\nabla\cdot\tau\|_{L^2}^2\Big
)
\\&\leq C_2 \tilde{E}_j,
\end{split}
\end{equation}
for the constant $C_2>0$.
Dividing by $\|\dot{\Delta}_ju\|_{L^2}+
\|\dot{\Delta}_j\Lambda^{-1}\mathbb{P}\nabla\cdot\tau\|_{L^2}$, \eqref{aa} can be written as
\begin{equation}\label{UL}
\begin{split}
&\frac{d}{dt}\Big(\|\dot{\Delta}_ju\|_{L^2}+
\|\dot{\Delta}_j\Lambda^{-1}\mathbb{P}\nabla\cdot\tau\|_{L^2}\Big)
+2^{2j}\Big(\|\dot{\Delta}_ju\|_{L^2}+
\|\dot{\Delta}_j{\Lambda}^{-1}\mathbb{P}\nabla\cdot\tau\|_{L^2} \Big ) \\
&\leq C_2 \tilde{E}_j/(\|\dot{\Delta}_ju\|_{L^2}+
\|\dot{\Delta}_j\Lambda^{-1}\mathbb{P}\nabla\cdot\tau\|_{L^2}).
\end{split}
\end{equation}
Multiplying both sides of \eqref{UL} by $2^{j(\frac{d}{2}-1)}$, summing up by $j\leq j_0$ (actually we can choose $j_0=0$, see Step 2), and then performing the time integration over $[0,t]$, we conclude that
\begin{equation}\label{e0l}
\begin{split}
\mathcal{E}^l_r(t)\leq &C_1( \|u_0\|^l_{\dot{B}^{\frac{d}{2}-1}_{2,1}}+\|\tau_0\|^l_{\dot{B}^{\frac{d}{2}
-1}_{2,1}}) \\
&+C_2 \int_0^t \sum_{j\leq j_0}2^{j(\frac{d}{2}-1)}\tilde{E}_j/(\|\dot{\Delta}_j u\|_{L^2}+\|\dot{\Delta}_j\Lambda^{-1}\mathbb{P}\nabla\cdot\tau\|_{L^2})\mathrm{d}t'.
\end{split}
\end{equation}
\vskip 0.1cm
\textit{\bf{Step 2: Estimate of $\mathcal{E}^h_r(t)$.}}\label{step2}
\vskip 0.1cm
By the similar argument as the way in the low frequencies, we first consider the linear terms of \eqref{equationut} as well. From the second equation of \eqref{equationut}, we can easily obtain the following inequality:
\begin{equation}\label{tauh}
\frac{1}{2}\frac{d}{dt}\|\dot{\Delta}_j\mathbb{P}\nabla\cdot\tau\|_{L^2}^2
+\frac{\mu_2}{2}
(\dot{\Delta}_j\Lambda^2u,\dot{\Delta}_j\mathbb{P}\nabla\cdot\tau)=0.
\end{equation}
Calculating $\nu\eqref{tauh}-\frac{\mu_2}{2}\eqref{tul}+K_2\eqref{ul}$, we derive
\begin{equation}\label{TUH}
\begin{split}
&\frac{1}{2}\frac{d}{dt}\Big(K_2\|\dot{\Delta}_ju\|_{L^2}^2+\nu\|\dot{\Delta}
_j\mathbb{P}\nabla\cdot\tau\|_{L^2}^2-\mu_2(\dot{\Delta}_j
\mathbb{P}\nabla\cdot\tau,\dot{\Delta}_j u)\Big)\\
&+\big(\nu K_2-\frac{\mu_{2}^2}{4}\big)\|\Lambda\dot{\Delta}_ju\|_{L^2}^2+\frac{\mu_1\mu_2}
{2}\|\dot{\Delta}_j\mathbb{P}\nabla\cdot\tau\|_{L^2}^2
-\mu_1K_2(\dot{\Delta}_j\mathbb{P}\nabla\cdot\tau,\dot{\Delta}_ju)=0.
\end{split}
\end{equation}
It is easy to check that for all $\epsilon_0, \epsilon_1>0$, we have
\begin{equation}\label{1}\begin{split}
&\mu_2|(\dot{\Delta}_j\mathbb{P}\nabla\cdot\tau,\dot{\Delta}_j u)|\leq\frac{\epsilon_0}{2}\|\dot{\Delta}_ju\|^2_2+\frac{\mu_2^2}{2\epsilon_0}\|\dot
{\Delta}_j\mathbb{P}\nabla\cdot\tau\|^2_{L^2},\\
&\mu_1K_2|(\dot{\Delta}_j\mathbb{P}\nabla\cdot\tau,\dot{\Delta}_ju)|\leq
\frac{\epsilon_1\mu_1K_2}{2}\|\dot{\Delta}_ju\|^2_{L^2}+
\frac{\mu_1K_2}{2\epsilon_1}\|\dot{\Delta}_j\mathbb{P}\nabla\cdot\tau\|^2_{L^2}.
\end{split}
\end{equation}
Noticing that $j\geq j_0+1$, the following inequality shows that the smoothing effect of system \eqref{OBLG} on $u$ will lose,
\begin{equation}\label{2}
\|\Lambda\dot{\Delta}_ju\|^2_{L^2}\geq \widetilde{C}^2_12^{2j}\|\dot{\Delta}_ju\|^2_{L^2}\geq \widetilde{C}^2_1\widetilde{C}^2_0\|\dot{\Delta}_j u\|^2_{L^2}.
\end{equation}
Combining $\eqref{TUH}\sim\eqref{2}$, and choosing
$$K_2=\epsilon_0=\frac{\mu^2_{2}}{\nu};\ \ \ \epsilon_1=\frac{2K_2}{\mu_2}.$$
we finally get
\begin{equation}\label{tuh}
\begin{split}
&\frac{1}{2}\frac{d}{dt}\Big(\frac{\mu^2_{2}}{2\nu}\|\dot{\Delta}_ju\|_{L^2}
^2+\frac{\nu}{2}\|\dot{\Delta}_j\mathbb{P}\nabla\cdot\tau\|^2_{L^2}\Big)  \\
&+\big(\frac{3}{4}\mu_2^2\widetilde{C}_1^2\widetilde{C}_0^2-\frac{\mu_1\mu_2^3}{\nu^2}\big)
\|\dot{\Delta}_ju\|_{L^2}^2+\frac{\mu_1\mu_2}{4}\|\dot{\Delta}_j
\mathbb{P}\nabla\cdot\tau\|_{L^2}^2=0.
\end{split}
\end{equation}
We then define $j_0=[\log_2\frac{2\sqrt{3\mu_1\mu_2}}{3\nu \widetilde{C}_1}]+1$ to ensure that the coefficient of $\|\dot{\Delta}_ju\|_2^2$ in \eqref{tuh} is positive. In light of the Remark \ref{wellposedness}, without loss of generality, we can define $j_0=0$  here and later.
In the general case where the nonlinear terms $E, F\neq0$, we get
\begin{equation*}
\begin{split}
&\frac{1}{2}\frac{d}{dt}\Big(\|\dot{\Delta}_ju\|_{L^2}^2+\|\dot{\Delta}_j\mathbb{P}\nabla\cdot
\tau\|^2_{L^2}\Big) +(\|\dot{\Delta}_ju\|_{L^2}^2+\|\dot{\Delta}_j\mathbb{P}\nabla\cdot\tau\|_{L^2}^2)\leq C_2\tilde{F}_j.
\end{split}
\end{equation*}
It is easy to see that the above estimate can be rewritten as
\begin{equation}\label{UTauL}
\begin{split}
\frac{d}{dt}\Big(\|\dot{\Delta}_ju\|_{L^2}+&\|\dot{\Delta}_j\mathbb{P}\nabla\cdot
\tau\|_{L^2}\Big) +(\|\dot{\Delta}_ju\|_{L^2}+\|\dot{\Delta}_j\mathbb{P}\nabla\cdot\tau\|_{L^2})\\
&\leq C_2\tilde{F}_j/(\|\dot{\Delta}_ju\|_{L^2}+\|\dot{\Delta}_j\mathbb{P}\nabla\cdot
\tau\|_{L^2}).
\end{split}
\end{equation}
Multiplying both sides of \eqref{UTauL} by $2^{j(\frac{d}{2}-1)}$, then summing up by $j> j_0$, we deduce after performing the time integration over $[0,t]$, that
\begin{equation}\label{e0h}
\begin{split}
&\sup_t\|u\|^h_{\dot{B}^{\frac{d}{2}-1}_{2,1}}+
\int_0^t+\sup_t\|\mathbb{P}\nabla\cdot\tau\|^h_{\dot{B}^{\frac{d}{2}-1}_{2,1}}+
\int_0^t\|u\|^h_{\dot{B
}^{\frac{d}{2}-1}_{2,1}}\mathrm{d}t'+
\|\mathbb{P}\nabla\cdot\tau\|^h_{\dot{B}^{\frac{d}{2}-1}_{2,1}}\mathrm{d}t'\\&\lesssim \|u_0\|^h_{\dot{B}^{\frac{d}{2}-1}_{2,1}}+\|\tau_0\|^h_{\dot{B}^{\frac{d}{2}}_{2,1}}+\int_0^t \sum_{j> 0}2^{j(\frac{d}{2}-1)}\tilde{F}_j/(\|\dot{\Delta}_j u\|_{L^2}+\|\dot{\Delta}_j\mathbb{P}\nabla\cdot\tau\|_{L^2})\mathrm{d}t'.
\end{split}
\end{equation}

Next we show the smoothing effect on $u$. In the same way as  the derivation of \eqref{ul}, we get
\begin{equation*}
\frac{d}{dt}\|\dot{\Delta}_ju\|_{L^2}+2^{2j}\|\dot{\Delta}_ju\|_{L^2}\leq C\|\dot{\Delta}_j\mathbb{P}\nabla\cdot\tau\|_{L^2}+C\tilde{R}_j/\|\dot{\Delta}_ju\|_{L^2}.
\end{equation*}
Using the above inequality we deduce
\begin{equation}\label{uh}
\begin{split}
\sup_t\|u\|^h_{\dot{B}^{\frac{d}{2}-1}_{2,1}}+&\int_0^t\|u\|^h_{\dot{B
}^{\frac{d}{2}+1}_{2,1}}\mathrm{d}t'
\leq \|u_0\|^h_{\dot{B}^{\frac{d}{2}-1}_{2,1}}+ C\int_0^t\|\mathbb{P}\nabla\cdot\tau\|^h_{\dot{B}^{\frac{d}{2}-1}_{2,1}}\mathrm{d}t'\\
&+C\int_0^t\sum_{j>0}2^{j(\frac{d}{2}-1)}\tilde{R}_j/\|\dot{\Delta}_ju\|_{L^2}\mathrm{d}t'.
\end{split}
\end{equation}
Calculating $\eqref{e0h}+\eta_1\eqref{uh}$, and if we choose $\eta_1$ small enough such that $\eta_1C\leq\frac{1}{2}$, then the term $\eta_1C\int_0^t\|\mathbb{P}\nabla\cdot\tau\|^h_{\dot{B}^{\frac{d}{2}-1}_{2,1}}\mathrm{d}t'$ can be absorbed by the left side. Therefore, we obtain
\begin{equation}\label{uhh}
\begin{split}
\mathcal{E}^h_r(t)\leq& C_1\big(\|u_0\|^h_{\dot{B}^{\frac{d}{2}-1}_{2,1}}+ \|\tau_0\|^h_{\dot{B}^{\frac{d}{2}}_{2,1}} \big) \\
&+C_2\int_0^t\sum_{j>0}2^{j(\frac{d}{2}-1)}\Big(\tilde{F}_j/(\|\dot{\Delta}_ju\|_{L^2}+\|\dot
{\Delta}_j\mathbb{P}\nabla\cdot\tau\|_{L^2})+\tilde{R}_j/\|\dot{\Delta}_ju\|
_{L^2}\Big)\mathrm{d}t'.
\end{split}
\end{equation}

Combining \eqref{e0l} and \eqref{uhh}, we complete the proof of Lemma \ref{originalLemma}.
\end{proof}
Although we have derived the above rough energy estimate, it seems that some parts of the nonlinear terms could not be controlled by $\mathcal{E}^h_r(t)$ and $\mathcal{E}_r^l(t)$ effectively. Therefore, we have to introduce a more accurate estimate as follows.
\begin{lemma}\label{energy}
Let $d\geq2, a=0$ and $(u,\tau)$ be the solution of system \eqref{OBA} on $[0,T)$.
The following estimate holds
\begin{equation*}
\begin{split}
&\mathcal{E}^l(t)+\mathcal{E}^h(t)\leq C_1\mathcal{E}_0\\
&+
C_2\int_0^t \sum_{j\leq 0}2^{j(\frac{d}{2}-1)}\Big( \tilde{E}_j/(\|\dot{\Delta}_j u\|_{L^2}+\|\dot{\Delta}_j\Lambda^{-1}\mathbb{P}\nabla\cdot\tau\|_{L^2})+\tilde{Q}_j/
(\|\dot{\Delta}_j u\|_{L^2}+\|\dot{\Delta}_j\tau\|_{L^2})
\Big)\mathrm{d}t'\\
&+C_2\int_0^t\sum_{j>0} 2^{j(\frac{d}{2}-1)}\Big(\tilde{F}_j/(\|\dot{\Delta}_j u\|_{L^2}+\|\dot{\Delta}_j\mathbb{P}\nabla\cdot\tau\|_{L^2})+\tilde{R}_j/\|\dot{\Delta}_j u\|_{L^2}
+2^{j}\tilde{M}_j/\|\dot{\Delta}_j \tau\|_{L^2}\Big)\mathrm{d}t',
\end{split}
\end{equation*}
where $C_1, C_2$ independent of $T$.
\end{lemma}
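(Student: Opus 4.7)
The plan is to supplement Lemma \ref{originalLemma} with two new ingredients that recover the $\tau$-supremum norms appearing in $\mathcal{E}^l$ and $\mathcal{E}^h$: a combined $(u,\tau)$ energy estimate in the low frequencies, which yields $\sup_t\|\tau\|^l_{\dot{B}^{\frac{d}{2}-1}_{2,1}}$, and a direct $\tau$-energy estimate in the high frequencies, which yields $\sup_t\|\tau\|^h_{\dot{B}^{\frac{d}{2}}_{2,1}}$. The remaining pieces of $\mathcal{E}^l+\mathcal{E}^h$ are already furnished by $\mathcal{E}^l_r+\mathcal{E}^h_r$, because $\|\Lambda^{-1}\mathbb{P}\nabla\cdot\tau\|^l_{\dot{B}^{\frac{d}{2}-1}_{2,1}}\lesssim\|\tau\|^l_{\dot{B}^{\frac{d}{2}-1}_{2,1}}$ and $\|\mathbb{P}\nabla\cdot\tau\|^h_{\dot{B}^{\frac{d}{2}-1}_{2,1}}\lesssim\|\tau\|^h_{\dot{B}^{\frac{d}{2}}_{2,1}}$ by Riesz boundedness and Bernstein.

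For the low-frequency combined estimate I localize both equations of \eqref{OBA} by $\dot{\Delta}_j$ and take $L^2$ inner products with $\dot{\Delta}_j u$ and $\dot{\Delta}_j\tau$, respectively. The pressure term drops by $\nabla\cdot u=0$. The decisive algebraic identity is
\[
(\dot{\Delta}_j D(u),\dot{\Delta}_j\tau)_{L^2}=-(\dot{\Delta}_j u,\dot{\Delta}_j\nabla\cdot\tau)_{L^2},
\]
obtained by one integration by parts together with the symmetry $\tau^{ij}=\tau^{ji}$. Consequently the weighted combination $\mu_2\cdot(u\text{-identity})+\mu_1\cdot(\tau\text{-identity})$ annihilates the coupling terms and gives
\[
\tfrac{1}{2}\tfrac{d}{dt}\bigl(\mu_2\|\dot{\Delta}_j u\|_{L^2}^2+\mu_1\|\dot{\Delta}_j\tau\|_{L^2}^2\bigr)+\mu_2\nu\|\nabla\dot{\Delta}_j u\|_{L^2}^2\le \tilde{Q}_j,
\]
where $\tilde{Q}_j$ packages the contributions of $u\cdot\nabla u$, $u\cdot\nabla\tau$ and $Q(\tau,\nabla u)$. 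Dividing by the square root of the energy (which is comparable to $\|\dot{\Delta}_j u\|_{L^2}+\|\dot{\Delta}_j\tau\|_{L^2}$), integrating in time, multiplying by $2^{j(\frac{d}{2}-1)}$ and summing over $j\le 0$ produce both the bound on $\sup_t(\|u\|^l_{\dot{B}^{\frac{d}{2}-1}_{2,1}}+\|\tau\|^l_{\dot{B}^{\frac{d}{2}-1}_{2,1}})$ and exactly the low-frequency $\tilde{Q}_j$-term appearing in the statement.

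For the high-frequency estimate I localize only the $\tau$-equation and pair with $\dot{\Delta}_j\tau$; Bernstein delivers $\|\dot{\Delta}_j D(u)\|_{L^2}\le C\,2^j\|\dot{\Delta}_j u\|_{L^2}$, so
\[
\tfrac{d}{dt}\|\dot{\Delta}_j\tau\|_{L^2}\le C\mu_2\,2^j\|\dot{\Delta}_j u\|_{L^2}+\tilde{M}_j/\|\dot{\Delta}_j\tau\|_{L^2},
\]
with $\tilde{M}_j$ collecting the transport and $Q(\tau,\nabla u)$ contributions. Writing $2^{j\frac{d}{2}}=2^j\cdot 2^{j(\frac{d}{2}-1)}$, multiplying through, summing over $j>0$ and integrating in time turns the linear term into a multiple of $\int_0^t\|u\|^h_{\dot{B}^{\frac{d}{2}+1}_{2,1}}\,dt'$, which is already controlled by the dissipative part of $\mathcal{E}^h_r$ via Lemma \ref{originalLemma}, while the nonlinear part is precisely the $2^j\tilde{M}_j/\|\dot{\Delta}_j\tau\|_{L^2}$ term in the statement. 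Combining these two new bounds with $\mathcal{E}^l_r+\mathcal{E}^h_r\le C_1\mathcal{E}_0+(\text{Lemma \ref{originalLemma} remainder})$ yields the claimed inequality. The principal obstacle is the low-frequency bound: a naive direct $L^2$ estimate on the $\tau$-equation at level $\dot{B}^{\frac{d}{2}-1}_{2,1}$ would require $\int_0^t\|u\|^l_{\dot{B}^{\frac{d}{2}}_{2,1}}\,dt'$, but in the low-frequency regime $\|u\|^l_{\dot{B}^{\frac{d}{2}}_{2,1}}\ge \|u\|^l_{\dot{B}^{\frac{d}{2}+1}_{2,1}}$, so no dissipation in $\mathcal{E}^l$ can absorb it; the combined-energy cancellation just described is precisely what compensates for the absence of damping on $\tau$ when $a=0$.
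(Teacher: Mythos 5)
Your proof is correct and follows essentially the same route as the paper: the two supplements you add to Lemma \ref{originalLemma} --- the low-frequency bound on $\sup_t\|\tau\|^l_{\dot{B}^{\frac{d}{2}-1}_{2,1}}$ via the cancellation $(\dot{\Delta}_j\nabla\cdot\tau,\dot{\Delta}_ju)+(\dot{\Delta}_jD(u),\dot{\Delta}_j\tau)=0$ combined as $\mu_2\cdot(u\text{-identity})+\mu_1\cdot(\tau\text{-identity})$, and the high-frequency bound on $\sup_t\|\tau\|^h_{\dot{B}^{\frac{d}{2}}_{2,1}}$ from the $\tau$-equation alone --- are exactly the paper's Steps 1 and 2. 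The only (immaterial) difference is that you control the resulting $C\int_0^t\|u\|^h_{\dot{B}^{\frac{d}{2}+1}_{2,1}}\,\mathrm{d}t'$ term directly by the already-established bound on $\mathcal{E}^h_r$, whereas the paper absorbs it by adding $\eta_2$ times the $\tau$-estimate with $\eta_2C\le\frac{1}{2}$.
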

As pointed out in Lemma \ref{originalLemma}, here we also omit the definitions of $\tilde{E}_j, \tilde{F}_j, \tilde{Q}_j,  \tilde{R}_j, \tilde{M}_j$. Please refer to the proof of Proposition \ref{pioriestimate} for their definitions.

\begin{proof} Without loss of generality, we assume
$$\|\dot{\Delta}_j u\|_{L^2},\  \|\dot{\Delta}_j\Lambda^{-1}\mathbb{P}\nabla\cdot\tau\|_{L^2},\  \|\dot{\Delta}_j\mathbb{P}\nabla\cdot\tau\|_{L^2},\ \|\dot{\Delta}_j\tau\|_{L^2}\neq 0. $$
\vskip 0.1cm
\textit{\bf{Step 1: Estimate of $\mathcal{E}^l(t)$.}}\label{step3}
\vskip 0.1cm
In this step we will supply the estimate of $\sup_t\|\tau\|^l_{\dot{B}^{\frac{d}{2}-1}_{2,1}}$ to $\mathcal{E}^l_r(t)$.

First, let us notice some cancellations on linear terms. \begin{eqnarray}{\label{cancellation1}}
&&(\dot{\Delta}_j\nabla {p},\dot{\Delta}_ju)=0,\\ \label{cancellation2}
&&(\dot{\Delta}_j\nabla \cdot\tau,\dot{\Delta}_ju)+(\dot{\Delta}_jD(u),\dot{\Delta}_j\tau)=0.
\end{eqnarray}
A standard energy computation of system \eqref{OBA} yields
\begin{eqnarray}\label{AU}
&&\frac{1}{2}\frac{d}{dt}\|\dot{\Delta}_j u\|_{L^2}^2+\nu\|\Lambda\dot{\Delta}_ju\|_{L^2}^2=\mu_1(\dot{\Delta}_j\nabla\cdot\tau,\dot{\Delta}_j u)
-(\dot{\Delta}_j(u\cdot\nabla u),\dot{\Delta}_ju).\\ \label{AT}
&&\frac{1}{2}\frac{d}{dt}\|\dot{\Delta}_j\tau\|_{L^2}^2=\mu_2(\dot{\Delta}_jD(u),\dot{\Delta}_j\tau)-(\dot{\Delta}_j(u\cdot\nabla \tau),\dot{\Delta}_j\tau)-(\dot{\Delta}_jQ(\tau,\nabla u),\dot{\Delta}_j\tau)
\end{eqnarray}
Calculating $\mu_2\eqref{AU}+\mu_1\eqref{AT}$, and thanks to \eqref{cancellation2}, we finally get
\begin{equation*}
\frac{1}{2}\frac{d}{dt}\Big(\|\dot{\Delta}_j u\|_{L^2}+\|\dot{\Delta}_j\tau\|_{L^2}\Big)\leq
C_2 \tilde{Q}_j/(\|\dot{\Delta}_j u\|_{L^2}+\|\dot{\Delta}_j\tau\|_{L^2}).
\end{equation*}
By a straightforward computation in the low frequencies, the above inequality can be estimated as
\begin{equation}\label{SupplyL}
\begin{split}
\sup_t\|u\|^l_{\dot{B}^{\frac{d}{2}-1}_{2,1}}+&\sup_t\|\tau\|^l_{\dot{B}^{\frac{d}{2}-1}_{2,1}}
\leq C_1( \|u_0\|^l_{\dot{B}^{\frac{d}{2}-1}_{2,1}}+\|\tau_0\|^l_{\dot{B}^{\frac{d}{2}-1}_{2,1}})
\\
&+C_2\int_0^t \sum_{j\leq 0}2^{j(\frac{d}{2}-1)}\tilde{Q}_j/(\|\dot{\Delta}_j u\|_{L^2}+\|\dot{\Delta}_j\tau\|_{L^2})\mathrm{d}t'.
\end{split}
\end{equation}
Combining \eqref{SupplyL} and \eqref{e0l}, we obtain
\begin{equation}\label{jia1}
\begin{split}
\mathcal{E}^l(t)\leq&
C_1(\|u_0\|^l_{\dot{B}^{\frac{d}{2}-1}_{2,1}}+\|\tau_0\|^l_{\dot{B}
^{\frac{d}{2}-1,\frac{d}{2}}})+ C_2\int_0^t \sum_{j\leq
0}2^{j(\frac{d}{2}-1)}\\&\times\Big( \tilde{E}_j/(\|\dot{\Delta}_j
u\|_{L^2}+\|\dot{\Delta}_j\Lambda^{-1}\mathbb{P}\nabla\cdot\tau\|_{L^2})+\tilde{Q}_j/
(\|\dot{\Delta}_j u\|_{L^2}+\|\dot{\Delta}_j\tau\|_{L^2})
\Big)\mathrm{d}t'.
\end{split}
\end{equation}
\vskip 0.1cm
\textit{\bf{Step 2: Estimate of $\mathcal{E}^h(t)$.}}\label{step4}
\vskip 0.1cm

In this step we will supply the estimate of $\sup_t\|\tau\|^h_{\dot{B}^{\frac{d}{2}}_{2,1}}$ to $\mathcal{E}^h_r(t)$.

It follows from the equality \eqref{AT} that
\begin{equation*}
\frac{d}{dt}\big(2^j\|\dot{\Delta}_j\tau\|_2\big)\leq C2^{2j}\|\dot{\Delta}_ju\|_{2}+C2^j\tilde{M}_j/\|\dot{\Delta}_j\tau\|_{L^2}.
\end{equation*}
The standard computation yields
\begin{equation}\label{Tauh}
\begin{split}
\sup_t\|\tau\|^h_{\dot{B}^{\frac{d}{2}}_{2,1}}\leq \|\tau_0\|^h_{\dot{B}^{\frac{d}{2}}_{2,1}}+ C\int_0^t\|u\|^h_{\dot{B}^{\frac{d}{2}+1}_{2,1}}\mathrm{d}t'
+C\int_0^t\sum_{j>0}2^{j\frac{d}{2}}\tilde{M}_j/\|\dot{\Delta}_j\tau\|_{L^2}\mathrm{d}t'.
\end{split}
\end{equation}
Calculating $\eqref{uhh}+\eta_2\eqref{Tauh}$, and if we choose $\eta_2$ small enough such that $\eta_2C\leq\frac{1}{2}$, then the term $\eta_2C\int_0^t\|u\|^h_{\dot{B}^{\frac{d}{2}+1}_{2,1}}\mathrm{d}t'$ can be absorbed by the left side, we eventually obtain
\begin{equation}\label{energyfinal}
\begin{split}
\mathcal{E}^h(t)\leq& C_1 (\|u_0\|^h_{\dot{B}^{\frac{d}{2}-1}_{2,1}}+\|\tau_0\|^h
_{\dot{B}^{\frac{d}{2}}_{2,1}})
+C_2\int_0^t\sum_{j\leq0}2^{j(\frac{d}{2}-1)}\Big(\tilde{F}_j/(\|\dot{\Delta}_j u\|_{L^2}\\&+\|\dot{\Delta}_j\mathbb{P}\nabla\cdot\tau\|_{L^2})
+\tilde{R}_j/\|\dot{\Delta}_j u\|_{L^2}
+2^{j}\tilde{M}_j/\|\dot{\Delta}_j \tau\|_{L^2}\Big)\mathrm{d}t'.
\end{split}
\end{equation}

Combining \eqref{jia1} and \eqref{energyfinal}, we complete the proof of Lemma \ref{energy}.
\end{proof}

Now let us turn to the proof of Proposition \ref{pioriestimate}.
\begin{proof}
We will apply Lemma \ref{energy} to prove \eqref{DD}. Let us give the definitions of the nonlinear terms in Lemma \ref{energy},
the term $\tilde{E}_j=\tilde{E}^1_j+\tilde{E}^2_j+\tilde{E}^3_j$ with
\begin{equation*}
\begin{split}
&\begin{split}
\tilde{E}_j^1=&-\frac{\mu_2}{2}(\dot{\Delta}_j\mathbb{P}(u\cdot\nabla u),\dot{\Delta}_j u)-\mu_1(\dot{\Delta}_j
\Lambda^{-1}\mathbb{P}\nabla\cdot Q(\tau,\nabla u),\dot{\Delta}_j\Lambda^{-1}\mathbb{P}\nabla\cdot\tau)  \\
&+K_1(\dot{\Delta}_j\mathbb{P}\nabla\cdot Q(\tau,\nabla u),\dot{\Delta}_j u),
\end{split}\\
&\tilde{E}_j^2=-\mu_1(\dot{\Delta}_j\Lambda^{-1}\mathbb{P}\nabla\cdot(u\cdot\nabla\tau),
\dot{\Delta}_j\Lambda^{-1}\mathbb{P}\nabla\cdot\tau), \\
&\tilde{E}_j^3=K_1(\dot{\Delta}_j\mathbb{P}(u\cdot\nabla u),\dot{\Delta}_j\mathbb{P}\nabla\cdot\tau)+K_1(\dot{\Delta}_j\mathbb{P}
\nabla\cdot(u\cdot\nabla\tau),\dot{\Delta}_ju),
\end{split}
\end{equation*}
and the term $\tilde{F}_j=\tilde{F}^1_j+\tilde{F}^2_j+\tilde{F}^3_j$ with
\begin{equation*}
\begin{split}
&
\begin{split}
\tilde{F}_j^1=&-\nu(\dot{\Delta}_j\mathbb{P}\nabla\cdot Q(\tau,\nabla u),\dot{\Delta}_j\mathbb{P}\nabla\cdot\tau)+\frac{\mu_2}{2}
(\dot{\Delta}_j\mathbb{P}\nabla\cdot Q(\tau,\nabla u),\dot{\Delta}_ju)\\
&-\frac{\mu_2^2}{\nu}(\dot{\Delta}_j\mathbb{P}(u\cdot\nabla u),\dot{\Delta}_ju),
\end{split}
\\
&\tilde{F}_j^2=-\nu(\dot{\Delta}_j\mathbb{P}\nabla\cdot(u\cdot\nabla \tau),\dot{\Delta}_j\mathbb{P}\nabla\cdot\tau),\\
&\tilde{F}_j^3=\frac{\mu_2}{2}(\dot{\Delta}_j\mathbb{P}(u\cdot\nabla u),\dot{\Delta}_j\mathbb{P}\nabla\cdot\tau)
+\frac{\mu_2}{2}(\dot{\Delta}_j\mathbb{P}\nabla\cdot(u\cdot\nabla\tau),\dot{\Delta}_ju),
\end{split}
\end{equation*}
and
\begin{equation*}
\begin{split}
&\tilde{Q}_j=-\mu_2(\dot{\Delta}_j(u\cdot\nabla u),\dot{\Delta}_ju)-\mu_1(\dot{\Delta}_j(u\cdot\nabla \tau),\dot{\Delta}_j\tau)-\mu_1(\dot{\Delta}_jQ(\tau,\nabla u),\dot{\Delta}_j\tau),\\
&\tilde{R}_j=-(\dot{\Delta}_j\mathbb{P}(u\cdot\nabla u),\dot{\Delta}_j u),\\
&\tilde{M}_j=-(\dot{\Delta}_j(u\cdot\nabla\tau),\dot{\Delta}_j\tau)-(\dot{\Delta}_j Q(\tau,\nabla u),\dot{\Delta}_j \tau),
\end{split}
\end{equation*}
\noindent for the constant $K_1$ is chosen in \eqref{UTL} in Lemma \ref{originalLemma}.
\vskip 0.1cm
\textit{\bf{Step 1: Estimate for $\tilde{E}_j^1,\ \tilde{F}_j^1,\ \tilde{Q}_j,\ \tilde{M}_j,\ \tilde{R}_j$.}}\label{step5}
\vskip 0.1cm

First we consider the terms of $\tilde{E}_j^1,\ \tilde{F}_j^1,\ \tilde{Q}_j,\ \tilde{M}_j,\ \tilde{R}_j$ excluding $Q(\tau,\nabla u)$. Noting that $\nabla\cdot u=0$, we have
\begin{equation}\label{BB}
(\dot{\Delta}_j\mathbb{P}(u\cdot\nabla u),\dot{\Delta}_j u)=(\dot{\Delta}_j(u\cdot\nabla u),\dot{\Delta}_j u).
\end{equation}
According to Proposition \ref{pro1} and Proposition \ref{pro2} in Appendix, we have
\begin{equation}\begin{aligned}
&|(\dot{\Delta}_j(u\cdot\nabla u),\dot{\Delta}_j u)|
\lesssim c_j2^{-j(\frac{d}{2}-1)}\|u\|_{\dot{B}^{\frac{d}{2}+1}_{2,1}}\|u\|
_{\dot{B}^{\frac{d}{2}-1}_{2,1}}\|\dot{\Delta}_ju\|_{L^2},\\
&|(\dot{\Delta}_j(u\cdot\nabla \tau),\dot{\Delta}_j \tau)| \lesssim
c_j2^{-j\psi^{\frac{d}{2}-1,\frac{d}{2}}(j)}\|u\|_{\dot{B}^{\frac{d}{2}+1}_{2,1}}
\|\tau\|_{\dot{B}^{\frac{d}{2}-1,\frac{d}{2}}}\|\dot{\Delta}_j\tau\|_{L^2},
\end{aligned}\end{equation}
with $\sum_{j\in\mathbb{Z}}c_j\leq1$.
Next we consider the terms of $\tilde{E}_j^1,\ \tilde{F}_j^1,\ \tilde{Q}_j,\ \tilde{R}_j,\ \tilde{M}_j$  including $Q(\tau,\nabla u)$. Applying Lemma \ref{benstein} and H\"{o}lder's inequality, for $j\leq 0$, we have
\begin{equation}\begin{aligned}
&\big|-\mu_1(\dot{\Delta}_j\Lambda^{-1}\mathbb{P}\nabla\cdot Q(\tau,\nabla u),\dot{\Delta}_j\Lambda^{-1}\mathbb{P}\nabla\cdot\tau)+K_1(\dot{\Delta}_j\mathbb{P}\nabla\cdot Q(\tau,\nabla u),\dot{\Delta}_j u)\big|\\&\lesssim(1+2^j)\| Q(\tau,\nabla u)\|_{L^2}(\|\dot{\Delta}_j u\|_{L^2}+\|\dot{\Delta}_j\Lambda^{-1}\mathbb{P}\nabla\cdot\tau\|_{L^2})\\
&\lesssim\| Q(\tau,\nabla u)\|_{L^2}(\|\dot{\Delta}_j u\|_{L^2}+\|\dot{\Delta}_j\Lambda^{-1}\mathbb{P}\nabla\cdot\tau\|_{L^2}),
\\
&\big|-\mu_1(\dot{\Delta}_jQ(\tau,\nabla u),\dot{\Delta}_j\tau)\big|\lesssim\| Q(\tau,\nabla u)\|_{L^2}\|\dot{\Delta}_j \tau\|_{L^2},
\end{aligned}\end{equation}
and $j\ge0$, we have
\begin{equation}\begin{aligned}\label{CC}
&\big|-\nu(\dot{\Delta}_j\mathbb{P}\nabla\cdot Q(\tau,\nabla u),\dot{\Delta}_j\mathbb{P}\nabla\cdot\tau)+\frac{\mu_2}{2}
(\dot{\Delta}_j\mathbb{P}\nabla\cdot Q(\tau,\nabla u),\dot{\Delta}_ju) \big| \\
&\lesssim2^j\| Q(\tau,\nabla u)\|_{L^2}(\|\dot{\Delta}_j u\|_{L^2}+\|\dot{\Delta}_j\mathbb{P}\nabla\cdot\tau\|_{L^2}),
\\
&\big|-(\dot{\Delta}_j Q(\tau,\nabla u),\dot{\Delta}_j \tau) \big|\lesssim\| Q(\tau,\nabla u)\|_{L^2}(\|\dot{\Delta}_j \tau\|_{L^2}+\|\dot{\Delta}_j\mathbb{P}\nabla\cdot\tau\|_{L^2}).
\end{aligned}\end{equation}
Combining $\eqref{BB}\sim\eqref{CC}$, we finally obtain
\begin{equation}\label{sum1}
\begin{split}
&\int_0^t\sum_{j\leq0}2^{j(\frac{d}{2}-1)}\Big(\tilde{E}_j^1/(\|\dot{\Delta}_ju\|_2\!+\!
\|\dot{\Delta}_j\Lambda^{-1}\mathbb{P}\nabla\cdot\tau\|_{L^2})\!+\! \tilde{Q}_j/(\|\dot{\Delta}_j u\|_{L^2}\!+\!\|\dot{\Delta}_j\tau\|_{L^2}\Big)\mathrm{d}t'\\
&\!\!+\!\!\int_0^t\!\!\sum_{j\leq0}2^{j(\frac{d}{2}-1)}\Big(\tilde{F}_j^1
/(\|\dot{\Delta}_ju\|_{L^2}\!+\!\|\dot{\Delta}_j
\mathbb{P}\nabla\!\cdot\!\tau\|_{L^2})\!+\!\tilde{R}_j/\|\dot{\Delta}_ju\|_{L^2}\!+\!2^j\tilde{M}_j/\|\dot{
\Delta}_j\tau\|_{L^2}\Big)\mathrm{d}t'\\
&\lesssim\big(\sup_t\|\tau\|_{\dot{B}^{\frac{d}{2}-1,\frac{d}{2}}}
+\sup_t\|u\|_{\dot{B}^{\frac{d}{2}-1}_{2,1}}\big)\int_0^t
\|u\|_{\dot{B}^{\frac{d}{2}+1}_{2,1}}\mathrm{d}t'+\int_0^t\|Q(\tau,\nabla u)\|_{\dot{B}^{\frac{d}{2}-1,\frac{d}{2}}}\mathrm{d}t'\\
&\lesssim \big(\sup_t\|\tau\|_{\dot{B}^{\frac{d}{2}-1,\frac{d}{2}}}+
\sup_t\|u\|_{\dot{B}^{\frac{d}{2}-1}_{2,1}}\big)\int_0^t
\|u\|_{\dot{B}^{\frac{d}{2}+1}_{2,1}}\mathrm{d}t',
\end{split}
\end{equation}
where in the second inequality we use Remark \ref{BY}.
\vskip 0.1cm
\textit{\bf{Step 2: Estimate for $\tilde{E}_j^2, \tilde{E}_j^3$.}}\label{step6}
\vskip 0.1cm
Next we turn to the term $\tilde{E}_j^2$. Our strategy is to apply Proposition \ref{Profile} and divide $\tilde{E}_j^2$ into three parts. We have
$\tilde{E}_j^2= \tilde{E}_j^{2,1}+\tilde{E}_j^{2,2}+\tilde{E}_j^{2,3}$, where
\begin{equation*}
\begin{split}
&\tilde{E}_j^{2,1}=-\mu_1(\dot{\Delta}_j\Lambda^{-1}\mathbb{P}(u\cdot\nabla\mathbb{P}\nabla\cdot\tau)
,\dot{\Delta}_j\Lambda^{-1}\mathbb{P}\nabla\cdot\tau),\\
&\tilde{E}_j^{2,2}=-\mu_1(\dot{\Delta}_j\Lambda^{-1}\mathbb{P}(\nabla u\cdot\nabla\tau),\dot{\Delta}_j\Lambda^{-1}\mathbb{P}\nabla\cdot\tau),\\
&\tilde{E}_j^{2,3}=\mu_1(\dot{\Delta}_j\Lambda^{-1}\mathbb{P}(\nabla u\cdot\nabla\Delta^{-1}\nabla\cdot\nabla\cdot\tau),\dot{\Delta}_j\Lambda^{-1}
\mathbb{P}\nabla\cdot\tau).
\end{split}
\end{equation*}
Let us consider the most difficult term $\tilde{E}_j^{2,1}$. We claim that
\begin{equation}\label{Ej2}
\|u\otimes\mathbb{P}\nabla\cdot\tau\|^l_{\dot{B}^{\frac{d}{2}-1}_{2,1}}\lesssim\|u\|_{\dot{B}^{
\frac{d}{2}-1}_{2,1}}\|\mathbb{P}\nabla\cdot\tau\|_{B^{\frac{d}{2}
,\frac{d}{2}-1}}+\|u\|_{\dot{B}^{\frac{d}{2}+1}_{2,1}}
\|\mathbb{P}\nabla\cdot\tau\|_{B^{\frac{d}{2}-2,\frac{d}{2}-1}}.
\end{equation}
Then by H\"{o}lder's inequality, $\nabla\cdot u=0$ and \eqref{Ej2}, we obtain
\begin{equation}\label{Ej21}
\begin{split}
&\int_0^t\sum_{j\leq0}2^{j(\frac{d}{2}-1)}|\tilde{E}_j^{2,1}|/(\|\dot{\Delta}_ju\|_{L^2}+
\|\dot{\Delta}_j\Lambda^{-1}\mathbb{P}\nabla\cdot\tau\|_{L^2})\mathrm{d}t'  \\
&\lesssim\int_0^t\|u\otimes\mathbb{P}\nabla\cdot\tau\|^l_{
\dot{B}^{\frac{d}{2}-1}_{2,1}}\mathrm{d}t' \\
&\lesssim\sup_t\|\tau\|_{\dot{B}^{\frac{d}{2}-1,\frac{d}{2}}}\int_
0^t\|u\|_{\dot{B}^{\frac{d}{2}+1}_{2,1}}\mathrm{d}t'
+\sup_t\|u\|_{\dot{B}^{\frac{d}{2}-1}_{2,1}}\int_0^t\|\mathbb{P}\nabla\cdot\tau\|
_{B^{\frac{d}{2},\frac{d}{2}-1}}\mathrm{d}t'.
\end{split}
\end{equation}
The proof of \eqref{Ej2} is
based on the Bony's product decomposition $$u\otimes\mathbb{P}\nabla\cdot\tau=\dot{T}_{u}\mathbb{P}\nabla\cdot\tau+\dot{T}_{\mathbb{P}
\nabla\cdot\tau}u
+\dot{R}(u,\mathbb{P}\nabla\cdot\tau).$$
where the definitions of $\dot{T}$ and $\dot{R}$ can be referred to Appendix.
Then by Proposition \ref{pro1}, we obtain
\begin{eqnarray*}
&&\|T_{u}\mathbb{P}\nabla\cdot\tau+T_{\mathbb{P}
\nabla\cdot\tau}u\|^l_{\dot{B}^{\frac{d}{2}-1}_{2,1}}
\lesssim\|T_{u}\mathbb{P}\nabla\cdot\tau+T_{\mathbb{P}
\nabla\cdot\tau}u\|_{\dot{B}^{\frac{d}{2}-1,\frac{d}{2}-2}}
\lesssim\|u\|_{\dot{B}^{\frac{d}{2}-1}_{2,1}}\|\mathbb{P}\nabla\cdot
\tau\|_{B^{\frac{d}{2},\frac{d}{2}-1}},\\
&&\|R(\mathbb{P}\nabla\cdot\tau,u)\|^l_{\dot{B}^{\frac{d}{2}-1}_{2,1}}
\lesssim \|R(\mathbb{P}\nabla\cdot\tau,u)\|_{\dot{B}^{\frac{d}{2}-1,\frac{d}{2}
}}\lesssim\|u\|_{\dot{B}^{\frac{d}{2}+1}_{2,1}}\|\mathbb{P}\nabla\cdot\tau\|
_{B^{\frac{d}{2}-2,\frac{d}{2}-1}},
\end{eqnarray*}
which imply \eqref{Ej2}.
Let us return to the terms $\tilde{E}_j^{2,2},\tilde{E}_j^{2,3}$. Noticing that the incompressible condition on $u$, we have the following equalities:
\begin{equation}\label{fenbujifen1}
\begin{split}
&[\nabla u\cdot\nabla\tau]^i\triangleq\sum_{j,k}\partial_ju^k\partial_k\tau^{i,j}=\sum_{j,k}
\partial_k(\partial_ju^k\tau^{i,j}),\\
&[\nabla u\cdot\nabla\Delta^{-1}\nabla\cdot\nabla\cdot\tau]^i\triangleq\sum_{k}\partial_iu^k
\partial_k\Delta^{-1}\nabla\cdot\nabla\cdot\tau
=\sum_{k}
\partial_k(\partial_iu^k
\nabla\Delta^{-1}\nabla\cdot\nabla\cdot\tau).
\end{split}
\end{equation}
Combining \eqref{fenbujifen1} and Remark \ref{BY}, we have
\begin{equation}\label{Ej22Ej23}
\begin{split}
&\int_0^t\sum_{j\leq0}2^{j(\frac{d}{2}-1)}\big|\tilde{E}_j^{2,2}+\tilde{E}_j^{2,3}\big|/
(\|\dot{\Delta}_ju\|_{L^2}+
\|\dot{\Delta}_j\Lambda^{-1}\mathbb{P}\nabla\cdot\tau\|_{L^2})\mathrm{d}t' \\
&\lesssim\int_0^t\|\nabla u\cdot\nabla\tau\|^l_{\dot{B}^{\frac{d}{2}-2}_{2,1}}+\|\nabla u\cdot\nabla\Delta^{-1}\nabla\cdot\nabla\cdot\tau\|^l_{\dot{B}^{\frac{d}{2}-2}_{2,1}}
\mathrm{d}t'\\
&\lesssim\int_0^t\|\nabla u\otimes\tau\|_{\dot{B}^{\frac{d}{2}-1,\frac{d}{2}}}+\|\nabla u\otimes\Delta^{-1}\nabla\cdot\nabla\cdot\tau\|_{\dot{B}^{\frac{d}{2}-1,\frac{d}{2}}}
\mathrm{d}t'\\
& \lesssim \sup_t\|\tau\|_{\dot{B}^{\frac{d}{2}-1,\frac{d}{2}}}\int_0^t\|u\|_
{\dot{B}^{\frac{d}{2}+1}_{2,1}}\mathrm{d}t'.
\end{split}
\end{equation}
Therefore, it follows from \eqref{Ej21} and \eqref{Ej22Ej23} that
\begin{equation}\label{sum2}
\int_0^t\sum_{j\leq0}2^{j(\frac{d}{2}-1)}|\tilde{E}_j^{2}|/(\|\dot{\Delta}_ju\|_{L^2}+
\|\dot{\Delta}_j\Lambda^{-1}\mathbb{P}\nabla\cdot\tau\|_{L^2})\mathrm{d}t' \lesssim \mathcal{E}^2(t).
\end{equation}

Now we consider $\tilde{E}_j^3$.  Applying  Proposition \ref{Profile}, we have $\tilde{E}_j^3=\tilde{E}_j^{3,1}+\tilde{E}_j^{3,2}+\tilde{E}_j^{3,3}$, where
\begin{equation*}
\begin{split}
&\tilde{E}_j^{3,1}=K_1\big((\dot{\Delta}_j\mathbb{P}(u\cdot\nabla
 u),\dot{\Delta}_j\mathbb{P}\nabla\cdot\tau)+(\dot{\Delta}_j\mathbb{P}
(u\cdot\nabla\mathbb{P}\nabla\cdot\tau),\dot{\Delta}_ju)\big), \\
&\tilde{E}_j^{3,2}=K_1(\dot{\Delta}_j\mathbb{P}(\nabla u\cdot\nabla\tau),\dot{\Delta}_ju),\\
&\tilde{E}_j^{3,3}=-K_1(\dot{\Delta}_j\mathbb{P}(\nabla u\cdot\nabla\Delta^{-1}\nabla\cdot\nabla\cdot\tau),\dot{\Delta}_ju).
\end{split}
\end{equation*}
For the term $\tilde{E}_j^{3,1}$, noticing that $j\leq0$ and  Proposition \ref{pro2}, we obtain
\begin{equation*}
\begin{split}
|\tilde{E}_j^{3,1}| \lesssim
c_j\|u\|_{\dot{B}^{\frac{d}{2}+1}_{2,1}}\Big(2^{-j(\frac{d}{2}-1)}\|u\|_{\dot{B}^{\frac{d}{2}-1}
_{2,1}}
\|\dot{\Delta}_j\mathbb{P}\nabla\cdot\tau\|_{L^2}+2^{-j(\frac{d}{2}-2)}\|\mathbb{P}
\nabla\cdot\tau\|_{\dot{B}^{\frac{d}{2}-2,\frac{d}{2}-1}}\|\dot{\Delta}_ju\|_{L^2}\Big).
\end{split}
\end{equation*}
Thanks to
$$\|\dot{\Delta}_j\mathbb{P}\nabla\cdot\tau\|_{L^2}\lesssim
2^j\|\dot{\Delta}_j\Lambda^{-1}\mathbb{P}\nabla\cdot\tau\|_{L^2},$$
and $j\le 0$, we have
\begin{equation}\label{Ej31}
\begin{split}
&\int_0^t\sum_{j\leq0}2^{j(\frac{d}{2}-1)}|\tilde{E}_j^{3,1}|/(\|\dot{\Delta}_ju\|_{L^2}+
\|\dot{\Delta}_j\Lambda^{-1}\mathbb{P}\nabla\cdot\tau\|_{L^2})\mathrm{d}t'  \\
 &\lesssim \big( \sup_t\|u\|_{\dot{B}^{\frac{d}{2}-1}_{2,1}} + \sup_t\|\tau\|_{\dot{B}^{\frac{d}{2}-1,\frac{d}{2}}}\big)\int_0^t\|u\|
_{\dot{B}^{\frac{d}{2}+1}_{2,1}}\mathrm{d}t'.
\end{split}
\end{equation}
Dealing with the terms $\tilde{E}_j^{3,2}, \tilde{E}_j^{3,3}$ in the same way as used in the proof of $\tilde{E}_j^{2,2}, \tilde{E}_j^{2,3}$, we have
\begin{equation}\label{Ej32Ej33}
\begin{split}
&\int_0^t\sum_{j\leq0}2^{j(\frac{d}{2}-1)}\big|\tilde{E}_j^{3,2}+\tilde{E}_j^{3,3}\big|/
(\|\dot{\Delta}_ju\|_{L^2}+
\|\dot{\Delta}_j\Lambda^{-1}\mathbb{P}\nabla\cdot\tau\|_{L^2})\mathrm{d}t' \\
& \lesssim \sup_t\|\tau\|_{\dot{B}^{\frac{d}{2}-1,\frac{d}{2}}}\int_0^t\|u\|_
{\dot{B}^{\frac{d}{2}+1}_{2,1}}\mathrm{d}t'.
\end{split}
\end{equation}

Then combining \eqref{Ej31} and \eqref{Ej32Ej33}, we deduce that
\begin{equation}\label{sum3}
\int_0^t\sum_{j\leq0}2^{j(\frac{d}{2}-1)}|\tilde{E}_j^{3}|/(\|\dot{\Delta}_ju\|_{L^2}+
\|\dot{\Delta}_j\Lambda^{-1}\mathbb{P}\nabla\cdot\tau\|_{L^2})\mathrm{d}t' \lesssim \mathcal{E}^2(t).
\end{equation}
\vskip 0.1cm
\textit{\bf{Step 3: Estimate for $\tilde{F}_j^2, \tilde{F}_j^3$.}}\label{step7}
\vskip 0.1cm
Using Proposition \ref{Profile}, we divide $\tilde{F}^2_j$ into three terms
\begin{equation*}
\begin{split}
&\tilde{F}_j^{2,1}=-\nu(\dot{\Delta}_j\mathbb{P}(u\cdot\nabla\mathbb{P}\nabla\cdot\tau),
\dot{\Delta}_j\mathbb{P}\nabla\cdot\tau),\\
&\tilde{F}_j^{2,2}=-\nu(\dot{\Delta}_j\mathbb{P}(\nabla u\cdot\nabla\tau),\dot{\Delta}_j\mathbb{P}\nabla\cdot\tau),\\
&\tilde{F}_j^{2,3}=\nu(\dot{\Delta}_j\mathbb{P}(\nabla u\cdot\nabla\Delta^{-1}\nabla\cdot\nabla\cdot\tau),\dot{\Delta}_j
\mathbb{P}\nabla\cdot\tau).
\end{split}
\end{equation*}
By Proposition \ref{pro2}, we obtain
\begin{equation*}
|(\dot{\Delta}_j(u\cdot\nabla\mathbb{P}\nabla\cdot\tau),\dot{\Delta}_j\mathbb{P}
\nabla\cdot\tau)|\lesssim c_j2^{-j(\frac{d}{2}-1)}\|u\|_{\dot{B}^{\frac{d}{2}+1}_{2,1}}
\|\mathbb{P}\nabla\cdot\tau\|_{\dot{B}^{\frac{d}{2}-2,\frac{d}{2}-1}}\|
\dot{\Delta}_j\mathbb{P}\nabla\cdot\tau\|_{L^2},
\end{equation*}
which implies
\begin{equation}\label{Fj21}
\begin{split}
&\int_0^t\sum_{j>0}2^{j(\frac{d}{2}-1)}|\tilde{F}_j^{2,1}|/(\|\dot{\Delta}_ju\|_{L^2}+
\|\dot{\Delta}_j\mathbb{P}\nabla\cdot\tau\|_{L^2})\mathrm{d}t'\\
 &\lesssim \sup_t\|\tau\|_{\dot{B}^{\frac{d}{2}-1,\frac{d}{2}}}\int_0^t\|u\|_
{\dot{B}^{\frac{d}{2}+1}_{2,1}}\mathrm{d}t'.
\end{split}
\end{equation}
Then we consider $\tilde{F}_j^{2,2}$ and $\tilde{F}_j^{2,3}$. Thanks to Remark \ref{BY} and \eqref{fenbujifen1}, we obtain
\begin{equation}\label{Fj22Fj23}
\begin{split}
&\int_0^t\sum_{j>0}2^{j(\frac{d}{2}-1)}\big|\tilde{F}^{2,2}_j+\tilde{F}^{2,3}_j\big|/(\|\dot{\Delta}_ju\|_{L^2}+
\|\dot{\Delta}_j\Lambda^{-1}\mathbb{P}\nabla\cdot\tau\|_{L^2})\mathrm{d}t'  \\
&\lesssim\int_0^t\|\nabla u\otimes\tau\|_{\dot{B}^{\frac{d}{2}-1,\frac{d}{2}}}+\|\nabla u\otimes\Delta^{-1}\nabla\cdot\nabla\cdot\tau\|_{\dot{B}^{\frac{d}{2}-1,\frac{d}{2}}}
\mathrm{d}t'\\
& \lesssim \sup_t\|\tau\|_{\dot{B}^{\frac{d}{2}-1,\frac{d}{2}}}\int_0^t\|u\|_
{\dot{B}^{\frac{d}{2}+1}_{2,1}}\mathrm{d}t'.
\end{split}
\end{equation}
Combining \eqref{Fj21} and \eqref{Fj22Fj23}, we obtain
\begin{equation}\label{sum4}
\int_0^t\sum_{j\leq0}2^{j(\frac{d}{2}-1)}|\tilde{F}_j^{2}|/(\|\dot{\Delta}_ju\|_{L^2}+
\|\dot{\Delta}_j\Lambda^{-1}\mathbb{P}\nabla\cdot\tau\|_{L^2})\mathrm{d}t' \lesssim \mathcal{E}^2(t).
\end{equation}

For the last term $\tilde{F}^3_j$, we also have $\tilde{F}^3_j=\tilde{F}^{3,1}_j+\tilde{F}_j^{3,2}+\tilde{F}^{3,3}_j$, where
\begin{equation*}
\begin{split}
&\tilde{F}^{3,1}_j=\frac{\mu_2}{2}\big((\dot{\Delta}_j(u\cdot\nabla u),\dot{\Delta}_j\mathbb{P}\nabla\cdot\tau)+(\dot{\Delta}_j(u\cdot\nabla
\mathbb{P}\nabla\cdot\tau),\dot{\Delta}_ju)\big),\\
&\tilde{F}^{3,2}_j=\frac{\mu_2}{2}(\dot{\Delta}_j\mathbb{P}(\nabla u\cdot\nabla\tau),\dot{\Delta}_ju),\\
&\tilde{F}^{3,3}_j=-\frac{\mu_2}{2}(\dot{\Delta}_j\mathbb{P}(\nabla u\cdot\nabla\Delta^{-1}\nabla\cdot\nabla\cdot\tau),\dot{\Delta}_ju).
\end{split}
\end{equation*}
Let us first consider $\tilde{F}_j^{3,1}$. By  Proposition \ref{pro2}, we obtain
\begin{equation*}
\begin{split}
&\big|(\dot{\Delta}_j(u\cdot\nabla u),\dot{\Delta}_j\mathbb{P}\nabla\cdot\tau)+
(\dot{\Delta}_j(u\cdot\nabla\mathbb{P}\nabla\cdot\tau),\dot{\Delta}_ju)\big|
\\&\lesssim c_j\|u\|_{\dot{B}^{\frac{d}{2}+1}_{2,1}}\big(2^{-j(\frac{d}{2}-1)}\|u\|_{\dot{B}^{\frac{d}{2}-1}_{2,1}}\|\mathbb{P}\nabla\cdot\tau\|_{L^2}
+2^{-j(\frac{d}{2}-1)}\|\mathbb{P}\nabla\cdot\tau\|_{\dot{B}^{\frac{d}{2}-2,
\frac{d}{2}-1}}\|\dot{\Delta}_ju\|_{L^2}\big).
\end{split}
\end{equation*}
Then, we have
\begin{equation}\label{ssum4}
\begin{split}
&\int_0^t\sum_{j>0}2^{j(\frac{d}{2}-1)}|\tilde{F}_j^{3,1}|/(\|\dot{\Delta}_ju\|_{L^2}+
\|\dot{\Delta}_j\Lambda^{-1}\mathbb{P}\nabla\cdot\tau\|_{L^2})\mathrm{d}t'\\
&\lesssim\big(\sup_t\|u\|_{\dot{B}^{\frac{d}{2}-1}_{2,1}}+
\sup_t\|\tau\|_{\dot{B}^{\frac{d}{2}-1,\frac{d}{2}}}\big)\int_0^t\!\|u\|_{\dot{B}
^{\frac{d}{2}+1}_{2,1}}\mathrm{d}t'.
\end{split}
\end{equation}
Dealing with the terms $\tilde{F}_j^{3,2}, \tilde{F}_j^{3,3}$ in the same way as used in the proof of $\tilde{F}_j^{2,2}, \tilde{F}_j^{2,3}$, we have
\begin{equation}\label{Fj32Fj33}
\begin{split}
&\int_0^t\sum_{j>0}2^{j(\frac{d}{2}-1)}\big|\tilde{F}^{2,2}_j+\tilde{F}^{2,3}_j\big|/(\|\dot{\Delta}_ju\|_{L^2}+
\|\dot{\Delta}_j\Lambda^{-1}\mathbb{P}\nabla\cdot\tau\|_{L^2})\mathrm{d}t'  \\
& \lesssim \sup_t\|\tau\|_{\dot{B}^{\frac{d}{2}-1,\frac{d}{2}}}\int_0^t\|u\|_
{\dot{B}^{\frac{d}{2}+1}_{2,1}}\mathrm{d}t'.
\end{split}
\end{equation}
Combining \eqref{ssum4}, \eqref{Fj32Fj33}, we gather
\begin{equation}\label{sum5}
\begin{split}
\int_0^t\sum_{j>0}2^{j(\frac{d}{2}-1)}|\tilde{F}_j^3|/(\|\dot{\Delta}_ju\|_{L^2}+
\|\dot{\Delta}_j\Lambda^{-1}\mathbb{P}\nabla\cdot\tau\|_{L^2})\mathrm{d}t' \lesssim \mathcal{E}^2(t).
\end{split}
\end{equation}
 According to \eqref{sum1}, \eqref{sum2}, \eqref{sum3}, \eqref{sum4} and \eqref{sum5}, we complete the proof of Proposition \ref{pioriestimate}.                                                        \end{proof}

\section{The global existence and the uniqueness}\label{Global}
\setcounter{section}{4}\setcounter{equation}{0}

This section is devoted to the proof of Theorem \ref{Maintheorem}.
\subsection{Approximate solutions and the uniform estimates}
The construction of approximate solutions is based on the following local existence theorem.
\begin{proposition}\label{pro3}
Let $(u_0,\tau_0)$ be an initial data in $H^s$ with $s$ strictly greater than $\frac{d}{2}$. Then a unique strictly positive time $T$ exists so that a unique solution $(u,\tau)$ exists such that
$$u\in C([0,T);H^s)\cap L^2_{loc}(0,T;H^{s+1});\ \tau\in C([0,T);H^s).$$
Moreover, the solution $(u,\tau)$ can be continued beyond $T$ if
$$\sup_T \|\tau\|_{\dot{B}^{\frac{d}{2}-1,\frac{d}{2}}}+\int_0^T\|u\|_{\dot{B}^{\frac{d}{2}+1}_{2,1}}\mathrm{d}t'<\infty.$$

\end{proposition}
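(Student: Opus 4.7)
The plan is to establish local well-posedness by a standard Friedrichs-type approximation scheme, and then derive the continuation criterion by a more refined analysis that uses transport-equation estimates for $\tau$ and parabolic smoothing for $u$. First, I would introduce the spectral cut-off operators $J_n u = \mathcal{F}^{-1}(\mathbf{1}_{|\xi|\le n}\hat u)$ and consider the approximate system
\begin{equation*}
\begin{cases}
\partial_t u_n - \nu\Delta J_n u_n = -J_n\mathbb{P}(J_n u_n \cdot \nabla J_n u_n) + \mu_1 J_n\mathbb{P}\nabla\cdot J_n\tau_n,\\
\partial_t \tau_n = -J_n(J_n u_n\cdot\nabla J_n\tau_n) - J_n Q(J_n\tau_n, \nabla J_n u_n) + \mu_2 J_n D(J_n u_n),\\
(u_n,\tau_n)|_{t=0} = (J_n u_0, J_n \tau_0).
\end{cases}
\end{equation*}
This is an ODE in $L^2\times L^2$ and yields a unique global-in-time solution with values in $H^\infty$. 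Performing an $H^s$ energy estimate, using the commutator/Moser-type estimate $\|[J_n,u_n\cdot\nabla]f\|_{H^s}\lesssim\|\nabla u_n\|_{L^\infty}\|f\|_{H^s}+\|u_n\|_{H^s}\|\nabla f\|_{L^\infty}$ together with $H^s\hookrightarrow L^\infty$ (since $s>d/2$) and absorbing $-\nu\|\nabla u_n\|_{H^s}^2$ on the left, I obtain
\begin{equation*}
\tfrac{d}{dt}\bigl(\|u_n\|_{H^s}^2+\|\tau_n\|_{H^s}^2\bigr) + \nu\|\nabla u_n\|_{H^s}^2 \le C\bigl(\|u_n\|_{H^s}+\|\tau_n\|_{H^s}\bigr)\bigl(\|u_n\|_{H^s}^2+\|\tau_n\|_{H^s}^2\bigr),
\end{equation*}
from which a uniform lifespan $T>0$ follows by a bootstrap / comparison argument with the ODE $y' = Cy^{3/2}$.

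Second, I would pass to the limit: the uniform bounds in $L^\infty_T(H^s)\cap L^2_T(H^{s+1})$ for $u_n$ and in $L^\infty_T(H^s)$ for $\tau_n$, combined with uniform bounds on $\partial_t u_n$ and $\partial_t \tau_n$ in some weaker space (say $H^{s-2}$), give strong convergence on every compact subset by Aubin--Lions. This is enough to identify the nonlinear limits and obtain a solution $(u,\tau)$ of \eqref{OBA} in the desired class; continuity in time follows from Proposition \ref{smoothing} (for $u$) and Proposition \ref{transport} (for $\tau$). Uniqueness is carried out by an energy estimate on the difference $(\delta u,\delta\tau)$ of two solutions in $L^2\times L^2$ (or in a slightly weaker space if needed to avoid a loss of derivative from $Q(\tau,\nabla u)$), using again that $\nabla u\in L^1_T(L^\infty)$ by the $H^{s+1}$ bound on $u$ and Besov embedding.

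Third, and this is the main point of the statement, comes the continuation criterion. Assume the quantity
\begin{equation*}
\mathcal{A}(T) \triangleq \sup_{[0,T)}\|\tau\|_{\dot{B}^{\frac{d}{2}-1,\frac{d}{2}}} + \int_0^T\|u\|_{\dot{B}^{\frac{d}{2}+1}_{2,1}}\,dt'
\end{equation*}
is finite. I would propagate the $H^s$ norm as follows: apply Proposition \ref{smoothing} to the $u$-equation rewritten as $\partial_t u-\nu\Delta u = \mathbb{P}(-u\cdot\nabla u+\mu_1\nabla\cdot\tau)$ to control $\|u\|_{\tilde L^\infty_T(H^s)\cap \tilde L^1_T(H^{s+2})}$ by $\|u_0\|_{H^s}$, by $\int_0^T\|\tau\|_{H^{s+1}}$ via $\mathbb{P}\nabla\cdot\tau$, and by the convective term, which after Bony decomposition is bounded by $\int_0^T\|\nabla u\|_{L^\infty}\|u\|_{H^s}$; then apply Proposition \ref{transport} to the $\tau$-equation (with forcing $\mu_2 D(u)-Q(\tau,\nabla u)$) to bound $\|\tau\|_{\tilde L^\infty_T(H^s)}$ by $\|\tau_0\|_{H^s}$, by $\int_0^T\|u\|_{H^{s+1}}$, and by $\int_0^T\|\nabla u\|_{\dot{B}^{d/2}_{2,1}\cap L^\infty}\|\tau\|_{H^s}$. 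The delicate step is to re-express $\|\nabla u\|_{L^\infty}$ and the transport-coefficient integral in terms of $\mathcal{A}(T)$, which is done by the logarithmic interpolation inequality of Proposition \ref{log}: one bounds $\|u\|_{L^1_T \dot B^{d/2+1}_{2,1}}$ directly by $\mathcal{A}(T)$, and converts a potential loss into $\log\bigl(e+\|u\|_{\tilde L^\infty_T H^s}+\|\tau\|_{\tilde L^\infty_T H^s}\bigr)$. Closing with a Gronwall-type argument on $Y(T)\triangleq\|u\|_{\tilde L^\infty_T H^s}+\|\tau\|_{\tilde L^\infty_T H^s}$ gives $Y(T)<\infty$, which together with the blow-up part of the local theory implies the solution can be extended past $T$. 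The hard part, as usual, is this logarithmic closure — controlling $\|\nabla u\|_{L^1_T L^\infty}$ by $\int_0^T\|u\|_{\dot B^{d/2+1}_{2,1}}$ up to a log-loss in the high Sobolev norm itself, and similarly handling the nonlinear $Q(\tau,\nabla u)$ term with only the regularity on $\tau$ provided by $\mathcal{A}(T)$.
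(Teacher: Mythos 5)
The paper does not actually prove this proposition: it states that the argument is a slight modification of Theorem~1.1 in \cite{J.-Y.Chemin} and omits the details. Your outline (Friedrichs regularization, tame $H^s$ energy estimates, Aubin--Lions compactness, and propagation of the $H^s$ norm under the stated criterion) is essentially that standard argument, so your overall approach is the intended one.

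Two points in your write-up need repair. First, the differential inequality you claim, with right-hand side $C(\|u_n\|_{H^s}+\|\tau_n\|_{H^s})(\|u_n\|_{H^s}^2+\|\tau_n\|_{H^s}^2)$, is not attainable: the contribution of $Q(\tau_n,\nabla u_n)$ to the $\tau_n$-estimate is of size $\|\tau_n\|_{H^s}^2\|u_n\|_{H^{s+1}}$ (it costs a full derivative of $u_n$ and admits no cancellation), so it must be Young-ed against the dissipation $\nu\|\nabla u_n\|_{H^s}^2$, leaving a quartic remainder $C\|\tau_n\|_{H^s}^4$. The resulting ODE $y'\le C(y^{3/2}+y^2)$ still gives a uniform positive lifespan, so the conclusion survives, but the inequality as written is false and the comparison ODE is not $y'=Cy^{3/2}$. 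Second, the logarithmic interpolation of Proposition~\ref{log} is unnecessary for the continuation step: since $\dot B^{\frac d2}_{2,1}\hookrightarrow L^\infty$, one has $\|\nabla u\|_{L^1_T(L^\infty)}\lesssim\int_0^T\|u\|_{\dot B^{\frac d2+1}_{2,1}}\,\mathrm{d}t'$ with no loss, and $\sup_{[0,T)}\|\tau\|_{\dot B^{\frac d2-1,\frac d2}}$ controls $\|\tau\|_{L^\infty_T(L^\infty)}$; plain tame (Moser/Kato--Ponce) estimates for $u\cdot\nabla u$, $u\cdot\nabla\tau$ and $Q(\tau,\nabla u)$ then close a linear Gronwall inequality for $\|u\|_{L^\infty_t(H^s)}+\nu^{1/2}\|u\|_{L^2_t(H^{s+1})}+\|\tau\|_{L^\infty_t(H^s)}$ directly. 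The logarithmic machinery belongs to the uniqueness argument of Section~\ref{Global} (carried out a derivative below scaling), not to the blow-up criterion here.
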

\begin{proof}
The proof is very similar to the Theorem 1.1 in \cite{J.-Y.Chemin}, here we omit it since we only have made a slight modification of its proof.
\end{proof}
Set
$\mathcal{C}_n\triangleq\{\xi\in\mathbb{R}^d\,\ |\ \ n^{-1}\leq|\xi|\leq n\}$. We build the approximate solution $(u^{n},\tau^{n})$ solving the system
\begin{equation}\label{OBN}
\begin{cases}
u^{{n}}_t+u^{{n}}\cdot\nabla u^{{n}}-\nu\Delta u^{{n}}-\nabla p=\mu_1\nabla\cdot\tau^{{n}}\ \ \\
\tau^{{n}}_t+u^{{n}}\cdot\nabla\tau^{{n}}+Q(\tau^{{n}},\nabla u^{{n}})=\mu_2D(u^{{n}}),  \ \\
\nabla\cdot u^{{n}}=0, \\
u^n(0,x)=J_nu_0;\ \  \tau^{n}(0,x)=J_n\tau_0,
\end{cases}
\end{equation}
where $\mathcal{F}(J_nU)(\xi)=I_{\mathcal{C}_n}(\xi)\mathcal{F}U(\xi)$ with $I_{\mathcal{C}_n}$ the smooth cut-off functions supported in $\mathcal{C}_n$.
Using the direct computations, we gather that
\begin{equation}\label{lim}
\lim_{n\rightarrow \infty} \|J_n u_0-u_0\|_{\dot{B}^{\frac{d}{2}-1}_{2,1}}=0; \ \ \ \lim_{n\rightarrow \infty}\|J_n \tau_0-\tau_0\|_{\dot{B}^{\frac{d}{2}-1,\frac{d}{2}}}=0.
\end{equation}
Indeed, it is easy to see that $J_nu_0,\ J_n\tau_0\in H^s$ for all $s>0$. Therefore, applying  Proposition \ref{pro3}, we can obtain that there exists a maximal existence time $T_n>0$ such that for all $s>0$ the system \eqref{OBN} has a unique solution $(u^n,\tau^n)$,
\begin{equation*}
u^n\in C([0,T_n);H^s)\cap L^2_{loc}(0,T_n;H^{s+1});\ \ \ \tau^n\in C([0,T_n);H^s).
\end{equation*}
Using the definition of the Besov space, it is easy to check that
\begin{eqnarray*}
&&u^n\in C([0,T_n);\dot{B}^{\frac{d}{2}-1}_{2,1})\cap L^1_{loc}(0,T_n;\dot{B}^
{\frac{d}{2}+1}_{2,1});\\
&&\tau^n\in C([0,T_n);\dot{B}^{\frac{d}{2}-1,\frac{d}{2}}),\ \ \mathbb{P}\nabla\cdot\tau^n\in L^1_{loc}(0,T_n;\dot{B}^{\frac{d}{2},\frac{d}{2}-1}).
\end{eqnarray*}
Let us define
$$T_n^{*}=\sup \{ t\in[0,T_n)\ |\ \ \mathcal{E}^n(t)\leq\widetilde{C}\mathcal{E}_0\}.$$

Firstly, we claim that
$$T^{*}_n=T_n.$$
Using the continuity argument, it suffices to show that for all $n\in\mathbb{N}$,$$\mathcal{E}^n(t)\leq\frac{1}{2}\widetilde{C}\mathcal{E}_0.$$
In fact, set $\widetilde{C}=4C_1$, and choose $\mathcal{E}_0$ small enough such that $\mathcal{E}_0\leq\frac{1}{16C_1C_2}$, then combine Proposition \ref{pioriestimate}, we obtain
\begin{equation*}
\begin{split}
\mathcal{E}^n(t)\leq \big(C_1+16C_1^2C_2\mathcal{E}_0\big)\mathcal{E}_0
\leq \frac{1}{2}\widetilde{C}\mathcal{E}_0.
\end{split}
\end{equation*}
In conclusion, we construct a sequence of approximate solution $(u^n, \tau^n)$ on $[0,T_n)$ satisfying
\begin{equation}\label{ETNS}
\mathcal{E}^n(T_n)\leq4C_1\mathcal{E}_0,
\end{equation}
for any $n\in\mathbb{N}$.
Thanks to \eqref{ETNS}, we can easily obtain that $T_n=\infty$ by a direct application of Proposition \ref{pro3}. To sum up, we get that for all $t>0$, we have
\begin{equation}\label{ETN}
\mathcal{E}^n(t)\leq4C_1\mathcal{E}_0.
\end{equation}

\subsection{The existence}
In this part we will use a standard compact argument to  show that,
up to an extraction, the sequence $((u^n,\tau^n))_{n\in\mathbb{N}}$
converges in $\mathcal{D}'(\mathbb{R}^+\times\mathbb{R}^d)$ to a
solution $(u,\tau)$ of \eqref{OBA} which has the desired regularity
properties.

It is convenient to split $(u^n,\tau^n)$ into linear part and discrepant part. More precisely, we denote by $(u^n_L,\tau^n_L)$ the solution to
\begin{equation*}\label{}
\begin{cases}
\partial_t{u^n_L}-\nu\Delta u_L^{{n}}-\mu_1\nabla\cdot\tau^{{n}}_L-\nabla p=0,\ \ \\
\partial_t{\tau^n_L}-\mu_2D(u^n_L)=0,  \ \\
\nabla\cdot u^{{n}}_L=0, \\
u^n_L(0,x)=J_nu_0;\ \  \tau^{n}_L(0,x)=J_n\tau_0.
\end{cases}
\end{equation*}
By Proposition \ref{pioriestimate}, we can easily get $\mathcal{E}^n_L(t)\lesssim\mathcal{E}(0)$, where $\mathcal{E}^n_L(t)$ is the energy of $(u^n_L,\tau^n_L)$ in form of \eqref{energytotal}.
Also, we denote $(u_L,\tau_L)$ the solution to
\begin{equation*}\label{}
\begin{cases}
\partial_t{u_L}-\nu\Delta u_L-\mu_1\nabla\cdot\tau_L-\nabla p=0,\ \ \\
\partial_t{\tau_L}-\mu_2D(u_L)=0,  \ \\
\nabla\cdot u_L=0, \\
u_L(0)=u_0;\ \  \tau_L(0)=\tau_0.
\end{cases}
\end{equation*}
It is easy to check that
\begin{equation}\label{uL}\begin{aligned}
&u^n_L\ \longrightarrow \ u_L \quad \text{in}\quad
C(\mathbb{R}^+;\dot{B}^{\frac{d}{2}-1}_{2,1})\cap
L^1(\mathbb{R}^+;\dot{B}^{\frac{d}{2}+1}_{2,1}),\\ &\tau^n_L\
\longrightarrow\ \tau_L\quad  \text{in}\quad
C(\mathbb{R}^+;\dot{B}^{\frac{d}{2}-1,\frac{d}{2}});\quad\mathbb{P}\nabla\cdot\tau^n_L\
\longrightarrow\ \mathbb{P}\nabla\cdot\tau_L \quad \text{in}\quad
L^1(\mathbb{R}^+;\dot{B}^{\frac{d}{2},\frac{d}{2}-1}).
\end{aligned}\end{equation}
Denote $(\overline{u}^n,\overline{\tau}^n)\triangleq(u^n-u^n_L,\tau^n-\tau^n_L)$, thanks to \eqref{ETN}, we also get
\begin{eqnarray*}
&&\overline{u}^n\in C(\RR^+;\dot{B}^{\frac{d}{2}-1}_{2,1})\cap L^1(\RR^+;\dot{B}^{\frac{d}{2}+1}_{2,1}),\\
&&\overline{\tau}^n\in C(\RR^+;\dot{B}^{\frac{d}{2}-1,\frac{d}{2}});\ \ \
\mathbb{P}\nabla\cdot\overline{\tau}^n\in L^1(\RR^+;\dot{B}^{\frac{d}{2},\frac{d}{2}-1}).
\end{eqnarray*}
\begin{lemma}\label{LC}
$((\overline{u}^n,\overline{\tau}^n))_{n\in\mathbb{N}}$ is uniformly bounded in $
C^{\frac{1}{2}}_{loc}(\mathbb{R}^+;\dot{B}^{\frac{d}{2}-2}_{2,1})\times C^{\frac{1}{2}}_{loc}(\mathbb{R}^+;\dot{B}^{\frac{d}{2}-1}_{2,1})$.
\end{lemma}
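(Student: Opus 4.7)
The plan is to reduce the H\"older-in-time claim to an $L^2_{\mathrm{loc}}$-in-time bound on the time derivatives. Subtracting the linear system defining $(u_L^n,\tau_L^n)$ from \eqref{OBN}, the pair $(\overline u^n,\overline\tau^n)=(u^n-u_L^n,\tau^n-\tau_L^n)$ solves
\begin{equation*}
\begin{cases}
\partial_t\overline u^n-\nu\Delta\overline u^n-\mu_1\nabla\!\cdot\!\overline\tau^n+\nabla p=-u^n\!\cdot\!\nabla u^n,\\
\partial_t\overline\tau^n-\mu_2D(\overline u^n)=-u^n\!\cdot\!\nabla\tau^n-Q(\tau^n,\nabla u^n),\\
\nabla\!\cdot\!\overline u^n=0,\qquad \overline u^n(0)=0,\ \overline\tau^n(0)=0.
\end{cases}
\end{equation*}
For any $0\le s<t\le T<\infty$, Cauchy--Schwarz in time gives
$\|\overline u^n(t)-\overline u^n(s)\|_{\dot B^{d/2-2}_{2,1}}\le |t-s|^{1/2}\|\partial_t\overline u^n\|_{L^2_T(\dot B^{d/2-2}_{2,1})}$,
and analogously for $\overline\tau^n$. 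Thus it suffices to show that $\partial_t\overline u^n$ and $\partial_t\overline\tau^n$ are uniformly bounded in $L^2_T(\dot B^{d/2-2}_{2,1})$ and $L^2_T(\dot B^{d/2-1}_{2,1})$, respectively.

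For the velocity equation I would project by $\mathbb P$ to kill the pressure and then bound each term individually, relying entirely on the uniform estimate \eqref{ETN}. By interpolation between $L^\infty_T(\dot B^{d/2-1}_{2,1})$ and $L^1_T(\dot B^{d/2+1}_{2,1})$, one has $\overline u^n\in L^2_T(\dot B^{d/2}_{2,1})$ uniformly in $n$, so $\nu\Delta\overline u^n$ lies in $L^2_T(\dot B^{d/2-2}_{2,1})$. The term $\mu_1\mathbb P\nabla\!\cdot\!\overline\tau^n$ is controlled by the uniform bound on $\overline\tau^n$ in $L^\infty_T(\dot B^{d/2-1,d/2})$, noting the embedding $\dot B^{d/2-1,d/2}\hookrightarrow\dot B^{d/2-2,d/2-1}\hookrightarrow\dot B^{d/2-2}_{2,1}$ on a finite interval. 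For the convective term I use Proposition \ref{BONY} with $s_1=s_2=d/2-1$ to obtain $\|u^n\!\cdot\!\nabla u^n\|_{\dot B^{d/2-2}_{2,1}}\lesssim \|u^n\|_{\dot B^{d/2-1}_{2,1}}\|\nabla u^n\|_{\dot B^{d/2-1}_{2,1}}$, and the right-hand side is in $L^1_T$, hence in $L^2_T$ after bounding one factor by its uniform $L^\infty_T$ norm. The $\overline\tau^n$ equation is handled the same way: $D(\overline u^n)$ lies in $L^2_T(\dot B^{d/2-1}_{2,1})$ by the interpolation just noted, while $u^n\!\cdot\!\nabla\tau^n$ and $Q(\tau^n,\nabla u^n)$ are estimated by the product rules of Proposition \ref{BONY}, together with the hybrid-space product estimates alluded to in Section \ref{Appendix}, using that $\tau^n$ is uniformly bounded in $L^\infty_T(\dot B^{d/2-1,d/2})$ and $\nabla u^n$ in $L^2_T(\dot B^{d/2-1}_{2,1})$.

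The main technical obstacle is the term $u^n\!\cdot\!\nabla\tau^n$ together with $Q(\tau^n,\nabla u^n)$, because $\tau^n$ has different regularity in low and high frequencies (only $\dot B^{d/2-1}_{2,1}$ below and $\dot B^{d/2}_{2,1}$ above), while the target $\dot B^{d/2-1}_{2,1}$ is isotropic. I would split via Bony's decomposition $\dot T_{u^n}\nabla\tau^n+\dot T_{\nabla\tau^n}u^n+\dot R(u^n,\nabla\tau^n)$, estimating the low-frequency paraproducts with the $\dot B^{d/2-1}_{2,1}$-norm of $\tau^n$ and the high-frequency pieces with the $\dot B^{d/2}_{2,1}$-norm, in the spirit of the $\tilde E_j^{2,1}$ argument in Section \ref{PRIORI}; one may also use $\nabla\!\cdot u^n=0$ to rewrite $u^n\!\cdot\!\nabla\tau^n=\nabla\!\cdot(u^n\otimes\tau^n)$, shifting one derivative out. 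Once these two $L^2_T$ bounds are in place with constants depending only on $\mathcal{E}_0$ and $T$, Cauchy--Schwarz yields the desired uniform $C^{1/2}_{\mathrm{loc}}$ estimate.
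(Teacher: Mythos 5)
Your proposal is correct and follows essentially the same route as the paper: write the equations for $(\overline u^n,\overline\tau^n)$, bound $\partial_t\overline u^n$ and $\partial_t\overline\tau^n$ uniformly in $L^2_T(\dot B^{\frac d2-2}_{2,1})$ and $L^2_T(\dot B^{\frac d2-1}_{2,1})$ via the uniform energy bound \eqref{ETN} and the product laws, and conclude by Cauchy--Schwarz in time. The only remark is that the extra Bony splitting for $u^n\cdot\nabla\tau^n$ is not needed, since $\dot B^{\frac d2-1,\frac d2}=\dot B^{\frac d2-1}_{2,1}\cap\dot B^{\frac d2}_{2,1}$ already gives a uniform $L^\infty_T(\dot B^{\frac d2}_{2,1})$ bound on $\tau^n$, and that for $d=2$ the convective term should be written as $\nabla\cdot(u^n\otimes u^n)$ so that Proposition \ref{BONY} applies with $s_1+s_2=d-1>0$ rather than $s_1=s_2=0$.
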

\begin{proof}
Recall that
$$\partial_t\overline{\tau}^{{n}}=\mu_2D(\overline{u}^{{n}})-u^{{n}}\cdot\nabla\tau^{{n}}-Q(\tau^{{n}},\nabla u^{{n}}),$$
which combining with \eqref{ETN} and Proposition \ref{pro1}, we gather that
$$\|\partial_t\overline{\tau}^n\|_{L_T^2(\dot{B}^{\frac{d}{2}-1}_{2,1})}
\lesssim\|\overline{u}^n\|_{L^2_T(\dot{B}^{\frac{d}{2}}_{2,1})}+
\sup_T\|\tau^n\|_{\dot{B}^{\frac{d}{2}}_{2,1}}\|u^n\|_{L^2_T(\dot{B}^{\frac{d}{2}}_{2,1})}<\infty.$$
Then we get $\overline\tau^n\in C^{\frac{1}{2}}_{loc}(\mathbb{R}^+;\dot{B}^{\frac{d}{2}-1}_{2,1})$. On the other hand,
$$\partial_t\overline{u}^n=\mu_1\mathbb{P}\nabla\cdot\overline{\tau}^{{n}}+\nu\Delta \overline{u}^n+\mathbb{P}u^{{n}}\cdot\nabla u^{{n}}.$$
Proposition \ref{pro1} and \eqref{ETN} imply that
$$\|\partial_t\overline{u}^n\|_{L_T^2(\dot{B}^{\frac{d}{2}-2}_{2,1})}
\lesssim T^{\frac{1}{2}}\sup_T\|\overline{\tau}^n\|_{\dot{B}^{\frac{d}{2}-1,\frac{d}{2}}}+
(\sup_T\|u^n\|_{\dot{B}^{\frac{d}{2}-1}_{2,1}}+1)\|u^n\|_{L^2_T(\dot{B}^{\frac{d}
{2}}_{2,1})}<\infty,$$
which means that $\overline{u}^n\in C^{\frac{1}{2}}_{loc}(\mathbb{R}^+;\dot{B}^{\frac{d}{2}-2}_{2,1})$.
\end{proof}

Let us choose a sequence $(\phi_p)_{p\in\mathbb{N}}$ of smooth
cut-off functions supported in the ball $B(0,p+1)$ of $\mathbb{R}^d$
and equal to $1$ in a neighborhood of $B(0,p)$. Lemma \ref{LC}
ensures that
$$(\phi_p\overline{u}^n,\phi_p\overline{\tau}^n) \ \ \text{is uniformly equicontinuous in} \ \ C([0,p];\dot{B}^{\frac{d}{2}-2}_{2,1}\times\dot{B}^{\frac{d}{2}-1}_{2,1}).$$
We know that when $s>0$, $\dot{B}^s_{2,1}(K)\cong B^s_{2,1}(K)$ for
all compact set $K$ and $\dot{B}^s_{2,1}\hookrightarrow B^s_{2,1}$
when $s\leq0$. Then we can get
\begin{eqnarray}\label{equicontin}
&&(\phi_p\overline{u}^n,\phi_p\overline{\tau}^n) \ \ \text{is
uniformly equicontinuous in} \ \ C([0,p];
B^{\frac{d}{2}-2}_{2,1}\times
B^{\frac{d}{2}-1}_{2,1}),\\\label{unifromofn}
&&(\phi_p\overline{u}^n,\phi_p\overline{\tau}^n)\ \ \text{is
uniformly bounded in} \ \ \ C([0,p];B^{\frac{d}{2}-1}_{2,1}\times
B^{\frac{d}{2}}_{2,1}).
\end{eqnarray}
Moreover, we have the facts
$B^{\frac{d}{2}-1}_{2,1}(K)\hookrightarrow\hookrightarrow
B^{\frac{d}{2}-2}_{2,1}(K)$ and
$B^{\frac{d}{2}}_{2,1}(K)\hookrightarrow\hookrightarrow
B^{\frac{d}{2}-1}_{2,1}(K)$. Then, by Ascoli's Theorem and Cantor's
diagonal process, we get a distribution
$(\overline{u},\overline{\tau})$ such that
\begin{equation}\label{SL}
(\phi_p\overline{u}^n,\phi_p\overline{\tau}^n)
\longrightarrow(\phi_p\overline{u},\phi_p\overline{\tau})\ \ \ \text{in}\
 \ \ C([0,p]; B^{\frac{d}{2}-2}_{2,1}\times B^{\frac{d}{2}-1}_{2,1}).
\end{equation}
Denote $u\triangleq\overline{u}+u_L,\
\tau\triangleq\overline{\tau}+\tau_L$, we easily have
\begin{equation*}\label{}
(u^n,\tau^n)\longrightarrow(u,\tau)\ \ \ \text{in}\ \ \
\mathcal{S}'(\mathbb{R}^d\times\mathbb{R}^+),
\end{equation*}
With the help of \eqref{uL}, \eqref{unifromofn} and \eqref{SL},
following the argument as in \cite{R.DANCHIN2000}, it is routine
to verify that $(u,\tau)$ satisfies the system \eqref{OBA} in the
distribution sense. Moreover, we can infer from \eqref{ETN} that
\begin{equation} \label{UTSPACE}\begin{aligned}
&u\in L^\infty(\RR^+;\dot{B}^{\frac{d}{2}-1}_{2,1})\cap
L^1(\RR^+;\dot{B}^{\frac{d}{2}+1}_{2,1});
\\&\tau\in
L^\infty(\RR^+;\dot{B}^{\frac{d}{2}-1,\frac{d}{2}}),\,\,\mathbb{P}\nabla\cdot\tau\in
L^1(\RR^+;\dot{B}^{\frac{d}{2},\frac{d}{2}-1}).
\end{aligned}\end{equation}

At last, we have to show  the properties of continuity with respect
to time. The continuity of $u$ is straightforward. Indeed from the
equation of $u$, we have $
L^1_{loc}(\mathbb{R}^+;\dot{B}^{\frac{d}{2}-1}_{2,1})$ which imply
$u\in C(\RR^+;\dot{B}^{\frac{d}{2}-1}_{2,1}).$  As for $\tau$, by
the same argument as  in \cite{R.DANCHIN2000}, we get $\tau\in
C(\mathbb{R}^+;\dot{B}^{\frac{d}{2}-1,\frac{d}{2}})$.

\subsection{The uniqueness}\label{Uniqueness}
Assume $(u_1,\tau_1)$ and $(u_2,\tau_2)$ are two solutions of
\eqref{OBA} with the same initial data. Denote $\delta
u=u_1-u_2,\delta\tau=\tau_1-\tau_2$, then $(\delta u, \delta \tau)$
satisfies
\begin{equation}\label{cha}
\begin{cases}
(\delta u)_t-\nu \Delta\delta u=\mu_1\nabla\cdot \delta\tau -u_1\cdot\nabla \delta u-\delta u\cdot\nabla u_2,\\
(\delta\tau)_t+u_1\cdot\nabla\delta\tau=\mu_2D(\delta u)-\delta u \cdot\nabla\tau_2-Q(\tau_1,\nabla\delta u)-Q(\delta\tau,\nabla u_2),\\
\nabla\cdot\delta u=0,\\
\delta u(0,x)=0;\ \delta\tau(0,x)=0.
\end{cases}
\end{equation}
We shall work  in the following functional spaces
$$\mathcal{H}_T\triangleq\big(L^\infty(0,T;\dot{B}^{-1}_{d,\infty})\cap\widetilde{L}^1(0,T;\dot{B}^1_{
d,\infty})\big)^d\times L^\infty(0,T;\dot{B}^0_{d,\infty}).$$

We first state that $(\delta u, \delta\tau)\in \mathcal{H}_T$. From
the second equation of \eqref{cha} and Proposition \ref{Besovproperties}, Proposition \ref{BONY}, we have
\begin{equation*}
\begin{split}
&\|(\delta\tau)_t\|_{L^2_T(\dot{B}^0_{d,\infty})}
\lesssim\|(\delta\tau)_t\|_{L^2_T(\dot{B}^{\frac{d}{2}-1}_{2,1})}\\
&\lesssim\Big(\|u_1 \|_{L^2_T(\dot{B}^{\frac{d}{2}}_ {2,1})}+\|u_2
\|_{L^2_T(\dot{B}^{\frac{d}{2}}_ {2,1})}\Big)\Big(1
+\|\tau_1\|_{L^\infty_T(\dot{B}^\frac{d}{2}_{2,1})}+\|\tau_2\|_{L^\infty_T(\dot{B}^
\frac{d}{2}_{2,1})}\Big),
\end{split}
\end{equation*}
which means that $\delta\tau\in
C^{\frac{1}{2}}([0,T];\dot{B}^0_{d,\infty})$. A similar discussion
to $\delta \tau$ entails that $\delta u\in
C^{\frac{1}{2}}([0,T];\dot{B}^{-1}_{d,\infty})$.

Now, let us turn to the proof of estimates for $\delta\tau$. Applying Proposition
\ref{transport} to the second equation of \eqref{cha}, we have
\begin{equation*}
\begin{split}
\|\delta\tau\|_{\widetilde{L}^\infty_t(\dot{B}^0_{d,\infty})}
\lesssim& e^{C\int_0^t\|\nabla u_1\|_{\dot{B}^1_{d,1}}\mathrm{d}t'}\int_0^t e^{-C\int_0^{t'}\|\nabla u_1\|_{\dot{B}^1_{d,1}}\mathrm{d}s}\|F\|_{\dot{B}^0_{d,\infty}}\mathrm{d}t'
\\
\lesssim& e^{C\int_0^t\|\nabla
u_1\|_{\dot{B}^1_{d,1}}\mathrm{d}t'}\int_0^t\|F\|_{\dot{B}^0_{d,\infty}}\mathrm{d}t',
\end{split}
\end{equation*}
where $F=\mu_2D(\delta u)-\delta u
\cdot\nabla\tau_2-Q(\tau,\nabla\delta u)-Q(\delta\tau,\nabla u_2).$
By Proposition \ref{Besovproperties} and  Proposition \ref{BONY}, we get
\begin{equation*}
\begin{split}
\|F\|_{L^1_t(\dot{B}^0_{d,\infty})}\lesssim&\|u_2\|_{L^1_t(\dot{B}^{\frac{d}{2}+1}_{2,1})}
\|\delta\tau\|_{L^\infty_t(\dot{B}^0_{d,\infty})}\\
&+\big(1+\|\tau_1\|_{L^\infty_t(\dot{B}^{\frac{d}{2}}_{2,1})}+\|\tau_2\|_{L^\infty_t(\dot{B}^{\frac{d}{2}}_{2,1})}\big) \|\delta
u\|_{L^1_t(\dot{B}^1_{d,1})}.
\end{split}
\end{equation*}
We then finally obtain
\begin{equation}\label{deltatau}
\begin{split}
\|\delta\tau\|_{L^\infty_t(\dot{B}^0_{d,\infty}) }\lesssim&
e^{C\int_0^t\|\nabla
u_1\|_{\dot{B}^\frac{d}{2}_{2,1}}\mathrm{d}t'}\Big(\|u_2\|_{L^1_t(\dot{B}^{\frac{d}{2}+1}_{2,1})}
\|\delta\tau\|_{L^\infty_t(\dot{B}^0_{d,\infty})}
+\big(1+\|\tau_1\|_{L^\infty_t(\dot{B}^{\frac{d}{2}}_{2,1})}\\
&+\|\tau_2\|_{L^\infty_t(\dot{B}^{\frac{d}{2}}_{2,1})}\big) \|\delta
u\|_{L^1_t(\dot{B}^1_{d,1})}\Big).
\end{split}
\end{equation}
From Proposition \ref{log}, we infer
\begin{equation*}
\|\delta u\|_{L^1_t(\dot{B}^1_{d,1})}\lesssim \|\delta
u\|_{\widetilde{L}^1_t(\dot{B}^1_{d,\infty})} \log\bigg(
e+\frac{\|\delta u\|_{\widetilde{L}^1_t(\dot{B}^{0}_{d,\infty})}+
\|\delta u\|_{\widetilde{L}^1_t(\dot{B}^{2}_{d,\infty})}}{\|\delta
u\|_{\widetilde{L}^1_t (\dot{B}^1_{d,\infty})}}  \bigg).
\end{equation*}
Taking $T$ small enough such that
$\|u_2\|_{L^1_t(\dot{B}^{\frac{d}{2}+1}_{2,1})}$ sufficiently small,
then inserting the above inequality into \eqref{deltatau}, we deduce
\begin{equation}\label{tautotal}
\begin{split}
\|\delta\tau\|_{L^\infty_t(\dot{B}^0_{d,\infty}) }\lesssim C_T
\|\delta u\|_{\widetilde{L}^1_t(\dot{B}^1_{d,\infty})} \log\bigg(
e+\frac{\|\delta u\|_{\widetilde{L}^1_t(\dot{B}^{0}_{d,\infty})}+
\|\delta u\|_{\widetilde{L}^1_t(\dot{B}^{2}_{d,\infty})}}{\|\delta
u\|_{\widetilde{L}^1_t (\dot{B}^1_{d,\infty})}}  \bigg),
\end{split}
\end{equation}
where $C_T\triangleq
\exp{\big({C\|u_1\|_{L^1_T(\dot{B}^{\frac{d}{2}+1}_{2,1})}}\big)}
\big(1+\|\tau_1\|_{L^\infty_T(\dot{B}^{\frac{d}{2}}_{2,1})}
+\|\tau_2\|_{L^\infty_T(\dot{B}^{\frac{d}{2}}_{2,1})}\big)<\infty$.

For the estimate of $\delta u$. By  Proposition \ref{smoothing}, we
get
\begin{equation*}
\|\delta u\|_{L^\infty_t(\dot{B}^{-1}_{d,\infty})}+\|\delta u\|_{\widetilde{L}^1_t(\dot{B}^1_{d,\infty})}\lesssim \|E\|_{\widetilde{L}^1_t(\dot{B}^{-1}_{d,\infty})},
\end{equation*}
where $E=\mu_1\nabla\cdot \delta\tau -u_1\cdot\nabla \delta u-\delta
u\cdot\nabla u_2$. It follows from \eqref{fenbujifen1}, Remark \ref{hua} and Proposition \ref{BONY} that
\begin{equation*}
\begin{split}
\|E\|_{\widetilde{L}^1_t(\dot{B}^{-1}_{d,\infty})} &\lesssim\|\delta
\tau\|_{L^1_t(\dot{B}^0_{d,\infty})}+\|u_1\otimes\delta
u\|_{\widetilde{L}^1_t(\dot{B}^0_{d,\infty})}+\|\delta u\otimes
u_2\|_{\widetilde{L}^1_t(\dot{B}^0_{d,\infty})}\\
&\lesssim\|\delta \tau\|_{L^1_t(\dot{B}^0_{d,\infty})}+\big(\|u_1\|_{\widetilde{L}^2
_t(\dot{B}^1_{d,1})}+ \|u_2\|_{\widetilde{L}^2
_t(\dot{B}^1_{d,1})}\big)\|\delta u\|_{\widetilde{L}^2
_t(\dot{B}^0_{d,\infty})}.
\end{split}
\end{equation*}
Then in light of interpolation Theorem, we infer
\begin{equation}\label{utotal}
\begin{split}
&\|\delta u\|_{L^\infty_t(\dot{B}^{-1}_{d,\infty})}+\|\delta u\|_{\widetilde{L}^1_t(\dot{B}^1_{d,\infty})}\\
&\lesssim (\|u_1\|_{\widetilde{L}^2
_t(\dot{B}^1_{d,1})}+\|u_2\|_{\widetilde{L}^2
_t(\dot{B}^1_{d,1})})(\|\delta
u\|_{L^\infty_t(\dot{B}^{-1}_{d,\infty})}+\|\delta
u\|_{\widetilde{L}^1_t(\dot{B}^1_{d,\infty})})+\|\delta
\tau\|_{L^1_t(\dot{B}^0_{d,\infty})}.
\end{split}
\end{equation}
Recall that
$$u_t-\nu\Delta u=\mu_1\mathbb{P}\nabla\cdot\tau-\mathbb{P}(u\cdot\nabla u).$$
It is easy to verify that
$\mu_1\mathbb{P}\nabla\cdot\tau-\mathbb{P}(u\cdot\nabla u)\in
L^1_T(\dot{B}^0_{d,1})$ for the finite time $T$. Then from
Proposition \ref{smoothing}, we infer that
$u_i\in\widetilde{L}^{\infty}_T(\dot{B}^0_{d,1})\cap\widetilde{L}^{1}_T(\dot{B}^2_{d,1})$
($i=1,2$), therefore $u_i\in\widetilde{L}^2_T(\dot{B}^1_{d,1})$ by
interpolation theorem. Taking $T$ small enough such that
$\|u_i\|_{\widetilde{L}^2_t{\dot{B}^{1}_{d,1}}}$ sufficiently small, then combining
\eqref{tautotal}, \eqref{utotal}, we have
\begin{equation}\label{utautotal}
\begin{split}
\|\delta u\|_{L^\infty_t(\dot{B}^{-1}_{d,\infty})}+&\|\delta u\|_{\widetilde{L}^1_t(\dot{B}^1_{d,\infty})}\\
&\lesssim C_T \int_0^t \|\delta
u\|_{\widetilde{L}^1_{t'}(\dot{B}^1_{d,\infty})} \log\bigg(
e+\frac{\|\delta u\|_{\widetilde{L}^1_{t'}(\dot{B}^{0}_{d,\infty})}+
\|\delta
u\|_{\widetilde{L}^1_{t'}(\dot{B}^{2}_{d,\infty})}}{\|\delta
u\|_{\widetilde{L}^1_{t'} (\dot{B}^1_{d,\infty})}}
\bigg)\mathrm{d}t'.
\end{split}
\end{equation}
Denote $X(t)\triangleq\|\delta u\|_{L^\infty_t(\dot{B}^{-1}_{d,\infty})}+\|\delta u\|_{\widetilde{L}^1_t(\dot{B}^1_{d,\infty})}$, $V(t)\triangleq\|\delta u\|_{\widetilde{L}^1_{t}(\dot{B}^{0}_{d,\infty})}+
\|\delta u\|_{\widetilde{L}^1_{t}(\dot{B}^{2}_{d,\infty})}$, we claim that $V(T)<\infty$. In fact
\begin{equation*}
\begin{split}
\|\delta u\|_{\widetilde{L}^1_{T}(\dot{B}^{0}_{d,\infty})}+
\|\delta u\|_{\widetilde{L}^1_{T}(\dot{B}^{2}_{d,\infty})}&\lesssim \sum_{i=1}^2\int_0^T(\|u_i\|_{\dot{B}^0_{d,1}}+\|u_i\|_{\dot{B}^2_{d,1}})\mathrm{d}t'\\
&\lesssim\sum_{i=1}^2(T\|u_i\|_{L^\infty_T{\dot{B}^{\frac{d}{2}-1}_{2,1}}}
+\|u_i\|_{L^1_T{\dot{B}^{\frac{d}{2}+1}_{2,1}}}).
\end{split}
\end{equation*}
We can rewrite \eqref{utautotal} as follows
\begin{equation}\label{utautotal1}
\begin{split}
X(t)\lesssim C_T\int_0^t X(t')\log\left( e+\frac{V(T)}{X(t')}\right).
\end{split}
\end{equation}
As
$$\int_0^1\frac{\mathrm{d}r}{r\log(e+\frac{V(T)}{r})}=+\infty,$$
Osgood lemma implies $X\equiv0$ on $[0,T]$, whence also
$\delta\tau\equiv0$. Then a continuity argument ensures that $(u_1,\tau_1)=(u_2, \tau_2)$ on $\mathbb{R}^+$.

\section{Appendix}\label{Appendix}

This section is devoted to the estimates of the convection terms
which were used in Section 3.

First, let us give some definitions in paradifferential calculus in
homogeneous spaces. We designate the homogeneous paraproduct of $v$
by $u$ as
$$\dot{T}_uv\triangleq\sum_q\dot{S}_{q-1}\dot{\Delta}_qv.$$
and the homogeneous remainder of $u$ and $v$ as
$$\dot{R}(u,v)\triangleq\sum_{q}\dot{\Delta}_qu\dot{\widetilde{\Delta}}_qv,\ \ \text{and}\ \ \dot{\widetilde{\Delta}}_q=\dot{\Delta}_{q-1}+\dot{\Delta}_{q}+\dot{\Delta}_{q+1}.$$
Formally, we have the following homogeneous Bony decomposition:
$$uv=\dot{T}_uv+\dot{T}_vu+\dot{R}(u,v).$$
The properties of continuity of homogeneous paraproduct and remainder on homogeneous hybrid Besov spaces are described as follows.
\begin{proposition}\label{pro1}
For all $s_1,s_2,t_1,t_2$ such that $s_1\leq\frac{d}{2}$ and $s_2\leq\frac{d}{2}$, the following estimate holds
$$\|T_uv\|_{\dot{B}^{s_1+t_1-\frac{d}{2},s_2+t_2-\frac{d}{2}}}\lesssim\|u\|_{\dot{B}^{s_1,s_2}}\|v\|_{\dot{B}^{t_1,t_2}}.$$
If $\min(s_1+t_1,s_2+t_2)>0$, then
$$\|R(u,v)\|_{\dot{B}^{s_1+t_1-\frac{d}{2},s_2+t_2-\frac{d}{2}}}\lesssim\|u\|_{\dot{B}^{s_1,s_2}}\|v\|_{\dot{B}^{t_1,t_2}}.$$
If $u\in L^{\infty}$,
$$\|T_uv\|_{\dot{B}^{t_1,t_2}}\lesssim\|u\|_{L^{\infty}}\|v\|_{\dot{B}^{t_1,t_2}},$$
and, if $\min(t_1,t_2)>0$, then
$$\|R(u,v)\|_{\dot{B}^{t_1,t_2}}\lesssim\|u\|_{L^{\infty}}\|v\|_{\dot{B}^{t_1,t_2}}.$$
\end{proposition}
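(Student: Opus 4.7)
The plan is to prove each of the four estimates by the standard dyadic approach for Bony's decomposition, but with careful bookkeeping to respect the hybrid norm's splitting at the frequency threshold $j=0$.

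I would start with the paraproduct estimate. Fixing $j\in\mathbb{Z}$, the cancellation properties \eqref{cancellations} give $\dot{\Delta}_j(\dot{T}_uv)=\sum_{|q-j|\le 4}\dot{\Delta}_j(\dot{S}_{q-1}u\,\dot{\Delta}_qv)$, so by H\"older, $\|\dot{\Delta}_j(\dot{T}_uv)\|_{L^2}\lesssim \sum_{|q-j|\le 4}\|\dot{S}_{q-1}u\|_{L^\infty}\|\dot{\Delta}_qv\|_{L^2}$. Applying Lemma \ref{benstein} and the telescoping $\dot{S}_{q-1}u=\sum_{q'\le q-2}\dot{\Delta}_{q'}u$ yields $\|\dot{S}_{q-1}u\|_{L^\infty}\lesssim\sum_{q'\le q-2}2^{q'd/2}\|\dot{\Delta}_{q'}u\|_{L^2}$, and I would split this sum at $q'=0$: the piece $q'\le 0$ is bounded by $2^{q(d/2-s_1)_+}\|u\|_{\dot{B}^{s_1,\cdot}}$ (a geometric sum converges because $s_1\le d/2$), and similarly the piece $q'>0$ gives a factor controlled by $\|u\|_{\dot{B}^{\cdot,s_2}}$ provided $s_2\le d/2$. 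Multiplying by the correct weight ($2^{j(s_1+t_1-d/2)}$ for $j\le 0$, $2^{j(s_2+t_2-d/2)}$ for $j>0$), summing in $j$, and using convolution-of-sequences arguments on the $|q-j|\le 4$ restriction gives the desired bound.

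For the remainder estimate I would use $\dot{\Delta}_j\dot{R}(u,v)=\sum_{q\ge j-N_0}\dot{\Delta}_j(\dot{\Delta}_qu\,\tilde{\dot{\Delta}}_qv)$, and apply Bernstein with the $L^1\hookrightarrow L^2$ gain: $\|\dot{\Delta}_j(\dot{\Delta}_qu\,\tilde{\dot{\Delta}}_qv)\|_{L^2}\lesssim 2^{jd/2}\|\dot{\Delta}_qu\|_{L^2}\|\tilde{\dot{\Delta}}_qv\|_{L^2}$. The condition $\min(s_1+t_1,s_2+t_2)>0$ is exactly what makes the geometric series $\sum_{q\ge j-N_0} 2^{-q(s_i+t_i)}$ converge in each of the low- and high-frequency regimes, after the standard split of the range of $q$ at $q=0$ and matching with $j$'s sign. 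The last two estimates are easier: since $u\in L^\infty$, one simply substitutes $\|\dot{S}_{q-1}u\|_{L^\infty}\le\|u\|_{L^\infty}$ (for the paraproduct) or handles the remainder analogously without decomposing $u$ in frequency.

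The main obstacle is the combinatorial bookkeeping: each appearance of the hybrid norm of $u$ or $v$ requires splitting the relevant dyadic sum at the threshold $0$, producing four subcases (low/high for $u$ $\times$ low/high for $v$) whose target weights on $j$ also depend on whether $j\le 0$ or $j>0$. One must verify that in every subcase the geometric series converges under exactly one of the four hypotheses $s_1\le d/2$, $s_2\le d/2$, $s_1+t_1>0$, $s_2+t_2>0$. The endpoint $s_i=d/2$ in the paraproduct case is the most delicate, as summability there comes only from the $\ell^1$ structure of the hybrid Besov norm rather than from a strict geometric decay; this is handled by Young's inequality on the convolution of the weight sequences.
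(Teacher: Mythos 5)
Your argument is the standard dyadic proof of the continuity of the paraproduct and remainder on hybrid Besov spaces and is correct, including the correct identification of where each hypothesis enters ($s_i\le\frac d2$ to sum $\dot S_{q-1}u$ in $L^\infty$, $s_i+t_i>0$ to sum the remainder over $q\ge j-N_0$, and the $\ell^1$ structure of the norm at the endpoint $s_i=\frac d2$). The paper itself gives no proof of this proposition but defers to Danchin \cite{R.DANCHIN2001}, where precisely this argument is carried out, so your proposal matches the intended proof.
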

\begin{remark}\label{BY}
When $d\geq2$, we have $\|uv\|_{\dot{B}^{\frac{d}{2}-1,\frac{d}{2}}}\lesssim
\|u\|_{\dot{B}^{\frac{d}{2}}_{2,1}}\|v\|_{\dot{B}^{\frac{d}{2}-1,\frac{d}{2}}}$.
\end{remark}

\begin{proposition}\label{pro2}
Let $u$ be a vector with $\nabla\cdot u=0$. Suppose that $-1-\frac{d}{2}<s_1,t_1,s_2,t_2\leq1+\frac{d}{2}$. The following two estimates hold
\begin{eqnarray*}
&&\big|(\dot{\Delta}_j(u\cdot\nabla v),\dot{\Delta}_jv)\big|
\lesssim
c_j2^{-j\psi^{s_1,s_2}(j)}\|u\|_{\dot{B}_{2,1}^{\frac{d}{2}+1}}\|v\|_{\dot{
B}^{s_1,s_2}}\|\dot{\Delta}_jv\|_{L^2},\\
&&
\begin{split}\big|(\dot{\Delta}_j(u\cdot\nabla v),\dot{\Delta}_jw)+&(\dot{\Delta}_j(u\cdot\nabla w),\dot{\Delta}_jv)\big|
\lesssim
c_j\|u\|_{\dot{B}_{2,1}^{\frac{d}{2}+1}}(2^{-j\psi^{s_1,s_2}(j)}\|v\|_
{\dot{B}^{s_1,s_2}}\|\dot{\Delta}_jw\|_{L^2}\\
&+2^{-j\psi^{t_1,t_2}(j)}
\|w\|_{\dot{B}^{t_1,t_2}}\|\dot{\Delta}_jv\|_{L^2}).\end{split}
\end{eqnarray*}
where the function $\psi^{\alpha,\beta}(j)$ define as
$\psi^{\alpha,\beta}(j)=\alpha$ if $j\leq 0$,
$\psi^{\alpha,\beta}(j)=\beta$, if $j>0$, and
$\sum_{j\in\mathbb{Z}}c_j\leq1$.
\end{proposition}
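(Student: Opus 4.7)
The plan is to prove both estimates via a Bony-type decomposition that isolates a cancellation coming from $\nabla\cdot u=0$, leaving only a commutator and lower-order paraproduct pieces to estimate in $L^2$. Writing $u\cdot\nabla v=\sum_k u^k\partial_k v$ and applying the homogeneous Bony decomposition to each summand gives
$$\dot{\Delta}_j(u\cdot\nabla v)=\sum_k\bigl(\dot{\Delta}_j\dot{T}_{u^k}\partial_k v+\dot{\Delta}_j\dot{T}_{\partial_k v}u^k+\dot{\Delta}_j\dot{R}(u^k,\partial_k v)\bigr).$$
In the first piece, spectral localization forces $|q-j|\leq 4$, so I split
$$\dot{\Delta}_j\bigl(\dot{S}_{q-1}u^k\partial_k\dot{\Delta}_q v\bigr)=\bigl[\dot{\Delta}_j,\dot{S}_{q-1}u^k\partial_k\bigr]\dot{\Delta}_q v+\dot{S}_{q-1}u^k\partial_k\dot{\Delta}_j\dot{\Delta}_q v.$$
The non-commutator part, paired against $\dot{\Delta}_j v$, reduces after collapsing via \eqref{cancellations} to $(\dot{S}_{j-1}u\cdot\nabla\dot{\Delta}_j v,\dot{\Delta}_j v)$ plus terms that can be absorbed into the commutator with an $\ell^1$ coefficient; and this leading term vanishes by integration by parts because $\nabla\cdot\dot{S}_{j-1}u=0$.

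For the commutator itself I would use the standard first-order estimate
$$\bigl\|\bigl[\dot{\Delta}_j,\dot{S}_{q-1}u\cdot\nabla\bigr]\dot{\Delta}_q v\bigr\|_{L^2}\lesssim\|\nabla\dot{S}_{q-1}u\|_{L^\infty}\|\dot{\Delta}_q v\|_{L^2},$$
obtained from a Taylor expansion of the convolution kernel $h(2^j\cdot)$. The embedding $\dot{B}^{d/2+1}_{2,1}\hookrightarrow W^{1,\infty}$ yields the factor $\|u\|_{\dot{B}^{d/2+1}_{2,1}}$. The two remaining pieces $\dot{\Delta}_j\dot{T}_{\partial_k v}u^k$ and $\dot{\Delta}_j\dot{R}(u^k,\partial_k v)$ I estimate directly from Proposition \ref{pro1} with regularities shifted by one derivative on $v$: the upper bound $s_i,t_i\leq 1+d/2$ is exactly what makes the paraproduct $\dot{T}_{\partial_k v}u^k$ tractable via the $L^\infty$ bound on $u$, while the lower bound $s_i,t_i>-1-d/2$ secures the positivity $s_i+(d/2+1)>0$ needed for the remainder estimate in Proposition \ref{pro1}. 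Carrying the sums over $q\sim j$ produces a generic $\ell^1$-summable sequence $c_j$, and the weight $2^{-j\psi^{s_1,s_2}(j)}$ emerges from the definition of $\|\cdot\|_{\dot{B}^{s_1,s_2}}$, which uses $2^{qs_1}$ for $q\leq 0$ and $2^{qs_2}$ for $q>0$; hence the regime switch at $j=0$.

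The second, symmetrized inequality follows the same scheme, the only new feature being that after the commutator extraction the leading surviving piece is
$$\bigl(\dot{S}_{j-1}u\cdot\nabla\dot{\Delta}_j v,\dot{\Delta}_j w\bigr)+\bigl(\dot{S}_{j-1}u\cdot\nabla\dot{\Delta}_j w,\dot{\Delta}_j v\bigr),$$
which again vanishes by $\nabla\cdot\dot{S}_{j-1}u=0$ applied to the product $\dot{\Delta}_j v\cdot\dot{\Delta}_j w$; the two remaining contributions then split into one copy of the first-inequality estimate with $v$ in the input role and another with $w$ in the input role, giving the two summands on the right-hand side. The main technical obstacle I expect is not the commutator bound itself but the bookkeeping across $j=0$: the stray terms where $\dot{S}_{q-1}u$ and $\dot{S}_{j-1}u$ disagree for $q\in\{j-1,j+1\}$, and the splitting of the hybrid norm $\|v\|_{\dot{B}^{s_1,s_2}}$ into its low- and high-frequency pieces, must be handled so that a single $c_j$ with $\sum_j c_j\leq 1$ absorbs all cross-regime contributions. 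This is essentially a telescoping/reindexing exercise, but it is where any loss of the hybrid exponent $\psi^{s_1,s_2}(j)$ would hide.
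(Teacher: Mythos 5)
Your proposal is correct and follows essentially the same route as the proof the paper relies on: the paper does not reprove Proposition \ref{pro2} but refers to Danchin's commutator lemma in \cite{R.DANCHIN2001}, whose argument is exactly your Bony decomposition, extraction of the commutator $[\dot{\Delta}_j,\dot{S}_{q-1}u\cdot\nabla]$, cancellation of the leading term via $\nabla\cdot\dot{S}_{j-1}u=0$, and paraproduct/remainder bounds under the stated ranges of $s_i,t_i$ (with the divergence-free condition used once more to write $\sum_k\dot{R}(u^k,\partial_k v)=\sum_k\partial_k\dot{R}(u^k,v)$, which is where your condition $s_i+\frac{d}{2}+1>0$ comes from). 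The only slip is cosmetic: for $\dot{T}_{\partial_k v}u^k$ the $L^\infty$ control is on $\dot{S}_{q-1}\partial_k v$ (this is what uses $s_i,t_i\le 1+\frac d2$), not on $u$.
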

One can refer to \cite{R.DANCHIN2001} for the proof of above two Propositions. Here we only have made a slight modification since the incompressible condition on $u$.

Next, we introduce a useful Proposition to deal with $[\mathbb{P}\mathrm{div},u\cdot\nabla]$ type commutators.
\begin{proposition}\label{Profile}
For any smooth tensor $[\tau^{i,j}]_{d\times d}$ and $d$ dimensional vector $u$, it always holds that
$$\mathbb{P}\nabla\cdot(u\cdot\nabla\tau)=\mathbb{P}(u\cdot\nabla\mathbb{P}\nabla\cdot\tau)+\mathbb{P}(\nabla u\cdot\nabla\tau)-\mathbb{P}(\nabla
u\cdot\nabla\Delta^{-1}\nabla\cdot\nabla\cdot\tau),$$
where the ith component of $\nabla u\cdot\nabla\tau$ is
$$[\nabla u\cdot\nabla \tau]^i=\sum_{j}\partial_j u\cdot\nabla\tau^{i,j},$$
and also
$$[\nabla u\cdot\nabla\Delta^{-1}\nabla\cdot\nabla\cdot\tau]^i
=\partial_iu\cdot\nabla\Delta^{-1}\nabla\cdot\nabla\cdot\tau.$$
\end{proposition}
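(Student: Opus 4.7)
The plan is to verify the identity by straightforward componentwise computation, using only the Leibniz rule, the algebraic structure of the Leray projector $\mathbb{P}=I-\nabla\Delta^{-1}\nabla\cdot$, and the fact that $\mathbb{P}\nabla f=0$ for any scalar $f$. No Littlewood--Paley machinery is required: the statement is purely a pointwise algebraic identity (up to the harmless convention that $\Delta^{-1}$ is defined on the appropriate class of distributions), so the proof reduces to carefully tracking indices.

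First I would expand $\nabla\cdot(u\cdot\nabla\tau)$ by the product rule. Writing $[u\cdot\nabla\tau]^{i,j}=\sum_k u^k\partial_k\tau^{i,j}$, one gets
\begin{equation*}
[\nabla\cdot(u\cdot\nabla\tau)]^i=\sum_{j,k}(\partial_j u^k)(\partial_k\tau^{i,j})+\sum_{j,k}u^k\partial_k\partial_j\tau^{i,j}=[\nabla u\cdot\nabla\tau]^i+[u\cdot\nabla\nabla\cdot\tau]^i,
\end{equation*}
which identifies the second term $\mathbb{P}(\nabla u\cdot\nabla\tau)$ in the claim and reduces the remaining task to rewriting $\mathbb{P}(u\cdot\nabla\nabla\cdot\tau)$.

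Next I would split $\nabla\cdot\tau=\mathbb{P}\nabla\cdot\tau+\nabla\Delta^{-1}\nabla\cdot\nabla\cdot\tau$, the standard Helmholtz decomposition of the vector field $\nabla\cdot\tau$. Substituting gives
\begin{equation*}
u\cdot\nabla\nabla\cdot\tau=u\cdot\nabla(\mathbb{P}\nabla\cdot\tau)+u\cdot\nabla(\nabla f),\qquad f\triangleq\Delta^{-1}\nabla\cdot\nabla\cdot\tau.
\end{equation*}
For the pure gradient piece, the elementary commutation $\sum_k u^k\partial_k\partial_i f=\partial_i(u\cdot\nabla f)-\sum_k(\partial_i u^k)(\partial_k f)$ yields the vector identity $u\cdot\nabla(\nabla f)=\nabla(u\cdot\nabla f)-\bigl[\partial_i u\cdot\nabla f\bigr]_{i}$, and the $i$-th component of the correction is precisely $[\nabla u\cdot\nabla\Delta^{-1}\nabla\cdot\nabla\cdot\tau]^i$ as defined in the statement. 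Applying $\mathbb{P}$ then kills the gradient $\nabla(u\cdot\nabla f)$ and produces the third term of the claim with the correct sign.

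Combining the two displays gives the desired identity. There is no real obstacle: the only point requiring a bit of care is matching the index convention of the nonstandard notation $[\nabla u\cdot\nabla\tau]^i=\sum_j\partial_j u\cdot\nabla\tau^{i,j}$ and the scalar-gradient version $[\nabla u\cdot\nabla\Delta^{-1}\nabla\cdot\nabla\cdot\tau]^i=\partial_i u\cdot\nabla\Delta^{-1}\nabla\cdot\nabla\cdot\tau$, making sure the derivative hitting $u$ falls on the correct index in each of the two Leibniz steps (a free $j$ in the first computation, and the free outer index $i$ in the second). Once the two conventions are correctly identified, the proof is a two-line algebraic manipulation.
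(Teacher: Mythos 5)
Your proof is correct: the Leibniz expansion of $\nabla\cdot(u\cdot\nabla\tau)$, the Helmholtz splitting $\nabla\cdot\tau=\mathbb{P}\nabla\cdot\tau+\nabla\Delta^{-1}\nabla\cdot\nabla\cdot\tau$, and the commutation of $u\cdot\nabla$ past the gradient (whose pure-gradient part is killed by $\mathbb{P}$) together yield exactly the stated identity with the correct index conventions. The paper itself omits the derivation and merely cites the three-dimensional computation in Zhu's work, and your argument is precisely that direct componentwise computation, written out in general dimension.
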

For more detailed derivations, one can refer to the proof of the three dimensions  in \cite{zhuyi}.

\vspace{.5cm}

\noindent\textbf{Acknowledgements.} Q. Chen and X. Hao  were
supported by the National Natural Science Foundation of China
(No.11671045).


\begin{thebibliography}{99}
\bibitem{H.BAHOURI}
H. Bahouri, J.Y. Chemin, R. Danchin, Fourier Analysis and Nonlinear Partial Differential Equations, Springer, Berlin/BerlinHeidelberg, (2011).

\bibitem{R.B.Bird}
R.B. Bird, C.F. Curtiss, R.C. Armstrong, O. Hassager, Dynamics of Polymetric Liquids, vol. 1, Fluid Mechanics, 2nd edn., Wiley, New York, (1987).


\bibitem{Can1}M. Cannone, {Ondelettes, paraproduits et
Navier-Stokes},
Nouveaux essais, Diderot \'{e}diteurs, Paris, (1995).

\bibitem{Can2}M. Cannone, {Harmonic analysis tools for solving the incompressible
                     Navier-Stokes equations},
Handbook of Mathematical fluid Dynamics, Vol. III, North-Holland,
Amsterdam, (2004).

\bibitem{Can3} M. Cannone,
{\it A generalization of a theorem by Kato on Naiver-Stokes
equations}, Revista Matem\"{a}tica Iberoamericana, \textbf{13}
(1997), 515-541.


\bibitem{Can4}
M. Cannone, Y. Meyer and F. Planchon, {\it Solutions autosimilaires
des {\'e}quations de Navier-Stokes}, {S{\'e}minaire ``{\'E}quations
aux D{\'e}riv{\'e}es Partielles" de l'{\'E}cole polytechnique},
Expos{\'e} VIII, 1993--1994.




\bibitem{J.-Y.Chemin}
J.-Y. Chemin, N. Masmoudi, \emph{About lifespan of regular solutions of
equations related to viscoelastic fluids}, SIAM J. Math. Anal., \textbf{33}
(2001), 84-112.



\bibitem{Miao}
Q. Chen, C. Miao, \emph{Global well-posedness of viscoelastic fluids of Oldroyd type in Besov spaces}, Nonlinear Anal., \textbf{68} (2008), 1928-1939.

\bibitem{Chen}
Q. Chen, C. Miao, Z. Zhang, \emph{Global well-posedness for compressible Navier-Stokes equations with highly oscillating initial velocity}, Comm. Pure Appl. Math., \textbf{63} (2010), 1173-1224.


\bibitem{R.DANCHIN2000}
R. Danchin, \emph{Global existence in critical spaces for compressible Navier-Stokes equations}, Invent. Math., \textbf{141} (2000), 579-614.

\bibitem{R.DANCHIN2001}
R. Danchin, \emph{Global existence in critical sapaces for flows of compressible viscous and heat-conductive gases }, Arch. Ration. Mech. Anal., \textbf{160} (2001), 1-39.

\bibitem{R.DANCHIN2005}
R. Danchin, \emph{On the uniqueness in critical spaces for compresssible Navier-Stokes equatons}, Nonlinear Differential Equations Appl., \textbf{12} (2005), 111-128.


\bibitem{T.M.F}
T.M. Elgindi, F. Rousset, \emph{Global regularity for some Oldroyd-B type models}, Comm. Pure Appl. Math., \textbf{68} (2015), 2005-2021.

\bibitem{T.M.J}
T.M. Elgindi, J.L. Liu, \emph{Global wellposeness to the generalized Oldroyd type models in $\mathbb{R}^3$}. J. Differential Equations, \textbf{259} (2015), 1958-1966.


\bibitem{E.Fernandez}
E. Fernandez-Cara, F. Guill\'{e}n, R. R. Ortega, \emph{Existence et unicit\'{e} de solution forte locale en temps pour des fluides non newtoniens de type Oldroyd (version $L^s-L^r$)}, C. R. Acad. Sci. Paris S\'{e}r. I Math., \textbf{319} (1994), 411-416.

\bibitem{R.Z.Zi}
R. Zi, D. Fang, T. Zhang, \emph{Global solution to the incompressible Oldroyd-B type model in the critical $L^p$ framework: the case of the non-small coupling paramrter}, Arch. Ration. Mech. Anal., \textbf{213} (2014), 651-687.

\bibitem{D.Y.Fang}
D. Fang, R. Zi, \emph{Global solutions to the Oldroyd-B model with a class of large initial data}, SIAM J.Math. Anal., \textbf{48} (2016), 1054-1084.

\bibitem{Fujita-Kato}
H. Fujita, T. Kato, \emph{On the Navier-Stokes initial value problem I}, Archive for Rational Mechanics and Analysis, \textbf{16} (1964), 269-315.

\bibitem{C.G1}
C. Guillop\'{e}, J.-C. Saut, \emph{Existence results for the flow of viscoelastic fluids with a differential constitutive law}, Nonlinear Anal., \textbf{15} (1990), 849-869.

\bibitem{C.G2}
C. Guillop\'{e}, J.-C. Saut, \emph{Global existence and one-dimensional nonlinear stability of shearing motions of viscoelastic fluids of Oldroyd type}, RAIRO Mod\'{e}l. Math. Anal.Num\'{e}r., \textbf{24} (1990), 369-401.

\bibitem{P.-L.Lions}
P.-L. Lions, N. Masmoudi, \emph{Global solutions for some Oldroyd models of non-Newtonian flows}, Chin. Ann. Math. Ser. B, \textbf{21} (2000), 131-146.

\bibitem{Z.Lei}
Z. Lei, N. Masmoudi, Y. Zhou, \emph{Remarks on the blowup criteria for Oldroyd models}, J. Differential Equations, \textbf{248} (2010), 328-341.

\bibitem{J.G. Oldroyd}
J.G. Oldroyd, \emph{Non-Newtonian effects in steady motion of some idealized elastico-viscous liquids}, Proc. Roy. Soc. Edinburgh Sect. A, \textbf{245} (1958), 278-297.

\bibitem{zhuyi}
Y. Zhu, \emph{Global small solutions of 3D incompressible Oldroyd-B model without damping mechanism}, Journal of Functional Analysis, \textbf{274} (2017), 2039-2060.


\end{thebibliography}
\end{document}